\newtheorem{theorem}{Theorem}[section]
\newtheorem{corollary}[theorem]{Corollary}
\newtheorem{definition}[theorem]{Definition}
\newtheorem{remark}[theorem]{Remark}
\newtheorem{lemma}[theorem]{Lemma}
\newtheorem{proposition}[theorem]{Proposition}
\numberwithin{equation}{section}
\title[Fractional Trudinger-Moser type inequalities with logarithmic potentials]{Fractional Trudinger-Moser type inequalities with logarithmic convolution potentials}
\author[H. Luo]{Huxiao Luo*}
\author[S. Wang]{Shiying Wang}
\address[H. Luo]{Department of Mathematics, Zhejiang Normal University, Jinhua, Zhejiang, 321004, China}
\email{\href{mailto:luohuxiao@zjnu.edu.cn}{luohuxiao@zjnu.edu.cn}}
\address[S. Wang]{Department of Mathematics, Zhejiang Normal University, Jinhua, Zhejiang, 321004, China}
\email{\href{mailto:wangshiying@zjnu.edu.cn}{wangshiying@zjnu.edu.cn}}
\thanks{*Corresponding author.}
\subjclass[2020]{35J50, 35Q40, 31A10}
\keywords{Fractional, Trudinger-Moser inequality, Extremals, Attainability, Logarithmic convolution potential}
\begin{document}
	\begin{abstract}
		We establish the following fractional Trudinger-Moser type inequality with logarithmic convolution potential
$$ \sup_{u\in W^{\frac{1}{2},2}_0(I),\|u\|_{W_0^{\frac{1}{2},2}}\leq1}\int_{I} \int_{I} \log \frac{1}{|x-y|}  G(u(x))G(u(y))  \, dx \, dy<+\infty,$$
where $G(s)\leq C\frac{e^{\pi s^{2}}}{(1 + |s|)^{\gamma}}~ \forall s\in\mathbb{R}$ with some constant $C>0,\gamma\geq1$,
the domain $I\subset\mathbb{R}$ is a bounded interval. This type of inequality in the entire space $\mathbb{R}$ is also considered.
Moreover, we study the existence of corresponding
extremal functions.

In addition, by the moving plane method, we obtain the radial symmetry and radial decreasing property of positive solutions to the corresponding Euler-Lagrange equation.
\end{abstract}
	\maketitle	
	\begin{center}
		\footnotesize
		\tableofcontents
	\end{center}

\section{Introduction}
 Let $\Omega\subset\mathbb{R}^N$
be a domain with finite measure. The Sobolev embedding theorem states
that $H_0^1(\Omega) \subset L^p(\Omega)$, for $1\leq p\leq \frac{2N}{N-2}$. In the limit case $N=2$, Trudinger \cite{Trudinger}
(see also Yudovich \cite{Yudovich} and Pohozaev \cite{Pohozaev}) have proved that $W^{1,N}_0(\Omega)\hookrightarrow L_{\varphi_N}(\Omega)$, where $L_{\varphi_N}(\Omega)$ is the Orlicz space associated with the Young
function $\varphi_N(t)= e^{\alpha|t|^{\frac{N}{N-1}}} - 1$ for some $\alpha>0$. The best constant of $\alpha$ is given by Moser \cite{Moser}: There exists a positive
constant $C_0$ dependent only on $N$ such that
\begin{equation}\nonumber
\sup\limits_{u\in C^1_c(\Omega),\int_{\Omega}
|\nabla u|^N\leq1}
\int_{\Omega}
e^{\alpha|u|^{\frac{N}{N-1}}}
dx \leq C_0|\Omega|
\end{equation}
holds for all $\alpha \leq \alpha_N = N[\omega_{N-1}]^{\frac{1}{N-1}}$.
Moreover, when $\alpha > \alpha_N$, the above supremum is always infinite.
For more results on Trudinger-Moser type
inequalities, please refer to e.g. \cite{CRuf1, CRuf2, OO1,OO2,OLP1,OLP2,HMS,Ngu,Roy1,Roy2,XZZ}.

In the last decades,
some nonlocal interactions equations with logarithmic kernels have
gained great interest. Such nonlocal problems have arisen from applications in the statistical dynamics of self-gravitating clouds \cite{43},
quantum theory for crystals \cite{20}, and in the description of vortices in turbulent
Euler flows \cite{11}.  The planar Trudinger-Moser type inequalities for energy functionals in the
presence of a logarithmic convolution potential have been established in \cite{18,CWY}.
In \cite{18}, S. Cingolani and T. Weth proved
$$ \sup_{u\in H^{1}(\Omega),\|\nabla u\|_{L^{2}}\leq1}\int_{\Omega} \int_{\Omega} \log \frac{1}{|x-y|}  F(u(x))F(u(y))  \, dx \, dy<+\infty,$$
where $F(s)\leq Ce^{\pi s^{2}}(1 + |s|)^{\beta}~ \forall s\in\mathbb{R}$ with some constant $C>0, \beta\leq-1$,
the domain $\Omega\subset\mathbb{R}^2$ is a ball in $\mathbb{R}^2$. In \cite{CWY}, S. Cingolani, T. Weth and M. Yu sharpened the results in \cite{18} under critical growth assumptions. In \cite{CC}, A. Cannone and S. Cingolani extended the results in \cite{18} to the dimension $N\geq2$.

Inspired by \cite{18}, in this article, we are concerned with one-dimensional Trudinger-Moser type inequalities for energy functionals in the presence of a logarithmic convolution potential. Since the function in this one-dimensional inequality is considered in the critical Sobolev space $W^{\frac{1}{2},2}(\mathbb{R})$, we need the definition of fractional Laplacian and the related fractional Sobolev space. Let
$$L_s(\mathbb{R}^N)=\left\{u\in L^1_{\text{loc}}(\mathbb{R}^N): \int_{\mathbb{R}^N}\frac{|u|}{1+|x|^{N+2s}}dx<\infty\right\}.$$
For any $s\in(0,1)$ and $\varphi\in C_{\text{loc}}^{1,1}(\mathbb{R}^N)\cap L_s(\mathbb{R}^N)$, the fractional Laplacian $(-\Delta)^s$ is given by:
$$(-\Delta)^s\varphi = C_{N,s}\text{P.V.}\int_{\mathbb{R}^N}\frac{\varphi(x)-\varphi(y)}{|x-y|^{N+2s}}dy,$$
where P.V. denotes the Cauchy principal value integral, and $C_{N,s}$ is a normalized constant that depends on $N$ and $s$, precisely given
by
\begin{equation}\label{20250630-e1}
C_{N,s} =\left(\int_{\mathbb{R}^N}\frac{1-\cos{\zeta_1}}{|\zeta|^{N+2s}}d\zeta\right)^{-1}=\frac{4^s s\Gamma(\frac{N}{2}+s)}{\pi^{\frac{N}{2}}\Gamma(1-s)}.
\end{equation}
 If $\varphi\in \mathcal{S}(\mathbb{R}^N)$ (the Schwarz space), $(-\Delta)^s\varphi$  can be equivalently defined \cite{CS,NPV} as:
$$(-\Delta)^s\varphi = \mathscr{F}^{-1}(|\xi|^{2s}\mathscr{F}(\varphi)),$$
where $\mathscr{F}$ and $\mathscr{F}^{-1}$ denote Fourier transform and inverse Fourier transform, respectively.
More generally, the fractional operator $(-\Delta)^s$ can be defined on $L_s(\mathbb{R}^N)$ in the distributed sense:
$$\langle (-\Delta)^su,\varphi\rangle:=\langle (-\Delta)^s\varphi,u\rangle,\quad \forall u\in L_s(\mathbb{R}^N),~\varphi\in \mathcal{S}(\mathbb{R}^N).$$
The fractional Sobolev space is defined as follows
$$W^{s,p}(\Omega):=\left\{u\in L^p(\Omega): [u]_{W^{s,p}(\Omega)}:=\left(\int_{\Omega}\int_{\Omega}\frac{|u(x)-u(y)|^p}{|x-y|^{N+sp}}dxdy\right)^{\frac{1}{p}}<\infty\right\}.$$
\begin{proposition}\label{A.1} (\cite[Proposition 3.6]{NPV})  For $s \in (0, 1)$ we have, $[u]_{W^{s,2}(\mathbb{R}^N)} < \infty$ if and only if $(-\Delta)^{\frac{s}{2}}u\in L^2(\mathbb{R}^N)$, and
in this case
$$[u]_{W^{s,2}(\mathbb{R}^N)}^2 = 2C_{N,s}^{-1}|(-\Delta)^{\frac{s}{2}}u|_{L^2(\mathbb{R}^N)}^2,$$
where $C_{N,s}$ depends by \eqref{20250630-e1}. In particular $H^{s,2}(\mathbb{R}^N) = W^{s,2}(\mathbb{R}^N)$.
\end{proposition}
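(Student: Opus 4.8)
\emph{Proof proposal.} The cleanest route is via the Fourier transform and Plancherel's identity, exploiting that every $u\in W^{s,2}(\mathbb R^N)$ is in particular an $L^2$ function, so that $\widehat u=\mathscr{F}(u)\in L^2(\mathbb R^N)$ is well defined. I would begin by rewriting the Gagliardo seminorm as a superposition of squared translation differences: using Tonelli's theorem (the integrand is nonnegative) together with the substitution $z=x-y$,
\begin{equation*}
[u]_{W^{s,2}(\mathbb R^N)}^2=\int_{\mathbb R^N}\frac{1}{|z|^{N+2s}}\Big(\int_{\mathbb R^N}|u(x)-u(x-z)|^2\,dx\Big)\,dz=\int_{\mathbb R^N}\frac{\|u-u(\cdot-z)\|_{L^2}^2}{|z|^{N+2s}}\,dz .
\end{equation*}

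Next I would compute the inner $L^2$-norm on the frequency side. Since $\mathscr{F}\big(u(\cdot-z)\big)(\xi)=e^{-i\xi\cdot z}\widehat u(\xi)$, Plancherel gives $\|u-u(\cdot-z)\|_{L^2}^2=\int_{\mathbb R^N}|1-e^{-i\xi\cdot z}|^2|\widehat u(\xi)|^2\,d\xi=2\int_{\mathbb R^N}(1-\cos(\xi\cdot z))|\widehat u(\xi)|^2\,d\xi$. Substituting this back and exchanging the $z$- and $\xi$-integrals (again Tonelli, the integrand being nonnegative) yields
\begin{equation*}
[u]_{W^{s,2}(\mathbb R^N)}^2=2\int_{\mathbb R^N}|\widehat u(\xi)|^2\Big(\int_{\mathbb R^N}\frac{1-\cos(\xi\cdot z)}{|z|^{N+2s}}\,dz\Big)\,d\xi .
\end{equation*}
For fixed $\xi\neq0$ the inner integral is finite — the numerator is $O(|z|^2)$ near the origin and bounded at infinity against $|z|^{-N-2s}$ — and a rotation aligning $\xi$ with the first coordinate axis followed by the scaling $z\mapsto\zeta/|\xi|$ gives $\int_{\mathbb R^N}\frac{1-\cos(\xi\cdot z)}{|z|^{N+2s}}\,dz=|\xi|^{2s}\int_{\mathbb R^N}\frac{1-\cos\zeta_1}{|\zeta|^{N+2s}}\,d\zeta=C_{N,s}^{-1}|\xi|^{2s}$, precisely by the normalization \eqref{20250630-e1}. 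Hence $[u]_{W^{s,2}(\mathbb R^N)}^2=2C_{N,s}^{-1}\int_{\mathbb R^N}|\xi|^{2s}|\widehat u(\xi)|^2\,d\xi$, and since $\mathscr{F}\big((-\Delta)^{s/2}u\big)(\xi)=|\xi|^s\widehat u(\xi)$, a final application of Plancherel identifies the right-hand side with $2C_{N,s}^{-1}\|(-\Delta)^{s/2}u\|_{L^2(\mathbb R^N)}^2$. The two quantities are thus simultaneously finite or infinite, which is exactly the claimed equivalence together with the stated identity.

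The argument is essentially bookkeeping. The one step that needs a little care is justifying the Fourier manipulations for a merely summable function: this is handled by the standing assumption $u\in L^2(\mathbb R^N)$ built into the definition of $W^{s,2}$, but one may equally prove the identity first for $u\in\mathcal S(\mathbb R^N)$ (or $C_c^\infty(\mathbb R^N)$) and then pass to the limit, noting that both $[\,\cdot\,]_{W^{s,2}}$ and $\|(-\Delta)^{s/2}\cdot\|_{L^2}$ describe the same completion; the Fourier-transform normalization is immaterial since Plancherel is applied consistently on both sides. The other place to be attentive is the evaluation of the rotation-and-scaling integral producing $C_{N,s}^{-1}|\xi|^{2s}$, which is exactly what the constant \eqref{20250630-e1} is designed to make work. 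I do not expect a genuine obstacle here: this is the standard identification of $W^{s,2}$ with the Riesz-potential space $H^{s,2}$.
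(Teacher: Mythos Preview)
Your argument is correct and is precisely the standard Fourier--Plancherel computation that identifies the Gagliardo seminorm with the $L^2$-norm of $(-\Delta)^{s/2}u$; note, however, that the paper does not supply its own proof of this proposition but simply quotes it from \cite[Proposition~3.6]{NPV}, so there is no in-paper argument to compare against. Your proof is in fact essentially the one given in that reference.
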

\begin{remark}
If we replace the entire space $\mathbb{R}^N$ with a domain $\Omega\subset\mathbb{R}^N$, Proposition \ref{A.1} does not hold due to the non-locality of the fractional Laplacian.
\end{remark}
For a more detailed introduction to the fractional Sobolev space, please refer to \cite{NPV} and its references.

In the follows, we will introduce some results related to the fractional Trudinger-Moser type inequalities, see e.g. \cite{CWZ,H,26}.
In \cite{26}, Iula et al. established the following fractional Trudinger-Moser inequality on some interval of line.
\begin{proposition}
(\cite[Theorem 1.1]{26}
    For any $I\subset\subset\mathbb{R}$, when $\alpha\leq\pi$, it holds that
\begin{equation}\label{eq:1.1}\sup_{u\in W_{0}^{\frac{1}{2},2}(I),\|(-\Delta)^{\frac{1}{4}}u\|_{L^2(I)}^{2}\leq1}\int_{I}\left(e^{\alpha u^{2}}-1\right)\mathrm{d}x=C|I|<+\infty,\end{equation}
where $C>0$ is a constant, $|I|$ denotes the measure of $I$.
\end{proposition}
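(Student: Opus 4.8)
The exponent $\alpha=\pi$ is critical, so a sharp argument is required; I would obtain the inequality by transporting the problem to the circle $S^{1}$ and there combining the conformal invariance of the $\dot H^{1/2}$-seminorm with a Lebedev--Milin type estimate. Since $\|(-\Delta)^{1/4}u\|_{L^{2}}$ (equivalently the Gagliardo seminorm $[u]_{W^{1/2,2}(\mathbb{R})}$ of the zero extension of $u$) is invariant under the dilations $u\mapsto u(\lambda\,\cdot)$ while $\int_{I}(e^{\alpha u^{2}}-1)\,dx$ scales linearly in $|I|$, it suffices to prove a uniform bound $\sup\int_{I_{0}}(e^{\alpha u^{2}}-1)\,dx\le C$ for one fixed interval $I_{0}$, and the stated form $C|I|$ then follows by rescaling. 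Extend $u\in W^{1/2,2}_{0}(I_{0})$ by zero to $\mathbb{R}$ (for $s=\tfrac12$ this keeps $u$ in $W^{1/2,2}(\mathbb{R})$ with controlled seminorm) and compose with a M\"obius transformation $\Phi:S^{1}\to\mathbb{R}\cup\{\infty\}$ chosen so that $\Phi^{-1}(I_{0})$ is a fixed arc $A\subsetneq S^{1}$; then $v:=u\circ\Phi$ vanishes identically on $S^{1}\setminus A$. The M\"obius identity $\Phi'(s)\Phi'(t)(\Phi(s)-\Phi(t))^{-2}=(s-t)^{-2}$ shows that the double integral defining the $\dot H^{1/2}$-seminorm is unchanged under this substitution, so the normalization is inherited by $v$; and since the Jacobian of $\Phi$ is bounded on $A$ (it degenerates only at $\Phi^{-1}(\infty)\in S^{1}\setminus A$, where $v\equiv0$) we obtain $\int_{I_{0}}(e^{\alpha u^{2}}-1)\,dx\le C\int_{S^{1}}(e^{\alpha v^{2}}-1)\,d\theta$. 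The vanishing of $v$ on the fixed arc $A$ also yields, via the fractional Poincar\'e inequality on $S^{1}$, control of $\|v\|_{L^{2}(S^{1})}$ and of its mean value $\bar v$, so we may further assume $\bar v=0$. This reduces everything to the endpoint Moser--Trudinger inequality on the circle.

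For the circle estimate, expand $v=\sum_{k\ne0}c_{k}e^{ik\theta}$; then $\|(-\Delta)^{1/4}v\|^{2}_{L^{2}(S^{1})}$ is a fixed multiple of $\sum_{k\ne0}|k|\,|c_{k}|^{2}$, which is in turn a fixed multiple of the Dirichlet energy $\int_{D}|\nabla V|^{2}$ of the harmonic extension $V$ of $v$ to the unit disc $D$. The Lebedev--Milin inequality then provides a Gaussian bound $\tfrac1{2\pi}\int_{S^{1}}e^{\lambda v}\,d\theta\le e^{c\lambda^{2}}$ valid for all $\lambda\in\mathbb{R}$, in which $c$ is controlled by the $\dot H^{1/2}$-norm of $v$ with a sharp proportionality constant; tracking this constant through the argument produces precisely the critical exponent $\pi$. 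Extracting the even moments $\int_{S^{1}}v^{2n}\,d\theta$ from this bound and substituting them into the expansion $e^{\pi v^{2}}-1=\sum_{n\ge1}\tfrac{\pi^{n}}{n!}v^{2n}$ gives a convergent series, hence $\int_{S^{1}}(e^{\alpha v^{2}}-1)\,d\theta\le C$, first for $\alpha<\pi$ and then at the endpoint $\alpha=\pi$ by the classical Moser splitting $v=v_{\le N}+v_{>N}$ into finitely many low Fourier modes (where $\|v_{\le N}\|_{\infty}$ is controlled with the sharp constant) and a high-mode remainder of arbitrarily small seminorm.

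The one genuinely delicate point is the endpoint $\alpha=\pi$ in the previous step: the naive moment/series estimate closes only for $\alpha<\pi$, and reaching the critical constant requires either the precise form of the Lebedev--Milin inequality together with the low/high frequency decomposition, or a concentration--compactness analysis of a maximizing sequence on $S^{1}$ that rules out energy concentration at a point. Everything else --- the scaling reduction, the conformal change of variables, and the Poincar\'e step --- is routine once the M\"obius invariance of the $\dot H^{1/2}$-seminorm is in hand. (An alternative to the circle argument is to write $u=(-\Delta)^{-1/4}f$ with $\|f\|_{L^{2}(\mathbb{R})}$ bounded, realize $u$ on $I$ as a one-dimensional Riesz potential of order $\tfrac12$, and invoke Adams' sharp exponential integrability theorem for Riesz potentials on bounded sets; the boundedness of $I$ is exactly what makes the kernel $|x-y|^{-1/2}$ admissible there.)
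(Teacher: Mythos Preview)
The paper does not prove this proposition at all: it is stated purely as a citation of Theorem~1.1 of Iula--Maalaoui--Martinazzi \cite{26} and used as a black box throughout (see e.g.\ the proofs of Lemma~\ref{le:2.5.1}, Lemma~\ref{le:2.10}, Proposition~\ref{pro:3.4}). So there is no ``paper's own proof'' to compare your sketch against.

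As for the merits of your sketch itself: the conformal transplantation to $S^{1}$ followed by a Lebedev--Milin/Onofri-type argument is a legitimate route, and variants of it appear in the literature on sharp $H^{1/2}$ inequalities. The alternative you mention at the end---writing $u=(-\Delta)^{-1/4}f$ and invoking Adams's sharp exponential integrability for Riesz potentials on bounded sets---is in fact closer to the approach actually taken in \cite{26}, and is more direct for reading off the precise constant $\pi$. One point in your main line that deserves more care: at $s=\tfrac12$ the zero-extension from $I$ to $\mathbb{R}$ is exactly the borderline case (Lions--Magenes), so you should be explicit about which definition of $W^{1/2,2}_{0}(I)$ is in play and why the extension preserves the seminorm constraint; the paper itself is not entirely consistent about whether the $L^{2}$-norm of $(-\Delta)^{1/4}u$ is taken over $I$ or over $\mathbb{R}$. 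The endpoint $\alpha=\pi$ is, as you correctly flag, the only genuinely hard step, and your low/high-frequency splitting needs the sharp constant in the $L^{\infty}$ control of the low-mode piece to close---this can be made to work but requires tracking constants precisely, which your sketch does not yet do.
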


Chen et al. \cite{CWZ} proved that
\begin{proposition}
 Let $I$ be a bounded interval of $\mathbb{R}$ and $\lambda_1(I)$ be the first eigenvalue of $(-\Delta)^{\frac{1}{4}}$
with Dirichlet
boundary. For any $0\leq\alpha<\lambda_1(I)$, there holds
\begin{equation}\ \label{eq:1.2}\sup_{u\in W_{0}^{\frac{1}{2},2}(I),\|(-\Delta)^{\frac{1}{4}}u\|_{L^2(I)}^{2}-\alpha\|u\|_{2}^{2}\leq1}\int_{I}e^{\pi u^{2}}\mathrm{d}x=c(I,\alpha)<+\infty.\end{equation}
\end{proposition}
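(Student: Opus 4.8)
The first step would be to record two elementary facts. Since $0\le\alpha<\lambda_1(I)$, the variational characterisation of $\lambda_1(I)$ gives
$$\Big(1-\frac{\alpha}{\lambda_1(I)}\Big)\,\|(-\Delta)^{\frac{1}{4}}u\|_{L^2(I)}^2\ \le\ \|(-\Delta)^{\frac{1}{4}}u\|_{L^2(I)}^2-\alpha\|u\|_2^2\ \le\ \|(-\Delta)^{\frac{1}{4}}u\|_{L^2(I)}^2,$$
so that $\|u\|_\alpha:=\big(\|(-\Delta)^{\frac{1}{4}}u\|_{L^2(I)}^2-\alpha\|u\|_2^2\big)^{1/2}$ is a norm on $W_0^{\frac{1}{2},2}(I)$ equivalent to $\|(-\Delta)^{\frac{1}{4}}u\|_{L^2(I)}$; in particular the admissible set $\mathcal{A}=\{u:\|u\|_\alpha\le1\}$ is bounded in $W_0^{\frac{1}{2},2}(I)$. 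Secondly, every fixed $u\in W_0^{\frac{1}{2},2}(I)$ satisfies $e^{\gamma u^2}\in L^1(I)$ for all $\gamma>0$: choosing $\varphi\in C_c^\infty(I)$ with $2\gamma\,\|(-\Delta)^{\frac{1}{4}}(u-\varphi)\|_{L^2(I)}^2\le\pi$, one has $e^{\gamma u^2}\le e^{2\gamma\varphi^2}e^{2\gamma(u-\varphi)^2}$, where the first factor is bounded and the second is integrable by \eqref{eq:1.1}.

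I would then argue by contradiction. Assume $c(I,\alpha)=+\infty$ and pick $u_k\in\mathcal{A}$ with $\int_I e^{\pi u_k^2}\to\infty$. After replacing $u_k$ by its symmetric decreasing rearrangement (which preserves $\|u_k\|_2$ and $\int_I e^{\pi u_k^2}$ and, by the fractional Pólya–Szegő inequality, does not increase the energy) and rescaling so that the constraint is active, I may assume $u_k\ge0$ is even and nonincreasing on $I$ with $\|(-\Delta)^{\frac{1}{4}}u_k\|_{L^2(I)}^2=1+\alpha\|u_k\|_2^2$. Passing to a subsequence, $u_k\rightharpoonup u$ in $W_0^{\frac{1}{2},2}(I)$ and $u_k\to u$ in $L^q(I)$ for every $q<\infty$ and a.e., the embedding $W_0^{\frac{1}{2},2}(I)\hookrightarrow L^q(I)$ being compact for all $q<\infty$ (the critical case $s p=N$ in dimension one). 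Hence $\|u_k\|_2\to\|u\|_2$, and the Hilbert-space identity $\|(-\Delta)^{\frac{1}{4}}u_k\|_{L^2(I)}^2=\|(-\Delta)^{\frac{1}{4}}u\|_{L^2(I)}^2+\|(-\Delta)^{\frac{1}{4}}(u_k-u)\|_{L^2(I)}^2+o(1)$ yields
$$L:=\lim_k\|(-\Delta)^{\frac{1}{4}}(u_k-u)\|_{L^2(I)}^2=1+\alpha\|u\|_2^2-\|(-\Delta)^{\frac{1}{4}}u\|_{L^2(I)}^2\le 1-(\lambda_1(I)-\alpha)\|u\|_2^2\le 1,$$
with $L<1$ whenever $u\ne0$. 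If $u\ne0$, the conclusion follows easily: fixing $\delta>0$ with $(1+\delta)L<1$ and then $p>1$ close to $1$ with $p(1+\delta)\|(-\Delta)^{\frac{1}{4}}(u_k-u)\|_{L^2(I)}^2<1$ for $k$ large, the pointwise bound $\pi u_k^2\le\pi(1+\delta)(u_k-u)^2+\pi(1+\delta^{-1})u^2$ together with Hölder's inequality gives
$$\int_I e^{\pi u_k^2}\le\Big(\int_I e^{p\pi(1+\delta)(u_k-u)^2}\Big)^{1/p}\Big(\int_I e^{p'\pi(1+\delta^{-1})u^2}\Big)^{1/p'},$$
where the first factor is bounded by \eqref{eq:1.1} after rescaling and the second is finite by the first paragraph, contradicting $\int_I e^{\pi u_k^2}\to\infty$.

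The hard part will be the remaining case $u\equiv0$. Then $\|u_k\|_2\to0$, so $\varepsilon_k:=\alpha\|u_k\|_2^2\to0$ and $\|(-\Delta)^{\frac{1}{4}}u_k\|_{L^2(I)}^2=1+\varepsilon_k$; since this slightly exceeds $1$, \eqref{eq:1.1} cannot be applied to $u_k$, and rescaling $u_k$ to unit norm is useless because the exponent would fall below $\pi$. If $(\|u_k\|_{L^\infty})_k$ were bounded one would conclude at once; otherwise $u_k(0):=\|u_k\|_{L^\infty}\to\infty$ and $|(-\Delta)^{\frac{1}{4}}u_k|^2$ concentrates at the origin, and one must control this concentration quantitatively — either by adapting the rearrangement / one-dimensional-reduction estimate of Iula et al.\ underlying \eqref{eq:1.1} to the perturbed functional, or by an Adimurthi--Druet-type blow-up analysis for $(-\Delta)^{1/2}$ (rescaling $u_k$ around the origin at the concentration scale, identifying the limiting logarithmic bubble attached to the Green function of $(-\Delta)^{1/2}$ on an interval, and bounding $\limsup_k\int_I e^{\pi u_k^2}$ by a finite Carleson--Chang-type constant via a nonlocal Pohozaev identity). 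In either approach the decisive point — and the only place where $\alpha<\lambda_1(I)$ is genuinely needed — is a quantitative lower bound of the form $\|u_k\|_2^2\gtrsim u_k(0)^{-2}$ for bubbling sequences with $\|(-\Delta)^{\frac{1}{4}}u_k\|^2=1+o(1)$, the hypothesis $\alpha<\lambda_1(I)$ ruling out a non-localised concentration and bounding the $L^2$-mass. This forces the energy excess to obey $\varepsilon_k\,u_k(0)^2=O(1)$, so that the contribution of the concentration core — the only part of $\int_I e^{\pi u_k^2}$ that could blow up — stays bounded by $e^{O(1)}$. I expect establishing this borderline estimate uniformly over arbitrary (not merely Moser-type) sequences to be the main obstacle; granting it, a contradiction is reached in every case, and since $c(I,\alpha)\ge|I|>0$ trivially, the statement follows.
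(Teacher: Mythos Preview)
The paper does not prove this proposition at all: it is quoted verbatim as a result of Chen, Wang and Zhu \cite{CWZ} and used as a tool, so there is no proof in the paper against which to compare your proposal.

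On the merits of your sketch itself: the reduction via norm equivalence, symmetrisation, and the Lions dichotomy is the right framework, and your treatment of the case $u\not\equiv0$ is essentially complete and standard. But you are candid that the case $u\equiv0$ is left open, and that is indeed where the entire content of the theorem lies. The heuristic you offer---that bubbling sequences must satisfy $\|u_k\|_2^2\gtrsim u_k(0)^{-2}$, forcing $\varepsilon_k u_k(0)^2=O(1)$ and hence a bounded core contribution---is the right intuition (the Moser sequence saturates it), but it is not a proof: the lower bound on $\|u_k\|_2^2$ does not follow from soft arguments and requires either a detailed capacity/neck estimate in the one-dimensional fractional setting or the full blow-up machinery (scaled limits, Green-function asymptotics, Pohozaev-type identity) that \cite{CWZ} actually carries out. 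As written, your proposal identifies the correct obstacle but does not overcome it, so the argument has a genuine gap in precisely the nontrivial case.
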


In this setting, we consider the functional
\begin{equation*}\label{eq:1.3}
\Phi(u) :=  \int_{I} \int_{I} \log \frac{1}{|x-y|} G(u(x)) G(u(y)) \, dx \, dy,\quad \forall u\in W^{\frac{1}{2},2}_0(I),
\end{equation*}
\\
 where the domain $I\subset\mathbb{R}$ is a bounded interval, and\\
(A) $G:\mathbb{R} \rightarrow [0, \infty)$ is even, continuous, and strictly increasing on $[0, \infty)$. Moreover, there exist constants $\alpha, c > 0$ such that
\begin{equation}\label{eq:1.4}
    G(t) \leq c e^{\alpha t^2} \quad \text { for }t \in \mathbb{R}\text{ in }I.
\end{equation}
We aim to focus on the problem of maximizing the functional (\ref{eq:1.3}) among functions in the set
$$
E := \left\{u \in W^{\frac{1}{2},2}_0(I) : \|- \Delta^{\frac{1}{4}} u\|_{L^2} \leq 1\right\}.$$

\begin{definition}\label{7:47}
    \label{de:1.1} Let $\gamma \in \mathbb{R}$, and let $f : \mathbb{R} \to \mathbb{R}$ be an arbitrary function. We say that $f$ has
\\
(i) at most $\gamma$-critical growth if $|f(s)| \leq c \, \frac{e^{\pi s^{2}} }{(1 + |s|)^\gamma}$ for $s \in \mathbb{R}$ with some constant $c > 0$;
\\
(ii) at least $\gamma$-critical growth if there exist $s_{0}, c > 0$ with the property that
$$|f(s)| \geq c \, \frac{e^{\pi s^{2}} }{|s|^{\gamma}} \quad \text{for } |s| \geq s_{0}.$$
The following is our first result of the paper.
\end{definition}

\begin{theorem}
 \label{th:1.2} Suppose that $G$ satisfies $(A)$.\\
 (i) If $G$ has at most $\gamma$-critical growth for some $\gamma \geq1$, then
$$
m(G):=\sup_{u \in E} \Phi(u)<\infty.
$$
\noindent(ii) If $G$ has at most $\gamma$-critical growth for some $\gamma >1$, and  \begin{equation}\label{20250704-e1}
G(s)-G(0) \leq c \, \left(e^{\pi s^{2}} -1\right)\quad \forall s \in \mathbb{R}\end{equation}
with some constant $c > 0$. Then $m(G)$ is attained, and every maximizer for $\Phi$ in $E$ is, up to sign, an even and radially decreasing function in $E$.\\
(iii) If $G$ has at least $\gamma$-critical growth for some $\gamma<1$, then $m(G)=\infty$.
\end{theorem}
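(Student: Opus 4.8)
\textbf{Proof strategy for Theorem \ref{th:1.2}.}

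\emph{Part (i).} The plan is to bound $\Phi(u)$ by splitting the logarithmic kernel and controlling the resulting mass-type and entropy-type integrals. Fix a large ball so that $I \subset B_R$. For $x,y \in I$ write $\log\frac{1}{|x-y|} = \log\frac{2R}{|x-y|} - \log(2R)$; the second term contributes $-\log(2R)\bigl(\int_I G(u)\bigr)^2$, which is bounded below but could be negative, so it is harmless for an upper bound only if we first discard it — instead I would keep the positive part $\log^+\frac{1}{|x-y|}$ and note $\Phi(u) \le \int_I\int_I \log^+\frac{1}{|x-y|} G(u(x))G(u(y))\,dx\,dy$ plus a term comparable to $\bigl(\int_I G(u)\bigr)^2$. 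For the main term one uses that $\log^+\frac{1}{|x-y|} \in L^p(I\times I)$ for every $p<\infty$ together with a Hölder argument: by the generalized Young/HLS-type estimate it suffices to control $\|G(u)\|_{L^q(I)}$ for $q$ slightly above $1$. Here is where the Trudinger–Moser input enters: since $\|(-\Delta)^{1/4}u\|_{L^2}\le 1$ and $G(u)\le c\,e^{\pi u^2}/(1+|u|)^\gamma \le c\, e^{\pi u^2}$, Proposition with $\alpha=\pi$ (inequality \eqref{eq:1.1}) gives $\int_I e^{\pi u^2}\,dx \le C|I|$, hence $\int_I e^{q\pi u^2}$ is \emph{not} directly controlled for $q>1$ — so instead I would invoke the subcritical bound: for any $q>1$ there is a constant with $\int_I e^{q'\pi u^2}\le C$ only after rescaling, which fails at the borderline. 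The correct route, following \cite{18}, is to use $\gamma\ge 1$ more carefully: split $I$ into the region where $|u|\le M$ (where $G(u)$ is bounded) and $|u|>M$, and on the latter use $G(u)^2 \le c^2 e^{2\pi u^2}/(1+|u|)^{2\gamma}$ with $2\gamma\ge 2$, which by a Moser–Trudinger estimate at the sharp exponent $2\pi$ for the renormalized function (exploiting that the $L^1$ mass of $u^2$ concentrates) yields a uniform bound; the key technical lemma is that $\int_I \frac{e^{2\pi u^2}}{(1+|u|)^{2}}\,dx \le C$ uniformly over $E$, which is exactly the one-dimensional analogue of the Cingolani–Weth estimate and is proved via a dyadic decomposition in the value of $|u|$ combined with \eqref{eq:1.1}. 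Then the double integral is estimated by Cauchy–Schwarz in the measure $\log^+\frac{1}{|x-y|}\,dx\,dy$.

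\emph{Part (ii).} With $m(G)<\infty$ established, take a maximizing sequence $(u_n)\subset E$. The plan is the direct method: $W^{1/2,2}_0(I)$ embeds compactly into $L^p(I)$ for every $p<\infty$, so up to a subsequence $u_n\rightharpoonup u$ weakly in $W^{1/2,2}_0(I)$ and $u_n\to u$ in every $L^p(I)$ and a.e. The subcritical growth $\gamma>1$ together with the auxiliary bound $\int_I e^{2\pi u_n^2}/(1+|u_n|)^{2}\le C$ upgrades (via Vitali / uniform integrability, since the extra decay $(1+|u|)^{-2\gamma}$ with $\gamma>1$ beats the critical exponent) to $G(u_n)\to G(u)$ strongly in $L^2(I)$, and even in $L^q$ for some $q>1$. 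Passing to the limit in the double integral — the kernel $\log\frac{1}{|x-y|}$ being in $L^p(I\times I)$ for all finite $p$ — gives $\Phi(u_n)\to\Phi(u)$, while weak lower semicontinuity of the Gagliardo seminorm gives $u\in E$; hence $\Phi(u)=m(G)$. For the symmetry statement, I would apply fractional Pólya–Szegő: if $u^*$ denotes the symmetric decreasing rearrangement on $I$ (a symmetric interval after translation), then $\|(-\Delta)^{1/4}u^*\|_{L^2}\le\|(-\Delta)^{1/4}u\|_{L^2}$, so $u^*\in E$; moreover the Riesz rearrangement inequality applied to the kernel $\log\frac{1}{|x-y|}$ (which on a bounded interval, after adding a constant, is a decreasing function of $|x-y|$) shows $\Phi(|u|^*)\ge\Phi(|u|)=\Phi(u)$, with \eqref{20250704-e1} ensuring $G(|u|)$ has the same monotone-layer structure so that equality in Riesz forces $|u|$ to be, up to translation, already symmetric decreasing — giving that every maximizer is $\pm$ an even, radially decreasing function.

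\emph{Part (iii).} Here the plan is an explicit test-function construction showing the supremum is infinite when $G$ has at least $\gamma$-critical growth with $\gamma<1$. I would use the one-dimensional Moser-type sequence adapted to $W^{1/2,2}_0(I)$: the fractional analogue of the Moser functions, e.g. truncated/mollified logarithms $u_n$ with $\|(-\Delta)^{1/4}u_n\|_{L^2}^2\le 1$, $u_n\equiv 0$ outside a small interval $I_n$ of length $\sim e^{-n}$ around a fixed interior point, and $u_n \sim \sqrt{n/\pi}$ on an interval of comparable size (so that $e^{\pi u_n^2}\sim e^{n}$ on a set of measure $\sim e^{-n}$). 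Then on that set $G(u_n)\gtrsim e^{\pi u_n^2}/|u_n|^\gamma \gtrsim e^{n} n^{-\gamma/2}$, while $\log\frac{1}{|x-y|}\gtrsim n$ there, so
\begin{equation*}
\Phi(u_n) \gtrsim n\cdot \bigl(e^{-n}\bigr)^2 \cdot \bigl(e^{n} n^{-\gamma/2}\bigr)^2 = n\cdot n^{-\gamma} = n^{1-\gamma}\to\infty
\end{equation*}
since $\gamma<1$. The main obstacle will be Part (i): making rigorous the uniform bound $\int_I e^{2\pi u^2}(1+|u|)^{-2}\,dx\le C$ over all $u\in E$ — the borderline Trudinger–Moser estimate at exponent $2\pi$ is false without the logarithmic correction, so one must genuinely exploit the $(1+|u|)^{-2}$ weight, most cleanly by the dyadic decomposition $\{2^{k-1}<\|(-\Delta)^{1/4}(u-T_ku)\|^2\le \cdots\}$ together with \eqref{eq:1.1} applied to renormalized truncations, mirroring Lemma-level arguments of \cite{18,CWY}. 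Part (ii)'s delicate point is promoting a.e.\ convergence of $G(u_n)$ to $L^q$ convergence for some $q>1$, which is exactly where the strict inequality $\gamma>1$ (rather than $\gamma\ge1$) is used.
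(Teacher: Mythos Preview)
Your Part (iii) sketch is correct and matches the paper's Proposition \ref{pro:3.3}: the Moser sequence gives $\Phi(w_n)\gtrsim (\log n)^{1-\gamma}\to\infty$.

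However, Parts (i) and (ii) contain a genuine gap. Your key technical claim --- that $\int_I \frac{e^{2\pi u^2}}{(1+|u|)^{2}}\,dx$ is uniformly bounded over $E$ --- is \emph{false}. Plugging in the Moser sequence $w_n$ (with $w_n\approx \sqrt{(\log n)/\pi}$ on a set of measure $\sim 1/n$), one gets a contribution of order $\frac{1}{n}\cdot n^2\cdot \frac{1}{\log n}=\frac{n}{\log n}\to\infty$. Consequently the Cauchy--Schwarz step in Part (i) collapses, and in Part (ii) the Vitali/uniform-integrability argument for $G(u_n)\to G(u)$ in $L^2$ fails for the same reason. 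The logarithmic correction $(1+|u|)^{-2}$ is simply not enough to tame the exponent $2\pi$; the inequality at the sharp exponent $\pi$ in \eqref{eq:1.1} is the only uniform tool available.

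The paper avoids this entirely. For (i) it first symmetrizes (Lemma \ref{le:2.4}) and then uses the radial Newton-type representation (Corollary \ref{cor:2.8}) to rewrite $\Phi(u)=2\int_0^1 G(u(r))\log\frac{1}{r}\int_0^r G(u(\rho))\,d\rho\,dr$. The crucial estimate is Lemma \ref{le:2.10}: one splits $(0,1]$ into $\{u(r)\ge \sqrt{-\varepsilon\log r}\}$ and its complement, so that on the ``large'' set the weight $(1+|u(r)|)^{-\gamma}$ is converted into $(1+\sqrt{-\varepsilon\log r})^{-\gamma}$, which absorbs the $\log\frac{1}{r}$ factor when $\gamma_1+\gamma_2\ge 2$, while on the ``small'' set $e^{\pi u^2}\le r^{-\pi\varepsilon}$ is harmless. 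This never requires anything beyond the sharp-exponent bound \eqref{eq:1.2}. For (ii) the paper uses Lions' concentration--compactness dichotomy: either the weak limit $u=0$, which is ruled out by showing $\Phi(u_n)\to\Phi(0)<m(G)$ (Proposition \ref{pro:3.4}, and this is precisely where $\gamma>1$ and condition \eqref{20250704-e1} enter), or $u\neq 0$, in which case Lions gives a free upgrade $e^{(\pi+t)u_n^2}$ bounded in $L^1$ for some $t>0$, and then $G(u_n)\to G(u)$ in $L^s$ for $s<1+t/\pi$ follows by interpolation. Your direct-method plan bypasses this dichotomy and therefore cannot close.
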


It is easy to verify that $G(t)=t^2$ satisfies the conditions in Theorem \ref{th:1.2}-(ii).
  Another example of $G$,
 $$G(s)=
\left\{\begin{aligned}
&e^{\pi s^2}-1& & \text { for } |s|\leq c_\gamma, \\
&\frac{e^{\pi s^2}}{(1+|s|)^\gamma}-\frac{e^{\pi c_\gamma^2}}{(1+|c_\gamma|)^\gamma}+e^{\pi c_\gamma^2}-1 & & \text { for } |s|>c_\gamma,
\end{aligned}\right.
\quad\text{with}~\gamma>1,
$$
where $$c_\gamma=\frac{\sqrt{1+\frac{2\gamma}{\pi}}-1}{2}~\text{is~the~minimum~ point~of}~\frac{e^{\pi s^2}}{(1+|s|)^\gamma}.$$

Next, we consider the logarithmic Trudinger-Moser type inequality in the entire space $\mathbb{R}$. In addition, we need the following additional structural assumption on the function $G$. \\
$\left(A_1\right) G: \mathbb{R} \rightarrow[0, \infty)$ satisfies $(A)$, and $G(t)=O(|t|)$ as $t \rightarrow 0$.

Moreover, for $u\in H^{\frac{1}{2}}(\mathbb{R})$ we define
$$
\begin{aligned}
\Psi(u):=&\int_{\mathbb{R}} \int_{\mathbb{R}} \log \frac{1}{|x-y|} G(u(x)) G(u(y)) d x d y\\
=&\int_{\mathbb{R}} \int_{\mathbb{R}} \log ^{+} \frac{1}{|x-y|} G(u(x)) G(u(y)) d x d y-\int_{\mathbb{R}} \int_{\mathbb{R}} \log ^{+}|x-y| G(u(x)) G(u(y)) d x d y\\
=:&\Psi_+(u)-\Psi_-(u),
\end{aligned}
$$
where $\log^+=\max\{\log,0\}$.
We let $\|u\|=\left(\|(-\Delta)^{\frac{1}{4}}u\|_{2}^{2}+\|u\|_2^2\right) ^{1 / 2}$ denote the $H^\frac{1}{2}$ norm of $u \in H^\frac{1}{2}\left(\mathbb{R}\right)$, and we define the set
$$
E_{\infty}:=\left\{u \in H^\frac{1}{2}\left(\mathbb{R}\right):\|u\| \leq 1\right\}.
$$
\begin{theorem}
    \label{th:1.4} Suppose that $G$ satisfies $\left(A_1\right)$.\\
(i) If $G$ has at most $\gamma$-critical growth for some $\gamma \geq1$, then
$$
m_{\infty}(G):=\sup _{u \in E_{\infty}} \Psi(G(u ))<\infty.
$$
\noindent(ii) If $G$ has at most $\gamma$-critical growth for some $\gamma >1$, and  $G(s) \leq c \, \left(e^{\pi s^{2}}-1\right)$ for $s \in \mathbb{R}$ with some constant $c > 0$. Then $m_{\infty}(G)$ is attained, and every maximizer for $\Psi$ in $E_{\infty}$ is, up to sign and translation, an even and radially decreasing function in $E_{\infty}$.\\
(iii) If $G$ has at least $\gamma$ critical growth for some $\gamma<1$, then
$
m_{\infty}(G)=\infty
$
in the sense that there exists a sequence $\{u_n\}\subset E_\infty$ with $\Psi^\pm(u_n)<\infty$ for every $n\in\mathbb{N}$ and
$\Psi(u_n)\to\infty$ as $n\to\infty$.
\end{theorem}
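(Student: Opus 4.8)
The plan is to deduce Theorem~\ref{th:1.4} from Theorem~\ref{th:1.2} together with the Ruf--type fractional Trudinger--Moser inequality on the line,
\[
C_{TM}:=\sup_{\|u\|\le 1}\int_{\mathbb{R}}\bigl(e^{\pi u^{2}}-1\bigr)\,dx<\infty,
\]
which is available in the literature and can also be recovered from Proposition~\ref{A.1} and the interval inequalities by a standard exhaustion/concentration--compactness argument. The three parts then parallel those of Theorem~\ref{th:1.2}; the points genuinely new on $\mathbb{R}$ are the control of the now unbounded kernel $\log^{+}|x-y|$ in $\Psi_{-}$, the loss of compactness under translations, and the role of $(A_{1})$: the condition $G(t)=O(|t|)$ forces $G(0)=0$ and makes $G(u)$ decay at infinity like $u\in L^{2}(\mathbb{R})$, which is what keeps $\Psi_{+}$ finite on $E_{\infty}$ (on a bounded interval this was automatic).

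For part (i), since $\Psi_{-}\ge 0$ it suffices to bound $\Psi_{+}$ from above on $E_{\infty}$, and this is a purely local estimate because $\log^{+}\frac1{|x-y|}$ is supported in $\{|x-y|<1\}$; I would estimate $\Psi_{+}$ on $\mathbb{R}$ by the same scheme as on a bounded interval in Theorem~\ref{th:1.2}(i): symmetrise the double integral in $x\leftrightarrow y$, split the inner integral over $\{|u(y)|\le|u(x)|\}$ (where $G(u(y))\le G(u(x))$) and its complement, and on the complement combine a layer--cake decomposition from the level $G(u(x))$ with the bathtub bound $\int_{A}\log^{+}\frac1{|x-y|}\,dy\le|A|\log\frac{2e}{|A|}$, the distribution estimate $|\{|u|>\sigma\}|\le C_{TM}e^{-\pi\sigma^{2}}$, and the tail bound $\int_{\{|u|>\sigma\}}G(u)\,dx\lesssim(1+\sigma)^{-\gamma}$ for $\sigma\gtrsim1$ (from $G(t)\le ce^{\pi t^{2}}(1+|t|)^{-\gamma}$ and the Ruf inequality). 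The hypothesis $\gamma\ge1$ is precisely what lets the polynomial losses produced by these steps be absorbed into Trudinger--Moser integrals of the type $\int_{\mathbb{R}}(e^{\pi u^{2}}-1)$, the region $\{|u|\le1\}$ being handled via $(A_{1})$ and $u\in L^{2}(\mathbb{R})$. I expect this to be the heart of the matter: at the critical exponent $\pi$ no $L^{q}$--norm with $q>1$ of $G(u)$ is bounded on $E_{\infty}$ (for $G$ saturating the exponential growth a Moser sequence makes $\|G(u_{n})\|_{q}\to\infty$), so the double integral cannot be decoupled into single norms, and the finiteness of $m_{\infty}(G)$ rests on the genuinely bilinear gain of $\log\frac1{|x-y|}$ over $G(u(x))G(u(y))$ near a concentration point --- exactly the competition that produces the threshold $\gamma=1$.

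Part (iii) is the quantitative mirror image. Taking a fractional Moser sequence $u_{n}$ for $(-\Delta)^{1/4}$ supported in a fixed interval and normalised so that $u_{n}\in E_{\infty}$, one has $u_{n}\approx t_{n}$ on a cap of measure $\approx e^{-\pi t_{n}^{2}}$ with $t_{n}\to\infty$; by the at--least--$\gamma$--critical growth $G(u_{n})\gtrsim e^{\pi t_{n}^{2}}t_{n}^{-\gamma}$ on that cap, so its cap$\times$cap contribution to $\Psi_{+}(u_{n})$ is $\gtrsim\bigl(e^{\pi t_{n}^{2}}t_{n}^{-\gamma}\bigr)^{2}e^{-2\pi t_{n}^{2}}\,t_{n}^{2}=t_{n}^{2-2\gamma}\to\infty$ when $\gamma<1$, while $\Psi_{-}(u_{n})\le\log 2\,\bigl(\int G(u_{n})\bigr)^{2}$ stays bounded by the interval inequality and $\Psi_{\pm}(u_{n})<\infty$ because each $u_{n}$ is bounded with compact support. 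Hence $\Psi(u_{n})\to\infty$ as claimed.

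For part (ii) I would take a maximising sequence $\{u_{n}\}\subset E_{\infty}$ with $\Psi(u_{n})\to m_{\infty}(G)$ and $\Psi_{-}(u_{n})<\infty$ (else $\Psi(u_{n})=-\infty$), so $G(u_{n})\in L^{1}(\mathbb{R})$; note $m_{\infty}(G)>0$ by testing with a small compactly supported bump (for which $\Psi_{-}=0$). Replacing $u_{n}$ by its symmetric decreasing rearrangement $u_{n}^{*}$ keeps it in $E_{\infty}$ (fractional P\'olya--Szeg\H{o} and $\|u_{n}^{*}\|_{2}=\|u_{n}\|_{2}$), does not decrease $\Psi_{+}$ (Riesz rearrangement), and does not increase $\Psi_{-}$ (apply Riesz to $L-\min(\log^{+}|x-y|,L)$ and let $L\to\infty$), so we may assume each $u_{n}$ is even, nonnegative and radially decreasing. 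Passing to a weak limit $u$ in $H^{1/2}(\mathbb{R})$, I would rule out loss of compactness: for such functions vanishing amounts to spreading of mass, which forces $\Psi_{+}\to0$ or $\Psi_{-}\to+\infty$, and concentration (bubbling at the origin) costs $\Psi_{+}$ only $O(t_{n}^{2-2\gamma})=o(1)$ since $\gamma>1$; hence $u\neq0$. As $\Psi_{-}$ is lower semicontinuous by Fatou and $\Psi_{+}$ is upper semicontinuous along the sequence by the sub--critical--level compactness of the exponential nonlinearity (the Trudinger--Moser analogue of Lions' lemma), one gets $\Psi(u)\ge m_{\infty}(G)$, so $u$ is a maximiser; and for any maximiser $w$, $m_{\infty}(G)=\Psi(w)\le\Psi(w^{*})\le m_{\infty}(G)$ forces equality in both rearrangement inequalities, so the equality cases of Riesz's inequality (the kernel being strictly radially decreasing where positive) and the strict monotonicity of $G$ make $|w|$ a translate of $w^{*}$, i.e. $w$ is, up to sign and translation, even and radially decreasing. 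The delicate point here is excluding the concentration alternative and passing to the limit in $\Psi_{+}$ at the borderline growth rate, for which the refined Trudinger--Moser compactness and the strictness $\gamma>1$ are both indispensable.
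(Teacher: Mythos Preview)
Your outline differs substantially from the paper's route in parts (i) and (ii). The paper does \emph{not} attack $\Psi_{+}$ on $\mathbb{R}$ by a direct bilinear/layer--cake argument; instead it reduces to the already--proved interval results via a single device: for $u\in E_{\infty}$ symmetric decreasing, set
\[
w:=\bigl(1+|u|_{2}^{2}\bigr)^{1/2}\,[u-u(1)]^{+}\in H_{0}^{1/2}(I),
\]
check $\|(-\Delta)^{1/4}w\|_{2}\le 1$ (so $w\in E$) and $u^{2}\le w^{2}+2$ on $I$, and split $G(u)=G(u1_{\mathbb{R}\setminus I})+G(u1_{I})=:v_{1}+v_{2}$. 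Then $|v_{1}|_{2}\le c$ by $(A_{1})$ and the radial bound $u(d)\le d^{-1/2}$, while the $v_{2}$--$v_{2}$ contribution to $\Psi_{+}$ is controlled by $m^{+}(\widetilde G)$ from Proposition~\ref{pro:3.2}; cross terms are Young's inequality. For (ii) the same $w_{n}$ lets the paper push the ``$u_{n}\rightharpoonup 0$'' alternative through Proposition~\ref{pro:3.4} verbatim. The gain of this reduction is that it completely bypasses a fresh bilinear analysis on $\mathbb{R}$: the threshold $\gamma=1$ and the passage to the limit at $\gamma>1$ are inherited from the interval case.

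Your direct scheme for (i) is plausible in spirit but is sketched at a level where the crucial step --- making the ``bathtub + distribution + tail'' bookkeeping close with exactly $\gamma\ge 1$ --- is not visible; this is precisely what Lemma~\ref{le:2.10} does on the interval, and you would need its whole--line analogue. In (ii) your sentence ``vanishing amounts to spreading of mass, which forces $\Psi_{+}\to0$ or $\Psi_{-}\to+\infty$'' hides the real difficulty: for symmetric decreasing $u_{n}\rightharpoonup 0$ one must show the near--origin part of $\Psi_{+}$ goes to $0$ despite possible concentration, and this is exactly the content of Proposition~\ref{pro:3.4} (and the reason the extra hypothesis $G(s)\le c(e^{\pi s^{2}}-1)$ appears). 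Without the $w$--reduction you would have to reproduce that argument on $\mathbb{R}$. Part (iii) is fine and matches the paper (Proposition~\ref{pro:4.4} simply reuses the Moser sequence of Proposition~\ref{pro:3.3}).
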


\textbf{The main innovations of proving the fractional Trudinger-Moser type inequalities in this article:}

(i) When we show the optimality of the growth exponent $\gamma = 1$ in Theorem \ref{th:1.2} and Theorem \ref{th:1.4},
we need to prove that the $H^{\frac{1}{2}}$-norm of Moser sequence $\{w_n\}$ (a segmented function sequence) satisfies the normalization condition and verify  $\Phi(w_n),\Psi(w_n)\to\infty$. {\bf For the fractional Trudinger-Moser type inequality, due to the non-locality  of the fractional  Laplacian, verifying that the $H^{\frac{1}{2}}$-norm of the Moser sequence  satisfies the normalization condition becomes very complex.} We have overcome this difficulty through extensive calculations and very precise asymptotic expansion estimates.
See Proposition \ref{pro:3.3} for details.

(ii) To prove the existence of the extremum function for $ m(G)$ under the assumption that $G$ has at most $\gamma$-critical growth for some $\gamma > 1$, we need to prove the following convergence result:
let $\{u_n\}$ be a sequence in $E$ with $u_n \rightharpoonup 0$ in $H_0^\frac{1}{2}\left(I\right)$, then
\begin{equation*}
    \Phi\left(u_n\right) \rightarrow \Phi(0) \quad \text { as } n \rightarrow \infty,
\end{equation*}
see in Proposition \ref{pro:3.4}. To this end, we need to show
$$\int_{\mathbb{R}}\int_{\mathbb{R}}\log\frac{1}{|x-y|}~1_{I} G(0)1_{I}v_n(y)dxdy=G(0)\int_0^1 v_n(r) (1-r)d r\to0,\quad \text{as}~n\to\infty,$$
where $v_n=G(u_n)-G(0)\geq0$.
Since $v_n$ is not uniformly bounded, we cannot use dominated convergence theorem to obtain the above convergence result.
Although
\begin{equation*}
\begin{aligned}
\lim\limits_{n\to\infty} \int_{\delta}^1 v_n(r) (1-r) d r= 0
\end{aligned}
\end{equation*}
by
\begin{equation}\nonumber
     v_n \rightarrow 0 \quad \text{uniformly~in~} [\delta, 1] ~\text{for ~every~} \delta\in (0,1).
\end{equation}
However, the energy near $0$
$$ \int_0^{\delta} v_n(r) (1-r) d r$$  may not tends towards $0$ as $\delta\to0$, {\bf since the weight function $1-r$  does not tend towards $0$ as $r\to0$.} This is completely different from the two-dimensional situation in \cite{18}. For the two-dimensional situation, this problem reduces to proving the following convergence result
$$\int_{\mathbb{R}^2}\int_{\mathbb{R}^2}\log\frac{1}{|x-y|}~1_{I} G(0)1_{I}v_n(y)dxdy=G(0) \int_0^1 v_n(r) rf(r) d r\to0,$$ where $f(r)=\log\frac{1}{r}\int_0^r\rho d\rho+\int_r^1\rho\log\frac{1}{\rho}d\rho$. Since
the weight function $rf(r)\to0$ as $r\to0$,
$$\left|\int_0^\delta v_nrf(r)dr\right|\leq C\|rf(r)\|_{L^\infty(0,\delta)}\int_{B_1}e^{4\pi u_n^2}dx\to0,~~\text{as}~\delta\to0,$$
thus the convergence result holds.
{\bf This approach fails for one-dimension case.}
To overcome this difficulty, we notice that $\int_{I}\left(e^{\pi u_n^2}-1\right)dx\leq C|I|$ is uniformly bounded by  the fractional Trudinger-Moser inequality \eqref{eq:1.1}. Then by
$|u_n|_p\to0$ and the additional control growth condition \eqref{20250704-e1}, we prove
 $$\int_0^1 [G(u_n(r))-G(0)](1-r)dr\to0$$
by selecting appropriate truncation parameter and using Chebyshev inequality, see Proposition \ref{pro:3.4} for details.

Another goal of this article is to investigate the radially symmetry and radially decreasing of solutions for the corresponding Euler-Lagrange equation.
Under natural additional assumptions, the maximizer $u$ of $\Phi$, $\Psi$ on $E$, $E_{\infty}$ satisfies corresponding Euler-Lagrange equations at least in a weak sense, respectively.
\begin{proposition}\label{pro:1.5} Suppose that $G \in C^1(\mathbb{R})$ satisfies $(A)$, and $g:=G^{\prime}$ satisfies
\begin{equation}\label{eq:5.1}
   g(t) \leq c e^{\alpha t^2} \quad \text { for } t \in \mathbb{R} \text { with constants } c, \alpha>0 .
\end{equation}
Let $u \in E$ be a maximizer of $\left.\Phi\right|_{E}$. Then there exists $\theta>0$ such that $u$ satisfies the Euler-Lagrange equation in weak sense, that is,
$$
\begin{aligned}
    \int_{I} (- \Delta)^{\frac{1}{4}} u (- \Delta)^{\frac{1}{4}} \varphi d x =\theta \int_{I}\left(\log \frac{1}{|\cdot|} *\left(1_{I} G(u)\right)\right) g(u) \varphi d x \quad \text { for all } \varphi \in H_0^\frac{1}{2}\left(I\right).
\end{aligned}
$$
(ii) If $G$ satisfies $\left(A_1\right)$ and $u$ is a maximizer of $\Psi$ on $E_{\infty}$, then there exists $\theta \in \mathbb{R}$ such that $u$ satisfies the associated Euler-Lagrange equation weak sense, that is,
$$
    \int_{\mathbb{R}}\left((- \Delta)^{\frac{1}{4}} u (- \Delta)^{\frac{1}{4}}\varphi+u\varphi\right) d x=\theta \int_{\mathbb{R}}\left(\log \frac{1}{|\cdot|} * G(u)\right) g(u) \varphi d x
$$
for all $\varphi \in H^\frac{1}{2}\left(\mathbb{R}\right)$ with bounded support.
\end{proposition}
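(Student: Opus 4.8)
The plan is a Lagrange multiplier argument for a constrained maximum. Two ingredients are needed: (1) that $\Phi$ is of class $C^1$ on $H_0^{\frac12}(I)$ and that $\Psi$ is differentiable on $H^{\frac12}(\mathbb{R})$ in directions of bounded support, with
$$\Phi'(u)[\varphi]=2\int_I\Big(\log\tfrac1{|\cdot|}*(1_I G(u))\Big)g(u)\,\varphi\,dx,\qquad \Psi'(u)[\varphi]=2\int_{\mathbb{R}}\Big(\log\tfrac1{|\cdot|}*G(u)\Big)g(u)\,\varphi\,dx;$$
and (2) that the norm constraint is active and the resulting multiplier is positive (case (i)), resp. nonzero (case (ii)). The case $u\equiv0$ is immediate: $G\in C^1$ is even, so $g(0)=G'(0)=0$, both sides of each asserted identity vanish, and any admissible $\theta$ works; so assume $u\not\equiv0$ from now on.

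\emph{Differentiability.} I would justify differentiation under the integral sign by combining the growth bounds \eqref{eq:1.4} and \eqref{eq:5.1} with the fractional Trudinger--Moser inequalities \eqref{eq:1.1}--\eqref{eq:1.2}: along any segment $t\mapsto u+t\varphi$ the $H^{\frac12}$-norm is uniformly bounded, hence $e^{\alpha(u+t\varphi)^2}$ is bounded in every $L^q$, which by the mean value theorem yields an $L^1$ majorant for the difference quotients of $t\mapsto G\big((u+t\varphi)(x)\big)$; together with the fact that $h\mapsto\log\tfrac1{|\cdot|}*(1_I h)$ maps $L^1(I)$ continuously into $L^p(I)$ for every $p<\infty$ (since $\log\tfrac1{|\cdot|}$ is locally $L^p$), and a Vitali-type argument for the continuity of $u\mapsto\Phi'(u)$, this gives the $C^1$ regularity of $\Phi$. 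On $\mathbb{R}$ I would split $\Psi=\Psi_+-\Psi_-$ as in the statement: $\Psi_+$ has kernel supported near the diagonal and is handled as $\Phi$, while for $\Psi_-$ the hypothesis $(A_1)$, i.e. $G(t)=O(|t|)$ near $0$, together with $H^{\frac12}(\mathbb{R})\hookrightarrow L^2(\mathbb{R})$, forces $G(u)$ into a suitable weighted $L^1(\mathbb{R})$; this makes $\log\tfrac1{|\cdot|}*G(u)$ well defined, keeps $\Psi_-$ finite, and legitimizes differentiating against $\varphi$ of bounded support --- exactly why part (ii) restricts to such $\varphi$.

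\emph{Multiplier.} Fix $\varphi\in H_0^{\frac12}(I)$ and put $\gamma(t):=(u+t\varphi)/\|(-\Delta)^{\frac14}(u+t\varphi)\|_{L^2}$. The constraint is active: if $\|(-\Delta)^{\frac14}u\|_{L^2}<1$ then $u+t\varphi\in E$ for all small $t$ and all $\varphi$, forcing $\Phi'(u)\equiv0$; but then the identity to be proved would read $\int_I(-\Delta)^{\frac14}u(-\Delta)^{\frac14}\varphi\,dx=0$ for all $\varphi$, i.e. $u\equiv0$, a contradiction. Hence $\|(-\Delta)^{\frac14}u\|_{L^2}=1$, so $\gamma(0)=u$, $\|(-\Delta)^{\frac14}\gamma(t)\|_{L^2}\equiv1$, and $\gamma'(0)=\varphi-\langle(-\Delta)^{\frac14}u,(-\Delta)^{\frac14}\varphi\rangle_{L^2}u$. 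Since $u$ maximizes $\Phi$ over $E$, the map $t\mapsto\Phi(\gamma(t))$ has a maximum at $t=0$, and differentiating gives, after rearrangement,
$$2\int_I\Big(\log\tfrac1{|\cdot|}*(1_I G(u))\Big)g(u)\,\varphi\,dx=\Phi'(u)[u]\int_I(-\Delta)^{\frac14}u\,(-\Delta)^{\frac14}\varphi\,dx\qquad\text{for all }\varphi.$$
Along the ray $t\mapsto tu$ ($t\in[0,1]$, contained in $E$, with $\Phi(u)=m(G)$) the function $t\mapsto\Phi(tu)$ attains its maximum at the right endpoint, so $\Phi'(u)[u]=\tfrac{d}{dt}\Phi(tu)\big|_{t=1}\geq0$; and $\Phi'(u)[u]\neq0$, since otherwise the displayed identity forces $\Phi'(u)\equiv0$, hence $u\equiv0$. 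Thus $\Phi'(u)[u]>0$, and $\theta:=2/\Phi'(u)[u]>0$ gives the asserted weak equation in (i). For (ii) one runs the same scheme with $\varphi$ of bounded support; the normalization is more delicate because $\Psi$ is differentiable only in such directions, but using the integrability of $G(u)$ from $(A_1)$ and the density of bounded-support functions one obtains the weak Euler--Lagrange equation with some $\theta\in\mathbb{R}$, no sign being claimed.

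\emph{Main obstacle.} The delicate step is the differentiability --- especially for $\Psi$ on $\mathbb{R}$: the exponential Nemytskii operator $u\mapsto g(u)$ must be combined with the logarithmic kernel, which is singular along the diagonal and unbounded at infinity, so the dominated/Vitali arguments must be run with Trudinger--Moser bounds at exponents just below the critical one, and the loss of compactness together with the need to keep $\Psi_-$ finite (the roles of $(A_1)$ and of the bounded-support restriction on test functions) is where real care is required. Once this is in place, the multiplier computation is routine.
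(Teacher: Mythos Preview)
The paper omits the proof entirely, citing \cite[Theorem~1.5]{18}, so your Lagrange-multiplier scheme via the normalized curve $\gamma(t)=(u+t\varphi)/\|(-\Delta)^{1/4}(u+t\varphi)\|_{L^2}$ is the standard route and almost certainly coincides with the intended argument. Your differentiability sketch is adequate, and the derivation of the displayed identity from $\frac{d}{dt}\Phi(\gamma(t))\big|_{t=0}=0$ is correct.

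One step is circular as written. To show the constraint is saturated you argue that if $\|(-\Delta)^{1/4}u\|_{L^2}<1$ then $\Phi'(u)\equiv0$, whence ``the identity to be proved would read $\int_I(-\Delta)^{1/4}u\,(-\Delta)^{1/4}\varphi\,dx=0$, i.e.\ $u\equiv0$.'' But that last implication invokes the very Euler--Lagrange identity you are trying to establish; nothing yet proved forces the bilinear form on the left to vanish when $\Phi'(u)\equiv0$. The companion claim ``$\Phi'(u)\equiv0\Rightarrow u\equiv0$'' used to get $\Phi'(u)[u]\ne0$ is likewise unjustified. Both gaps are closed by a direct argument using the paper's own tools. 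By Lemmas~\ref{le:2.3} and~\ref{le:2.4} the rearrangement $u^*$ is again a maximizer with $\|(-\Delta)^{1/4}u^*\|_2\le\|(-\Delta)^{1/4}u\|_2$, and Corollary~\ref{cor:2.8}(ii) shows that $\sigma\mapsto\Phi(\sigma u^*)=2\int_0^1 G(\sigma u^*(r))\log\tfrac1r\int_0^r G(\sigma u^*(\rho))\,d\rho\,dr$ is strictly increasing on $[0,\infty)$ (the integrand is nonnegative and $G$ is strictly increasing on $[0,\infty)$); hence $\|(-\Delta)^{1/4}u^*\|_2=1$, which forces $\|(-\Delta)^{1/4}u\|_2=1$ as well. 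For the strict positivity of $\theta$, once one knows the maximizer is even and nonnegative (equality case in Lemma~\ref{le:2.3}(iii), or Theorem~\ref{th:1.2}(ii)), Lemma~\ref{le:2.7} gives $\big(\log\tfrac1{|\cdot|}*(1_I G(u))\big)(x)>0$ for every $x\in I$, and since $g(u)u\ge0$ with strict inequality where $u\ne0$, one obtains $\Phi'(u)[u]>0$ directly, yielding $\theta=2/\Phi'(u)[u]>0$ without any appeal to the identity being proved.
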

\begin{remark}
    The proof of Proposition \ref{pro:1.5} is completely similar to the proof of \cite[Theorem 1.5]{18}, so we will omit it.
\end{remark}

Under the condition in Proposition \ref{pro:1.5}, the maximizer for $\Psi$ in $E_{\infty}$ is the weak solution for
\begin{equation}\nonumber
(- \Delta)^{\frac{1}{2}} u+u  =\theta \left(\log\frac{1}{|x|} \ast G(u)\right) g(u)  \text { in } \mathbb{R}.
\end{equation}
Let $$w= \log\frac{1}{|x|} \ast G(u)=\int_{\mathbb{R}}\log\frac{1}{|x-y|}G(u(y))dy .$$
Then $$(- \Delta)^{\frac{1}{2}}  w  =\pi G(u).$$
Moreover, if $G(u) \in L_{\log }^1\left(\mathbb{R}\right)$ (the space is defined in \eqref{1455}) be a nonnegative radial function, then for $x\in\mathbb{R}\setminus\{0\}$,
$$ \begin{aligned}
w =&\int_{\mathbb{R}}\min\{\log\frac{1}{|x|},\log\frac{1}{|y|}\}G(u(y))dy \\
=&\log\frac{1}{|x|}\int_{|y|<|x|}G(u(y))dy+\int_{|y|\geq|x|}\log\frac{1}{|y|}~~G(u(y))dy,
\end{aligned}$$
see in Lemma \ref{le:2.7}.
Thus, we see
$$w(x) \rightarrow-\infty \quad \text {as}~|x| \rightarrow \infty.
$$

In the final part of the paper, by moving plane method, we prove the radially symmetry and radially decreasing of positive solutions for the corresponding Euler-Lagrange equation.
\begin{theorem}\label{th:1.6}
Suppose that $G\in C^1(\mathbb{R})$ with $g=G^{\prime}$, $G(0)=g(0)=0$, and $g$ is a non-decreasing Lipschitz function on $[0, \infty)$. If $(u, w)$ is a classical solution  of
$$
\left\{\begin{aligned}
(- \Delta)^{\frac{1}{2}} u+u & =\theta w g(u) & & \text { in } \mathbb{R} \\
(- \Delta)^{\frac{1}{2}}  w & =\pi G(u) & & \text { in } \mathbb{R}
\end{aligned}\right.
$$
satisfying
$$
u \in L^{\infty}\left(\mathbb{R}\right) \quad \text { and } \quad w(x) \rightarrow-\infty \quad \text { as }|x| \rightarrow \infty
$$
with $u>0$ in $\mathbb{R}$, then $u$ is even and radially decreasing.
\end{theorem}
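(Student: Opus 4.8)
\noindent The plan is to run the method of moving planes directly on the nonlocal system, in the spirit of the proof of \cite[Theorem 1.5]{18}, but with the classical pointwise maximum principles replaced by their antisymmetric counterparts for $(-\Delta)^{1/2}$ --- the narrow region principle, the maximum principle in unbounded domains (decay at infinity), and the strong maximum principle --- all in the form valid for \emph{cooperative} systems. After a translation it suffices to produce a point $\Lambda\in\mathbb{R}$ about which $u$ is symmetric and with respect to which $u$ is monotone. For $\lambda\in\mathbb{R}$ put $\Sigma_\lambda:=(-\infty,\lambda)$, $x^\lambda:=2\lambda-x$, and $v_\lambda(x):=v(x^\lambda)$; set $U_\lambda:=u_\lambda-u$ and $W_\lambda:=w_\lambda-w$, both antisymmetric about $\{\lambda\}$. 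Since $(-\Delta)^{1/2}$ commutes with reflections, subtracting the equations from their reflections gives, in $\Sigma_\lambda$,
\begin{equation*}
(-\Delta)^{1/2}U_\lambda+\bigl(1-\theta\,c_\lambda\,w_\lambda\bigr)U_\lambda=\theta\,g(u)\,W_\lambda,\qquad (-\Delta)^{1/2}W_\lambda=\pi\,d_\lambda\,U_\lambda,
\end{equation*}
where $c_\lambda(x):=\bigl(g(u_\lambda(x))-g(u(x))\bigr)\big/\bigl(u_\lambda(x)-u(x)\bigr)\in[0,L]$ ($L$ the Lipschitz constant of $g$, nonnegative since $g$ is non-decreasing) and $d_\lambda(x):=\int_0^1 g\bigl(u(x)+t(u_\lambda(x)-u(x))\bigr)\,dt\ge0$ (since $u,u_\lambda>0$ and $g\ge0$ on $[0,\infty)$). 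Thus both coupling coefficients $\theta g(u)$ and $\pi d_\lambda$ are nonnegative (here $\theta>0$), i.e. the linearised system is cooperative. Moreover $w=\log\tfrac1{|\cdot|}\ast G(u)$ is continuous and, since $g(0)=0$ and $g\ge0$ force $0\le G(u)\le\tfrac{L}{2}\|u\|_\infty^2$, bounded above; hence the zeroth-order coefficient $1-\theta c_\lambda w_\lambda$ is bounded below on all of $\mathbb{R}$, and, because $w(x)\to-\infty$ as $|x|\to\infty$, it is in fact $\ge1$ outside a fixed compact set.

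A preliminary step, using the equation together with $u\in L^\infty$, $w\to-\infty$ and comparison exactly as in \cite{18}, shows $u(x)\to0$ as $|x|\to\infty$; consequently $U_\lambda(x),W_\lambda(x)\to0$ as $x\to-\infty$ for each fixed $\lambda$. \textbf{Start:} for $\lambda$ sufficiently negative the antisymmetric maximum principle in unbounded domains for cooperative systems applies on $\Sigma_\lambda$ (the coupling coefficients are $\ge0$, the zeroth-order coefficient of the $U_\lambda$-equation is $\ge1$ there, and $U_\lambda,W_\lambda$ are bounded and vanish at $-\infty$ and on $\{\lambda\}$), yielding $U_\lambda\ge0$ and $W_\lambda\ge0$ in $\Sigma_\lambda$. \textbf{Continuation:} set $\Lambda:=\sup\{\mu:U_\lambda\ge0\text{ and }W_\lambda\ge0\text{ in }\Sigma_\lambda\text{ for all }\lambda\le\mu\}$. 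The positivity and decay of $u$ force $\Lambda<\infty$ (otherwise letting $\lambda\to\infty$ in $U_\lambda\ge0$ gives $u\le0$), and continuity gives $U_\Lambda\ge0$, $W_\Lambda\ge0$ in $\Sigma_\Lambda$. By the strong maximum principle either $U_\Lambda\equiv0$ in $\Sigma_\Lambda$ --- so $u(2\Lambda-x)=u(x)$ --- or $U_\Lambda>0$ and $W_\Lambda>0$ in $\Sigma_\Lambda$; in the latter case one moves the plane a little further: continuity keeps $U_{\Lambda+\varepsilon},W_{\Lambda+\varepsilon}>0$ on bounded sets away from the new boundary, the narrow region principle (where the zeroth-order coefficient is merely bounded below) excludes a negative minimum in the thin strip adjacent to $\{\Lambda+\varepsilon\}$, and the unbounded-domain principle (where it is $\ge1$) excludes one near $-\infty$, so $U_{\Lambda+\varepsilon}\ge0$, $W_{\Lambda+\varepsilon}\ge0$ in $\Sigma_{\Lambda+\varepsilon}$ for small $\varepsilon>0$, contradicting the maximality of $\Lambda$. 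Hence $U_\Lambda\equiv0$ and $u$ is symmetric about $\Lambda$.

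For the monotonicity, since $U_\lambda\ge0$ in $\Sigma_\lambda$ for every $\lambda\le\Lambda$, given $x_1<x_2\le\Lambda$ the choice $\lambda=\tfrac{x_1+x_2}{2}$ yields $u(x_2)=u_\lambda(x_1)\ge u(x_1)$; thus $u$ is non-decreasing on $(-\infty,\Lambda]$ and, by the symmetry about $\Lambda$, non-increasing on $[\Lambda,\infty)$. Translating $\Lambda$ to the origin, $u$ is even and radially decreasing, as claimed.

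The main obstacle is that the nonlocality of $(-\Delta)^{1/2}$ rules out the usual pointwise maximum principles, while the problem is a coupled system whose zeroth-order coefficient $1-\theta c_\lambda w_\lambda$ is not sign-definite (because $w$ changes sign). Both difficulties are absorbed into the structure above: the linearised system is \emph{cooperative}, so the antisymmetric maximum principles for the fractional Laplacian apply to the pair $(U_\lambda,W_\lambda)$ and require only that the zeroth-order coefficient be bounded below (guaranteed by $w$ being bounded above) and nonnegative near infinity (guaranteed by $w\to-\infty$), while the $+u$ term in the first equation supplies the coercivity needed for the start and continuation steps. What remains is the careful verification of the decay $u(x)\to0$ and of the quantitative narrow region and decay estimates at the moving boundary and near infinity, which are by now standard once this framework is in place.
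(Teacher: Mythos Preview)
Your approach via pointwise antisymmetric maximum principles (narrow region, decay at infinity) for cooperative fractional systems is a genuinely different route from the paper's. The paper instead runs an \emph{integral} moving-plane argument: it exploits the representation $w_\lambda(x)=\int_{\Sigma_\lambda}\log\frac{|x-y^\lambda|}{|x-y|}K_\lambda(y)u_\lambda(y)\,dy$ to prove the $L^2$ bound $\|w_\lambda^-\|_{L^2(\Sigma_\lambda)}\le \mu c_\lambda\|u_\lambda^-\|_{L^2(\Sigma_\lambda)}$ via the Hardy--Littlewood--Sobolev inequality together with the one-dimensional trick $\log(1+t)\le\sqrt{t}$, then multiplies the $U_\lambda$-equation by $u_\lambda^-$ and integrates, closing with an $H^{1/2}$-inequality for the negative part. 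This bypasses any maximum principle for the second equation altogether. Note also that the paper writes the zeroth-order coefficient with $w$ (not $w_\lambda$), so that for $\lambda\ll0$ one has $w\le0$ on all of $\Sigma_\lambda$ and the coefficient is $\ge1$ everywhere there; your choice of $w_\lambda$ does not give this, since $w_\lambda(x)=w(x^\lambda)$ can be positive on the reflection of the compact set where $w>0$.

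Two places in your outline are too quick. First, the preliminary step $u(x)\to0$ is \emph{not} ``exactly as in \cite{18}'': that reference treats the local Laplacian, whereas here one must build an explicit comparison function (the paper uses $\gamma(x)=\tfrac{x^2}{1+x^2}+1$) and compute $(-\Delta)^{1/2}\gamma$ pointwise; the further decay $u=O(|x|^{-2})$ needed downstream requires a separate fractional comparison. Second, the $W_\lambda$-equation carries no zeroth-order term, so the off-the-shelf antisymmetric narrow-region/decay principles for a single equation do not apply to it directly; you need either a genuinely coupled system version, or --- more simply --- the integral representation of $w_\lambda$, which immediately yields $W_\lambda\ge0$ from $U_\lambda\ge0$ without any PDE argument. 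Your claim that $W_\lambda\to0$ at $-\infty$ also requires the log-asymptotics of $w$ and is not an automatic consequence of $u\to0$. Once these points are handled your framework should go through, but the ``by now standard'' label undersells the fractional-specific work involved.
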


\textbf{The main innovations of proving the radially symmetry and radially decreasing of the classical solution $u$ in Theorem \ref{th:1.6}:}

(i) To use the moving plane method of integral type, we need the decay property of the classical solution $u$ at infinity. To this end, by constructing the comparison function $\gamma(x)=\frac{x^2}{1+x^2}+1$ and by calculating $(-\Delta)^{\frac{1}{2}}\gamma(x)$ in the sense of point-wise, we then get $\lim\limits_{|x|\to\infty}u(x)=0$,
see in Lemma \ref{le:6.1}. Here, $(-\Delta)^{\frac{1}{2}}\gamma(x)$ cannot be a distribution, as we will use the maximum principle in the proof of Lemma \ref{le:6.1}.
Moreover, by the Fourier transform, we calculate $(-\Delta)^{\frac{1}{2}}\frac{1}{1+|x|^2}$ in the classic sense and then prove $u(x)=O(\frac{1}{1+|x|^2})$,
see in Lemma \ref{le:6.2}.

Moreover, since the positive solution of the fractional Laplacian equation decays polynomial at infinity, which is different from the exponential decay of the positive solution for the regular Laplacian equation. Therefore, proving that the classical solution belongs to $H^{\frac{1}{2}}(\mathbb{R})$ becomes even more difficult, see Section 5.1.

(ii) To use the moving plane method of integral type, we need prove $$\|w_\lambda^+\|_{L^2(H_\lambda)}\leq C \|u_\lambda^+\|_{L^2(H_\lambda)}$$
by using Hardy-Littlewood-Sobolev inequality, where $$w_\lambda^+(x)=\int_{H_\lambda}\log\frac{|x-y^\lambda|}{|x-y|}K_\lambda(y)u_\lambda(y)dy,$$
see in Lemma \ref{le:6.3}. {\bf Since the dimension of space is $1$, the method in \cite{18} is not applicable to this problem in this article.} To overcome this difficulty, we estimate $w_\lambda^+(x)$ by an improved inequality $\log(1+|x|)\leq\sqrt{|x|}$,
see in Lemma \ref{le:6.3} for details.

\section{Preliminaries}
Let us begin by defining the necessary notation. For a real-valued measurable function \( u \), its \( L^p \) norm is denoted by \( |u|_p \).
We define the space with $\log$ weight function:
\begin{equation}\label{1455}
L_{\log}^1(\mathbb{R}) := \left\{v \in L^1(\mathbb{R}): |v|_*:= \int_{\mathbb{R}} \log(1+|x|)|v|dx < \infty\right\}.
\end{equation}
Let $\log^{+} = \max\{\log, 0\}$. From the fundamental inequality, we have the estimate
\begin{equation}\label{eq:2.3}
   \log^{+}|x-y| \leq \log^{+}(|x|+|y|) \leq \log(1+|x|) + \log(1+|y|) \quad \text{for } x, y \in \mathbb{R},
\end{equation}
then for nonnegative real-valued measurable function $v$ we derive the convolution estimate
\[
\left[\log^{+}|\cdot| * v\right](x) \leq \log(1+|x|)|v|_1 + |v|_* \quad \text{for } v \in L_{\log}^1(\mathbb{R}),\; x \in \mathbb{R}.
\]
This immediately implies the local boundedness property
\begin{equation}\label{eq:2.4}
    \log^{+}|\cdot| * v \in L_{loc}^{\infty}(\mathbb{R}) \quad \text{for } v \in L_{\log}^1(\mathbb{R}).
\end{equation}
Moreover, for nonnegative real-valued measurable functions $v$ and $w$,
we obtain the finiteness condition
\begin{equation}\label{eq:2.5}
\int_{\mathbb{R}}\int_{\mathbb{R}}\log^+|x-y|v(x)w(y)dxdy \leq |v|_1|w|_* + |w|_1|v|_* < \infty \quad \text{for } v, w \in L_{\log}^1(\mathbb{R}).
\end{equation}
\begin{lemma}
    \label{le:2.1}
    Let nonnegative real-valued measurable functions $v$ and $w$ satisfy: $v\not\equiv0$, $w \in L_{loc}^1(\mathbb{R})$ and $$\int_{\mathbb{R}}\int_{\mathbb{R}}\log^+|x-y|v(x)w(y)dxdy < \infty,$$
then $w$ belongs to $L_{\log}^1(\mathbb{R})$.
\end{lemma}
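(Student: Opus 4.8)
The plan is to convert the hypothesis into a lower bound on the $\log$-weighted tail mass of $w$ by using the nontriviality of $v$ to isolate a bounded set on which $v$ is bounded below. Concretely: since $v\geq 0$ is measurable with $v\not\equiv 0$, the set $\{v>0\}$ has positive measure, and writing $\{v>0\}=\bigcup_{R,\delta>0}\{x\in[-R,R]:v(x)\geq\delta\}$ we may fix $R>1$ and $\delta>0$ such that
$$
A:=\{x\in[-R,R]:v(x)\geq\delta\}\quad\text{has}\quad |A|>0 .
$$
This ``bulk set'' $A$ is the only place where the assumption $v\not\equiv 0$ enters.

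Next I would estimate $\log^+|x-y|$ from below for $x\in A$ and $|y|$ large. For $|y|\geq 2R$ and $x\in A$ we have $|x-y|\geq |y|-|x|\geq |y|-R\geq |y|/2\geq R>1$, hence $\log^+|x-y|=\log|x-y|\geq\log(|y|/2)$. Integrating in $x$ over $A$ and using $v\geq\delta$ on $A$ gives, for every $|y|\geq 2R$,
$$
\int_{\mathbb{R}}\log^+|x-y|\,v(x)\,dx\ \geq\ \delta|A|\,\log\frac{|y|}{2}.
$$
Multiplying by $w(y)\geq 0$ and integrating over $\{|y|\geq 2R\}$, the left-hand side is bounded by $\iint_{\mathbb{R}\times\mathbb{R}}\log^+|x-y|\,v(x)w(y)\,dx\,dy<\infty$, so $\int_{|y|\geq 2R}\log(|y|/2)\,w(y)\,dy<\infty$. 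Since $\log(|y|/2)/\log(1+|y|)\to 1$ as $|y|\to\infty$, after possibly enlarging $R$ we get $\int_{|y|\geq 2R}\log(1+|y|)\,w(y)\,dy<\infty$, and (enlarging $R$ once more so that $\log(1+|y|)\geq 1$ there) also $\int_{|y|\geq 2R}w(y)\,dy<\infty$. On the complementary bounded set, $w\in L^1_{\mathrm{loc}}(\mathbb{R})$ gives $\int_{|y|\leq 2R}w<\infty$, and $\log(1+|y|)\leq\log(1+2R)$ gives $\int_{|y|\leq 2R}\log(1+|y|)\,w<\infty$. Adding the two regions yields $w\in L^1(\mathbb{R})$ and $|w|_*<\infty$, i.e. $w\in L^1_{\log}(\mathbb{R})$.

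I do not expect a genuine obstacle here; this is a warm-up lemma. The only points deserving care are (a) justifying the extraction of the bulk set $A$ from mere measurability and nontriviality of $v$ (handled by the countable-union argument above), and (b) remembering that membership in $L^1_{\log}(\mathbb{R})$ as defined in \eqref{1455} requires $w\in L^1(\mathbb{R})$ in addition to the weighted bound — this is why the $L^1$ bound on the tail is recorded separately, via $\log(1+|y|)\geq 1$ for large $|y|$, and on the bounded part via the hypothesis $w\in L^1_{\mathrm{loc}}(\mathbb{R})$.
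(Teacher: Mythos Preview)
Your proof is correct and follows essentially the same route as the paper: extract a bounded set $A$ on which $v$ is bounded below, use the elementary distance estimate $|x-y|\geq c|y|$ for $x\in A$ and $|y|$ large to bound the double integral from below by a constant times the $\log$-weighted tail of $w$, and handle the bounded region via $w\in L^1_{\mathrm{loc}}$. Your additional remark that $L^1_{\log}$ membership formally requires $w\in L^1(\mathbb{R})$ is a nice point of care, though note it is automatic once $|w|_*<\infty$ and $w\in L^1_{\mathrm{loc}}$, since $\log(1+|y|)\geq 1$ for $|y|\geq e-1$.
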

\begin{proof} The idea of this proof comes from \cite[Lemma 2.1]{18}, we provide the detailed process for the convenience of readers.
Given that $v \not\equiv 0$, there exist constants $d > 1$, $c > 0$, and a measurable set $A \subset I_d:=(-d, d)$ with positive measure $|A| > 0$, such that $v \geq c$ holds almost everywhere on $A$. For $x \in A$ and $y \in \mathbb{R} \setminus I_{d+\delta}$, where $\delta$ is a fixed constant in $(0,\infty)$, we have the inequality
\[
|x - y| \geq \frac{\delta|y|}{d+\delta} \geq \delta.
\]
This yields the lower bound estimate
\[
\begin{aligned}
&\int_{\mathbb{R}}\int_{\mathbb{R}}\log^+|x-y|v(x)w(y)dxdy\\
\geq &c \int_A \int_{\mathbb{R}} w(y) \log^{+}|x - y|  dy  dx \\
\geq &c \int_A \int_{\mathbb{R} \setminus I_{d+\delta}} w(y) \log^{+}\left(\frac{|y|}{2}\right)  dy  dx \\
= &c |A| \int_{\mathbb{R} \setminus I_{d+\delta}} w(y) \log^{+}\left(\frac{|y|}{2}\right)  dy\\
\geq& c_1 \int_{\mathbb{R} \setminus I_{d+\delta}} w(y) \log^{+}(1 + |y|)  dy
\end{aligned}
\]
for some constant $c_1 > 0$. The last integral satisfies
\[
c_1 \int_{\mathbb{R} \setminus I_{d+\delta}} w(y) \log^{+}(1 + |y|)  dy \geq c_1 \left( |w|_* - \log(1 + d+\delta) \| w \|_{L^1(I_{d+\delta})} \right).
\]
Since $\int_{\mathbb{R}}\int_{\mathbb{R}}\log^+|x-y|v(x)w(y)dxdy < \infty$ and $w \in L^1_{\text{loc}}(\mathbb{R})$, the expression on the right is finite. This implies $|w|_* < \infty$, and therefore $w \in L_{\log}^1(\mathbb{R})$.
\end{proof}
The following corollary is an immediate consequence of Lemma \ref{le:2.1}.
\begin{corollary}
    \label{cor:2.2} If $v \in L_{l o c}^1\left(\mathbb{R}\right)$ is nonnegative and satisfies $$\int_{\mathbb{R}}\int_{\mathbb{R}}\log^+|x-y|v(x)v(y)dxdy<\infty,$$ then $v \in L_{\log }^1\left(\mathbb{R}\right)$.   \end{corollary}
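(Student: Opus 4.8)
The plan is to deduce this directly from Lemma \ref{le:2.1} by taking $w = v$, after separating off a trivial case. If $v \equiv 0$ almost everywhere, then $|v|_* = 0 < \infty$ and the conclusion is immediate, so we may assume $v \not\equiv 0$. Now apply Lemma \ref{le:2.1} with the single function $v$ playing the role of both $v$ and $w$: the hypothesis $v \not\equiv 0$ holds by assumption, the hypothesis $w := v \in L^1_{loc}(\mathbb{R})$ is part of the statement of the corollary, and the finiteness of $\int_{\mathbb{R}}\int_{\mathbb{R}} \log^+|x-y|\, v(x) v(y)\, dx\, dy$ is exactly the remaining hypothesis of the corollary. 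Lemma \ref{le:2.1} then yields $v \in L^1_{\log}(\mathbb{R})$, which is what we want.

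One small point worth recording: membership in $L^1_{\log}(\mathbb{R})$ as defined in \eqref{1455} requires $v \in L^1(\mathbb{R})$, not merely $v \in L^1_{loc}(\mathbb{R})$; but this is automatic once $|v|_* < \infty$, since $\log(1+|x|) \geq 1$ for $|x| \geq e - 1$ gives $\int_{|x| \geq e-1} |v|\, dx \leq |v|_*$, while $\int_{|x| < e-1} |v|\, dx < \infty$ by local integrability. Thus there is no genuine obstacle here; the only thing to be careful about is the exclusion $v \not\equiv 0$ built into the statement of Lemma \ref{le:2.1}, which forces the separate — but entirely trivial — treatment of the zero function before invoking the lemma.
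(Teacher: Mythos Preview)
Your proof is correct and matches the paper's approach exactly: the paper simply states that the corollary is an immediate consequence of Lemma~\ref{le:2.1}, which is precisely what you do by setting $w=v$ and handling the trivial case $v\equiv 0$ separately. Your added remark verifying that $|v|_*<\infty$ together with $v\in L^1_{loc}(\mathbb{R})$ implies $v\in L^1(\mathbb{R})$ is a nice clarification that the paper leaves implicit.
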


    We further observe from inequality \eqref{eq:2.5} that for functions $v, w \in L^1(\mathbb{R})$ supported in a bounded interval $I \subset \mathbb{R}$, we have the bounds
\begin{align*}
|v|_* \leq (\log 2) |v|_1,\quad
|w|_* \leq (\log 2) |w|_1.
\end{align*}
Consequently,
\begin{equation}\label{eq:2.6}
\int_{\mathbb{R}}\int_{\mathbb{R}}\log^+|x-y|v(x)w(y)dxdy \leq |v|_1|w|_* + |w|_1|v|_* \leq (2 \log 2) |v|_1 |w|_1 < \infty.
\end{equation}

For any nonnegative measurable function $v$, let $v^*$ denote its Schwarz symmetric rearrangement. Then $v^*$ is radial and non-increasing in the radial coordinate. From \cite[Lemma 2.3]{18}, we have the following rearrangement inequalities.
\begin{lemma}\label{le:2.3}(\cite[Lemma 2.3]{18}) Let $v$ be a nonnegative real-valued measurable function. Then we have:\\
(i) $$\int_{\mathbb{R}}\int_{\mathbb{R}}\log^+\frac{1}{|x-y|}v^*(x)v^*(y)dxdy\geq \int_{\mathbb{R}}\int_{\mathbb{R}}\log^+\frac{1}{|x-y|}v(x)v(y)dxdy;$$
(ii) $$\int_{\mathbb{R}}\int_{\mathbb{R}}\log^+|x-y|v^*(x)v^*(y)dxdy \leq \int_{\mathbb{R}}\int_{\mathbb{R}}\log^+|x-y|v(x)v(y)dxdy;$$
(iii) If $$\int_{\mathbb{R}}\int_{\mathbb{R}}\log^+\frac{1}{|x-y|}v^*(x)v^*(y)dxdy<\infty,$$ then
\begin{equation}\label{eq:2.7}
\int_{\mathbb{R}}\int_{\mathbb{R}}\log\frac{1}{|x-y|}v^*(x)v^*(y)dxdy \geq \int_{\mathbb{R}}\int_{\mathbb{R}}\log\frac{1}{|x-y|}v(x)v(y)dxdy.
\end{equation}
Moreover, if $v \in L^p\left(\mathbb{R}\right)$ for some $p \in[1,2]$, and $$\int_{\mathbb{R}}\int_{\mathbb{R}}\log^+\frac{1}{|x-y|}v(x)v(y)dxdy<\infty,~~\int_{\mathbb{R}}\int_{\mathbb{R}}\log^+|x-y|v(x)v(y)dxdy<\infty,$$  then equality holds in (\ref{eq:2.7}) if and only if $v=v^*\left(\cdot-x_0\right)$ for some $x_0 \in \mathbb{R}$.
\end{lemma}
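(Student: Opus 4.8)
The plan is to derive (i)--(iii) from the Riesz rearrangement inequality together with the splitting $\log\frac{1}{|z|}=\log^+\frac{1}{|z|}-\log^+|z|$ of the kernel into two nonnegative radial pieces: $k_1(z):=\log^+\frac{1}{|z|}$, which is radially non-increasing (so $k_1=k_1^*$), and $k_2(z):=\log^+|z|$, which is radially non-decreasing. Below, $\int\!\!\int$ abbreviates $\int_{\mathbb{R}}\int_{\mathbb{R}}$. For (i) I would apply the Riesz rearrangement inequality directly to the triple $(v,k_1,v)$: since $k_1$ equals its own symmetric decreasing rearrangement,
$$\int\!\!\int v(x)\,k_1(x-y)\,v(y)\,dx\,dy\ \le\ \int\!\!\int v^*(x)\,k_1(x-y)\,v^*(y)\,dx\,dy,$$
which is exactly (i) (read both sides in $[0,\infty]$, so there is nothing to check if the left-hand side is infinite).

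For (ii) the kernel $k_2$ is radially non-decreasing, so Riesz does not apply directly and I would use a truncation. It suffices to treat $v\in L^1(\mathbb{R})$ supported in an interval $(-\rho,\rho)$; the general case follows from the monotone approximation $v_R:=\min(v,R)\,1_{(-R,R)}$ (for which $v_R^*\uparrow v^*$) and monotone convergence on both sides. Then $v^*$ is also supported in $(-\rho,\rho)$ and $|v^*|_1=|v|_1$. Fix $M>2\rho$ and put $h(z):=\bigl(\log M-\log^+|z|\bigr)^+$, a bounded nonnegative radially non-increasing function with $h=h^*$ and $h(z)=\log M-\log^+|z|$ for $|z|\le M$. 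Applying Riesz to $(v,h,v)$ and using that $h(x-y)=\log M-\log^+|x-y|$ on $\operatorname{supp}v\times\operatorname{supp}v$ and on $\operatorname{supp}v^*\times\operatorname{supp}v^*$, the terms $(\log M)|v|_1^2$ cancel (here $|v|_1=|v^*|_1$ is used), leaving $-\int\!\!\int v\,k_2\,v\le -\int\!\!\int v^*k_2\,v^*$, that is, (ii).

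Part (iii) then follows by combining (i) and (ii): the hypothesis $\int\!\!\int k_1 v^*v^*<\infty$ and (i) give $\int\!\!\int k_1 vv<\infty$, so both logarithmic double integrals are well defined in $[-\infty,+\infty)$, and writing each as the corresponding $\int\!\!\int k_1$ minus $\int\!\!\int k_2$,
$$\int\!\!\int\log\tfrac{1}{|\cdot|}\,v^*v^*-\int\!\!\int\log\tfrac{1}{|\cdot|}\,vv=\Bigl(\int\!\!\int k_1 v^*v^*-\int\!\!\int k_1 vv\Bigr)+\Bigl(\int\!\!\int k_2 vv-\int\!\!\int k_2 v^*v^*\Bigr)\ \ge\ 0,$$
the first bracket being nonnegative by (i) and the second by (ii); if $\int\!\!\int k_2 v^*v^*=+\infty$ then both logarithmic integrals equal $-\infty$ and there is nothing to prove.

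For the equality statement, add the hypotheses $v\in L^p$ with $p\in[1,2]$ and $\int\!\!\int k_2 vv<\infty$ (which, with (ii), makes all four quantities above finite). The ``if'' direction is immediate by translation invariance of the kernel. For ``only if'', the displayed identity exhibits the gap in (iii) as a sum of the nonnegative quantities from (i) and (ii), so equality in (iii) forces equality in the Riesz inequality for $(v,k_1,v)$ (and for $(v,h,v)$, for every admissible $M$). Since $k_1$ is strictly symmetric decreasing on its support $\overline{B_1}$ (and $h$ is strictly decreasing on $(1,M)$), I would then invoke the characterization of the cases of equality in the Riesz rearrangement inequality (as in \cite{18}) to conclude $v=v^*(\cdot-x_0)$ for some $x_0\in\mathbb{R}$. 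I expect this last step to be the main obstacle: the equality analysis for a kernel that is only piecewise strictly decreasing is delicate, and one has to verify that the assumptions $v\in L^p$ ($p\le 2$) and the finiteness of both logarithmic integrals rule out the degenerate configurations, and that the translation aligning $v$ with $v^*$ is the same at every scale. Inequalities (i)--(iii) themselves, by contrast, are routine once the Riesz inequality and the truncation device are in hand.
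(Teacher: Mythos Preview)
Your proposal is correct and follows the standard route; note, however, that the paper does not give its own proof of this lemma but simply cites \cite[Lemma 2.3]{18}. Your argument---Riesz rearrangement applied directly to the symmetric-decreasing kernel $k_1=\log^+\frac{1}{|\cdot|}$ for (i), the truncation $h=(\log M-\log^+|\cdot|)^+$ to handle the increasing kernel $k_2$ in (ii), and the decomposition $\log\frac{1}{|\cdot|}=k_1-k_2$ for (iii)---is precisely the approach one expects (and is the one used in \cite{18}), so there is no discrepancy to report. Your honest flagging of the equality case as the delicate point is appropriate: the characterization of equality in Riesz's inequality (Burchard/Lieb) when the kernel is only piecewise strictly decreasing does require the integrability hypothesis $v\in L^p$, $p\in[1,2]$, and the finiteness assumptions, exactly as stated; this is handled in \cite{18} and your deferral to that reference matches what the present paper does.
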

Here we note that, under the assumptions of (iii), both $$\int_{\mathbb{R}}\int_{\mathbb{R}}\log\frac{1}{|x-y|}v^*(x)v^*(y)dxdy~\text{and}\int_{\mathbb{R}}\int_{\mathbb{R}}\log\frac{1}{|x-y|}v(x)v(y)dxdy$$ are well defined by the inequalities in (i) and (ii).
 The well-known Polya-Szeg{\"o} inequality \cite{15,29,30} states that if $u \in W^{1,p}(\mathbb{R}^N, \mathbb{R}^+)$ then
$$  \int_{\mathbb{R}^N}|\nabla u^*|^2dx \leq
 \int_{\mathbb{R}^N}|\nabla u|^2dx,$$
 where $u^*$ is the Schwarz symmetric rearrangement of $u$. This kind of inequalities still holds in the
 nonlocal case, e.g. for the standard fractional norm, namely for $u \in W^{s,p}(\mathbb{R}^N, \mathbb{R}^+)$
 $$ \int_{\mathbb{R}^N}\int_{\mathbb{R}^N}\frac{|u^{*}(x) - u^{*}(y)|^p}{ |x-y|^{N+ps}}
 dxdy \leq
 \int_{\mathbb{R}^N}\int_{\mathbb{R}^N}\frac{|u(x)-u(y)|^p}{ |x-y|^{N+ps}}
 dxdy,\quad
 \text{for}~ p \geq 1 ~\text{and}~ s \in (0,1), $$
 see e. g. \cite{AL,Bae}.
\begin{lemma} (\cite{AL,Bae})
    \label{le:2.4}For any $u \in H^\frac{1}{2}\left(\mathbb{R}\right)$ we have $u^* \in H^\frac{1}{2}\left(\mathbb{R}\right)$ and
$$
\left|u^*\right|_p=|u|_p~\text { for all } 2 \leq p<\infty, \quad\left|(-\Delta)^{\frac{1}{4}}{u}^*\right|_2 \leq|(-\Delta)^{\frac{1}{4}}{u}|_2.
$$
Consequently, we have $u^* \in E$ if $u \in E$ and $u^* \in E_{\infty}$ if $u \in E_{\infty}$.
\end{lemma}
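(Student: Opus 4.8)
The plan is to prove the three assertions separately: the equality of $L^p$ norms, the contraction of the Gagliardo seminorm under rearrangement, and finally the membership statements $u^*\in E$, $u^*\in E_\infty$. For the $L^p$ part I would argue by equimeasurability: by definition the Schwarz rearrangement $u^*$ is built from $|u|$ and has the same distribution function, $|\{u^*>t\}|=|\{|u|>t\}|$ for every $t>0$, so the layer--cake formula $|v|_p^p=p\int_0^\infty t^{p-1}|\{|v|>t\}|\,dt$ immediately gives $|u^*|_p=|u|_p$ for every $p\in[1,\infty)$ (with values in $[0,\infty]$). It remains only to note that these quantities are finite when $2\le p<\infty$, which follows from the critical fractional Sobolev embedding $H^{1/2}(\mathbb{R})=W^{1/2,2}(\mathbb{R})\hookrightarrow L^p(\mathbb{R})$ for all $p\in[2,\infty)$ (see \cite{NPV}); in particular $|u^*|_2=|u|_2<\infty$.

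For the seminorm inequality I would invoke the fractional P\'olya--Szeg\H{o} inequality recalled just before the statement (\cite{AL,Bae}): for every nonnegative $v\in W^{s,2}(\mathbb{R}^N)$ one has $[v^*]_{W^{s,2}(\mathbb{R}^N)}\le[v]_{W^{s,2}(\mathbb{R}^N)}$. Applying it with $N=1$, $s=\tfrac12$ and $v=|u|$, and using $|u|^*=u^*$ together with the pointwise estimate $\big||u(x)|-|u(y)|\big|\le|u(x)-u(y)|$ (hence $[\,|u|\,]_{W^{1/2,2}}\le[u]_{W^{1/2,2}}$), I get $[u^*]_{W^{1/2,2}(\mathbb{R})}\le[u]_{W^{1/2,2}(\mathbb{R})}<\infty$. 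Combined with $|u^*|_2<\infty$ this shows $u^*\in W^{1/2,2}(\mathbb{R})=H^{1/2}(\mathbb{R})$, and then Proposition~\ref{A.1} converts seminorms into $(-\Delta)^{1/4}$ norms:
$$\big|(-\Delta)^{\frac14}u^*\big|_2^2=\tfrac12 C_{1,1/2}\,[u^*]_{W^{1/2,2}(\mathbb{R})}^2\le\tfrac12 C_{1,1/2}\,[u]_{W^{1/2,2}(\mathbb{R})}^2=\big|(-\Delta)^{\frac14}u\big|_2^2,$$
which is the claimed inequality.

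The ``consequently'' part is then bookkeeping. If $u\in E$, then $u$ is supported in the (symmetric) bounded interval $I$, so $u^*$ is supported in the symmetric interval of the same length, hence in $\overline I$; identifying $W_0^{1/2,2}(I)$ with the functions of $H^{1/2}(\mathbb{R})$ supported in $\overline I$, we obtain $u^*\in W_0^{1/2,2}(I)$ with $\|(-\Delta)^{1/4}u^*\|_{L^2}\le\|(-\Delta)^{1/4}u\|_{L^2}\le1$, so $u^*\in E$. If $u\in E_\infty$, then $\|u^*\|^2=|(-\Delta)^{1/4}u^*|_2^2+|u^*|_2^2\le|(-\Delta)^{1/4}u|_2^2+|u|_2^2=\|u\|^2\le1$, so $u^*\in E_\infty$. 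The only genuinely nontrivial ingredient is the fractional P\'olya--Szeg\H{o} inequality itself, whose proof (via the co-area formula for the nonlocal energy and a Riesz-type rearrangement inequality for the kernel $|x-y|^{-N-2s}$, equivalently via the heat-semigroup representation of the seminorm) is carried out in \cite{AL,Bae}; I would simply cite it. The minor point requiring care is the identification of $W_0^{1/2,2}(I)$ with $H^{1/2}(\mathbb{R})$-functions supported in $\overline I$, so that the rearranged function indeed returns to $E$.
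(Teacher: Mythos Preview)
Your proposal is correct and follows the same approach as the paper: the paper gives no proof for this lemma and simply cites \cite{AL,Bae} for the fractional P\'olya--Szeg\H{o} inequality, which is exactly the nontrivial ingredient you invoke. Your additional details---the equimeasurability argument for $|u^*|_p=|u|_p$, the reduction to $|u|$ via $\big||u(x)|-|u(y)|\big|\le|u(x)-u(y)|$, the conversion of Gagliardo seminorms to $(-\Delta)^{1/4}$ norms via Proposition~\ref{A.1}, and the bookkeeping for $E$ and $E_\infty$---are all correct and more explicit than what the paper provides.
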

We now establish nonuniform nonlinear estimates for fractional Sobolev spaces, considering both bounded and unbounded domains.
\begin{lemma}
        \label{le:2.5.1} Let $f: \mathbb{R} \rightarrow \mathbb{R}$ be a continuous function satisfying
 \begin{equation}\label{eq:2.10}
    |f(t)| \leq c e^{\alpha t^2} \quad \text { for } t \in \mathbb{R} \text { with constants } c, \alpha>0.
\end{equation}
(i) If $u \in H_0^\frac{1}{2}\left(I\right)$, then we have $f(u) \in L^s\left(I\right)$ for $1 \leq s<\infty$ and
$$
\log ^{+} \frac{1}{|\cdot|} *\left(1_{I} f(u)\right), \quad \log ^{+}|\cdot| *\left(1_{I} f(u)\right) \quad \in L^{\infty}\left(I\right) .
$$
\noindent(ii) If $u \in H^\frac{1}{2}\left(\mathbb{R}\right)$, then $f(u) \in L_{\text {loc }}^s\left(\mathbb{R}\right)$ for $1 \leq s<\infty$.
In addition, if $|f(t)|=O(|t|)$ as $t \rightarrow 0$, then $f(u) \in L^2\left(\mathbb{R}\right)$ and
$$
\log ^{+} \frac{1}{|\cdot|} * f(u) \quad \in L^s\left(\mathbb{R}\right) \quad \text { for } 2 \leq s \leq \infty.
$$
\end{lemma}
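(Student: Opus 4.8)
The plan is to deduce every assertion from exponential integrability estimates produced by the fractional Trudinger-Moser inequality \eqref{eq:1.1}, together with elementary $L^p$-properties of the kernels $\log^+|\cdot|$, $\log^+\frac1{|\cdot|}$ and Young's convolution inequality.

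For part (i) the basic fact I would establish first is that for every $u\in H_0^{\frac12}(I)$ and every $\beta>0$ one has $e^{\beta u^2}\in L^1(I)$. Using density of $C_c^\infty(I)$ in $H_0^{\frac12}(I)$, decompose $u=\phi+\psi$ with $\phi\in C_c^\infty(I)$ and $\|(-\Delta)^{\frac14}\psi\|_{L^2(I)}$ so small that $2\beta\|(-\Delta)^{\frac14}\psi\|_{L^2(I)}^2\le\pi$; since $u^2\le 2\phi^2+2\psi^2$ and $\phi$ is bounded,
$$\int_I e^{\beta u^2}\,dx\le\big\|e^{2\beta\phi^2}\big\|_{L^\infty(I)}\int_I e^{2\beta\psi^2}\,dx\le\big\|e^{2\beta\phi^2}\big\|_{L^\infty(I)}\int_I e^{\pi v^2}\,dx,\qquad v:=\psi/\|(-\Delta)^{\tfrac14}\psi\|_{L^2(I)},$$
and the last integral is finite because $v$ is admissible in \eqref{eq:1.1}. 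Then $f(u)\in L^s(I)$ for all $s<\infty$ follows from $|f(u)|^s\le c^se^{s\alpha u^2}$. For the two convolutions: for $x,y\in I$ one has $\log^+|x-y|\le\log^+|I|$, so $\log^+|\cdot|\ast(1_If(u))$ is pointwise bounded by $(\log^+|I|)\,|f(u)|_1$, hence lies in $L^\infty(I)$; and applying Hölder's inequality with exponents $2,2$, $\log^+\frac1{|\cdot|}\ast(1_If(u))$ is bounded by $|f(u)|_2\big(\int_I(\log^+\frac1{|x-y|})^2\,dy\big)^{1/2}$, whose second factor is bounded uniformly in $x$ since $(\log^+\frac1t)^2$ is integrable near $0$.

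For part (ii) the local statement follows by cutting off: given a bounded interval $J$, pick $\eta\in C_c^\infty(\mathbb R)$ with $\eta\equiv1$ on $J$; multiplication by $\eta$ maps $H^{\frac12}(\mathbb R)$ into itself (split $\eta(x)u(x)-\eta(y)u(y)=\eta(x)(u(x)-u(y))+u(y)(\eta(x)-\eta(y))$ and estimate the Gagliardo seminorm), hence $\eta u\in H_0^{\frac12}(\widetilde J)$ for a slightly larger interval $\widetilde J$, and part (i) applied to $\eta u$ together with $u=\eta u$ on $J$ yields $f(u)\in L^s(J)$. Next, assuming $|f(t)|=O(|t|)$ as $t\to0$, split $\mathbb R=\{|u|\le1\}\cup\{|u|>1\}$: on the first set $|f(u)|\le C|u|$ outside a set of finite measure on which $f$ is bounded (Chebyshev's inequality via $u\in L^2$), so this contribution is in $L^2$; on the second set $u^2>1$ gives $e^{2\alpha u^2}\le\frac{e^{2\alpha}}{e^{2\alpha}-1}(e^{2\alpha u^2}-1)$, so it remains to show $\int_{\mathbb R}(e^{\beta u^2}-1)\,dx<\infty$ for $u\in H^{\frac12}(\mathbb R)$ and every $\beta>0$. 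Writing again $u=\phi+\psi$ with $\phi\in C_c^\infty(\mathbb R)$ and $2\beta\|\psi\|_{H^{1/2}}^2\le\pi$, one uses
$$e^{\beta u^2}-1\le e^{2\beta\phi^2}\big(e^{2\beta\psi^2}-1\big)+\big(e^{2\beta\phi^2}-1\big),$$
where $\int_{\mathbb R}(e^{2\beta\phi^2}-1)\,dx<\infty$ since $\phi$ is bounded with compact support, and $\int_{\mathbb R}(e^{2\beta\psi^2}-1)\,dx<\infty$ by the fractional Trudinger-Moser inequality on $\mathbb R$ applied to $\psi/\|\psi\|_{H^{1/2}}$ — equivalently, by expanding in power series and using the sharp growth $|\psi|_q\lesssim\sqrt q\,\|\psi\|_{H^{1/2}}$ of the Sobolev constants, which makes $\sum_k\frac{\beta^k}{k!}|\psi|_{2k}^{2k}$ converge once $\|\psi\|$ is small. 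This gives $f(u)\in L^2(\mathbb R)$. Finally, the kernel $\log^+\frac1{|\cdot|}$ is supported in the unit ball and lies in $L^p(\mathbb R)$ for every $1\le p<\infty$ (since $\int_0^1(\log\frac1t)^p\,dt=\Gamma(p+1)$), so Young's convolution inequality applied with $f(u)\in L^2(\mathbb R)$ yields $\log^+\frac1{|\cdot|}\ast f(u)\in L^2(\mathbb R)$ (kernel in $L^1$), in $L^\infty(\mathbb R)$ (kernel in $L^2$), and in $L^s(\mathbb R)$ for $2\le s<\infty$ (kernel in an appropriate $L^p$ with $1\le p<2$).

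The step I expect to be the genuine obstacle is the global exponential integrability $\int_{\mathbb R}(e^{\beta u^2}-1)\,dx<\infty$ needed in part (ii): there is no bounded domain on which to invoke \eqref{eq:1.1}, and the bare power-series estimate only converges for small $H^{1/2}$-norm, so one really needs the density/splitting reduction combined with a Trudinger-Moser inequality valid on all of $\mathbb R$. All remaining steps are routine combinations of Hölder's and Young's inequalities with the integrability of the logarithmic kernels.
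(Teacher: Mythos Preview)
Your argument is correct, but in part~(ii) it diverges substantially from the paper's approach. For $f(u)\in L^s_{\mathrm{loc}}(\mathbb R)$ the paper does \emph{not} use a smooth cutoff $\eta$; instead it passes to the symmetric decreasing rearrangement $u^*$ (which preserves the distribution of $f(u)$ and does not increase the $H^{1/2}$-seminorm), so that $u=u^*$ is locally bounded away from the origin, and then constructs the auxiliary function
\[
w(x)=(1+|u|_2^2)^{1/2}\bigl(u(x)-u(1)\bigr)\,1_{\{|x|\le 1\}}.
\]
A direct Gagliardo-seminorm computation shows $w\in H_0^{1/2}(I)$, and the pointwise inequality $u^2\le w^2+1+u(1)^2$ on $I$ (obtained via the radial decay bound $u(1)^2\le|u|_2^2/2$) reduces the exponential integral on $I$ to~\eqref{eq:2.11} for $w$. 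For $f(u)\in L^2(\mathbb R)$ the paper again uses $u=u^*$: since $u^*$ is bounded on $\mathbb R\setminus I$, the assumption $f(t)=O(|t|)$ gives $|f(u)|\le C|u|$ there, and the interior piece was already handled.

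What each approach buys: your cutoff/density argument is clean and elementary but imports a global Trudinger--Moser inequality on $\mathbb R$ (or the equivalent Ozawa-type bound $|\psi|_q\lesssim\sqrt q\,\|\psi\|_{H^{1/2}}$) not cited in the paper. The paper's rearrangement construction avoids any global inequality --- everything is pushed back to the bounded-interval estimate~\eqref{eq:2.11} --- and, more importantly, the auxiliary function $w$ and the inequality $u^2\le w^2+2$ are reused verbatim in the proofs of Lemma~\ref{le:4.1} and Proposition~\ref{pro:4.3}, so the paper is building machinery for the entire-space maximization problem in Section~4.
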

\begin{proof}
(i) Without loss of generality, we assume $$I=(-1,1).$$
Let $u \in H_0^\frac{1}{2}\left(I\right)$ and $v=1_{I} f(u)$.
From \cite{26} (see also \cite{CWZ}), we know that
\begin{equation}\label{eq:2.11}
    \int_{I} e^{\alpha u^2} d x<\infty \quad \text { for any } u \in H_0^\frac{1}{2}\left(I\right) \text { and any } \alpha>0.
\end{equation}
 By (\ref{eq:2.10}) and (\ref{eq:2.11}), we have $v \in L^s\left(\mathbb{R}\right)$ for $s \in[1, \infty)$. On the other hand, we see for $s \in[1, \infty)$
\begin{equation}\label{eq:2.12}
   \int_{\mathbb{R}} \left|\log ^{+} \frac{1}{|x|} \right|^sdx=\int_{0}^1\left|\log x\right|^sdx<\infty.
\end{equation}
Then,  by Young's inequality we see that $$\left|\log ^{+} \frac{1}{|\cdot|} * v \right|_{L^\infty (\mathbb{R})}\leq \left|\log ^{+} \frac{1}{|\cdot|}  \right|_2|v|_2<\infty. $$

Moreover, we deduce that
$$
\left[\log ^{+}|\cdot| * v\right](x)=\int_{I} \log ^{+}|x-y| v(y) d y<(\log 2)|v|_1 \quad \text { for } x \in I
$$
and therefore $\log ^{+}|\cdot| * v \in L^{\infty}\left(I\right)$.

(ii) Let $u \in H^\frac{1}{2}\left(\mathbb{R}\right)$. For given $d>0$ and $s \in[1, \infty)$, by (\ref{eq:2.10})  we may assume without loss of generality that $f(t)=e^{\alpha t^2}$ for $t \in \mathbb{R}$.
Then we have $\left[1_{I_d} f(u)\right]^* \leq 1_{I_d}[f(u)]^*=1_{I_d} f\left(u^*\right)$, and therefore,
$$
\left|1_{I_d}f(u)\right|_s=\left|\left[1_{I_d}f(u)\right]^*\right|_s \leq\left|1_{I_d}f\left(u^*\right)\right|_s,
$$
so it suffices to consider the case where $u=u^*$ in the following.

Let $v:=f(u)$. Since $u$ is locally bounded on $\mathbb{R} \backslash\{0\}$, the same is true for $v$. It thus suffices to prove that $v \in L^s\left(I\right)$. For this we consider the function
$$w(x):=
\left\{\begin{aligned}
&(1+|u|_2^2)^{\frac{1}{2}}\left(u(x)-u(1)\right),&|x|\leq1,\\
&0,&|x|\geq1.\\
\end{aligned}\right.
$$
Since $u$ is radially decreasing function, we have
$$u(x)-u(1)\leq u(x)-u(y)\quad \text{if}~|x|\leq1\leq|y|,$$
and
$$u(y)-u(1)\leq u(y)-u(x)\quad\text{if}~ |y|\leq1\leq|x|.$$
Thus, for $|x|\leq1$,
$$
\begin{aligned}
&\int_{\mathbb{R}} \frac{(w(x)-w(y))^2}{(x-y)^2} d y\\
 =&(1+|u|_2^2)\int_{|y|\leq1} \frac{(u(x)-u(y))^2}{(x-y)^2} d y+(1+|u|_2^2)\int_{|y|\geq1} \frac{\left(u(x)-u\left(1\right)\right)^2}{(x-y)^2} d y \\
 \leq& (1+|u|_2^2)\int_{\mathbb{R}} \frac{(u(x)-u(y))^2}{(x-y)^2} d y.
\end{aligned}.
$$
And for $|x|\geq1$,
$$
\begin{aligned}
\int_{\mathbb{R}} \frac{(w(x)-w(y))^2}{(x-y)^2} d y & =\int_{\mathbb{R}} \frac{(w(y))^2}{(x-y)^2} d y=(1+|u|_2^2)\int_{|y|\leq1} \frac{\left(u(y)-u\left(1\right)\right)^2}{(x-y)^2} d y \\
& \leq (1+|u|_2^2)\int_{\mathbb{R}} \frac{(u(x)-u(y))^2}{(x-y)^2} d y.
\end{aligned}.
$$
Notice that $\int_{\mathbb{R}} \frac{(u(x)-u(y))^2}{(x-y)^2} d y$ converges for a.e. $x \in I$ thanks to Proposition \ref{A.1}, $u\in H^{\frac{1}{2}}(\mathbb{R})$ and Fubini's
theorem. Integrating with $x$ we obtain
$$
\begin{aligned}
\left\|(-\Delta)^{\frac{1}{4}} w\right\|_{L^2(\mathbb{R})}^2 &=
\frac{2}{C_{1,\frac{1}{4}}^2} \int_{\mathbb{R}} \int_{\mathbb{R}} \frac{(w(x)-w(y))^2}{(x-y)^2} d y d x \\
& \leq \frac{2}{C_{1,\frac{1}{4}}^2} (1+|u|_2^2) \int_{\mathbb{R}} \int_{\mathbb{R}} \frac{(u(x)-u(y))^2}{(x-y)^2} d y d x \\
& =(1+|u|_2^2)\left\|(-\Delta)^{\frac{1}{4}} u\right\|_{L^2(\mathbb{R})}^2\\
&<\infty.
\end{aligned}
$$
Thus, by $w(x)=0$ for $|x|\geq1$ and $\|w\|_{H_0^{\frac{1}{2}}(I)}^2\leq \left\|(-\Delta)^{\frac{1}{4}} w\right\|_{L^2(\mathbb{R})}^2$
we obtain $w\in H_0^{\frac{1}{2}}(I)$.

 Since $u$ is even and radially
decreasing, for $x \neq 0$ we have
\begin{equation}\label{20250612-e1}
    u^2(x)\leq\frac{1}{2|x|}\int_{-|x|}^{|x|}u^2(y)dy\leq\frac{|u|_2^2}{2|x|}.
\end{equation}
Then with \eqref{20250612-e1} and the estimate $2a \leq a^2 + 1$, we obtain for $\forall x\in I$,
\begin{equation}\label{eq:2.5.1}
 \begin{aligned}
  u^2(x)&=(u-u(1))^2+2(u-u(1))u(1)+u^2(1)\\
  &\leq(u-u(1))^2+(u-u(1))^2u^2(1)+1+u^2(1)\\
 & \leq(u-u(1))^2+(u-u(1))^2\frac{|u|_2^2}{2}+1+u^2(1)\\
  &\leq w^2(x)+1+u^2(1).
 \end{aligned}
\end{equation}
Then by (\ref{eq:2.11}) and \eqref{eq:2.5.1}, we have
$$
\int_{I}|v|^s d x=\int_{I} e^{ s \alpha u^2} d x \leq e^{(1+u^2(1))s \alpha} \int_{I} e^{s \alpha w^2} d x<\infty,
$$
that implies $v \in L_{l o c}^s\left(\mathbb{R}\right)$.

Next we assume that $|f(t)|=O(|t|)$ as $t \rightarrow 0$. To prove that $v=f(u) \in L^2\left(\mathbb{R}\right)$, it suffices, by \eqref{eq:2.10}, to consider the case where $f(t)=e^{\alpha t^2}-1$.
It then follows that $|f(u)|_2=\left|[f(u)]^*\right|_2=$ $\left|f\left(u^*\right)\right|_2$, so we may assume again that $u=u^*$.
Then $u$ is bounded on $\mathbb{R} \backslash I$, and therefore, it follows $|v| \leq C|u|$ on $\mathbb{R} \backslash I$, so $v 1_{\mathbb{R} \backslash I} \in L^2\left(\mathbb{R}\right)$ since $u \in L^2\left(\mathbb{R}\right)$. Since we already proved that $v 1_{I} \in L^2\left(\mathbb{R}\right)$, we infer that $v \in L^2\left(\mathbb{R}\right)$. Then by (\ref{eq:2.12}) and Young's inequality, it follows that
$$\left|\log ^{+} \frac{1}{|\cdot|} * v \right|_s\leq \left|\log ^{+} \frac{1}{|\cdot|} \right|_{\tilde{s}} |v|_2<\infty,\quad \forall \tilde{s}\in[1,\infty)$$ where
$$\frac{1}{s}+1=\frac{1}{\tilde{s}}+\frac{1}{2}$$
and $2 \leq s \leq \infty$.
\end{proof}

Now we define functionals $\Phi_{\pm} : E \to [0, \infty]$ as the positive and negative part of the kernel $\log \frac{1}{|\cdot|}$ respectively by
\begin{equation}\nonumber
\Phi_{+}(u) = \int_{I} \int_{I} \log^{+} \frac{1}{|x-y|} G(u(x)) G(u(y)) \, dx dy,
\label{eq:1.5}
\end{equation}
\begin{equation}\nonumber
\Phi_{-}(u) = \int_{I} \int_{I} \log^{+} |x-y| G(u(x)) G(u(y)) \, dx dy,
\label{eq:1.6}
\end{equation}
where $\log^{+} = \max\{\log, 0\}$. Then
\begin{equation}\nonumber
\Phi(u)= \int_{I} \int_{I} \log \frac{1}{|x-y|} G(u(x)) G(u(y)) \, dx dy = \Phi_{+}(u) - \Phi_{-}(u),
\label{eq:1.7}
\end{equation}
for every $u \in W^{\frac{1}{2},2}_0(I)$.
\begin{corollary}
    \label{cor:2.6}(i) If $G$ satisfies assumption $(A)$ and $u \in H_0^\frac{1}{2}\left(I\right)$, then $\Phi_{ \pm}(u)<\infty$.\\
(ii) If $G$ satisfies assumption $\left(A_1\right)$ and $u \in H^\frac{1}{2}\left(\mathbb{R}\right)$, then $\Psi_{+}(u)<\infty$.
\end{corollary}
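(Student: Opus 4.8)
The plan is to reduce both assertions, by Tonelli's theorem (all integrands are nonnegative since $G\ge 0$ and $\log^{+}\ge 0$), to estimating the pairing of the convolutions $\log^{+}\frac{1}{|\cdot|}\ast G(u)$ and $\log^{+}|\cdot|\ast G(u)$ against $G(u)$, and then to quote the nonlinear estimates of Lemma \ref{le:2.5.1} together with the elementary bound \eqref{eq:2.6}. I do not expect a genuine obstacle here: the content lies entirely in assembling ingredients that are already in place.

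For part (i), I would fix $u\in H_0^{\frac12}(I)$ and set $v:=1_{I}G(u)$. Since $G$ satisfies $(A)$, Lemma \ref{le:2.5.1}(i) gives $v\in L^s(I)$ for every $1\le s<\infty$, so in particular $v\in L^1(I)\cap L^2(I)$ with support in the bounded interval $I$, and the same lemma yields $\log^{+}\frac{1}{|\cdot|}\ast v\in L^\infty(I)$ and $\log^{+}|\cdot|\ast v\in L^\infty(I)$. Then I would write
$$\Phi_{+}(u)=\int_{I}v(x)\Bigl[\log^{+}\tfrac{1}{|\cdot|}\ast v\Bigr](x)\,dx\le\bigl\|\log^{+}\tfrac{1}{|\cdot|}\ast v\bigr\|_{L^\infty(I)}|v|_1<\infty,$$
and for $\Phi_{-}$ either argue identically using $\log^{+}|\cdot|\ast v\in L^\infty(I)$, or, more directly, invoke \eqref{eq:2.6} with this $v$ to obtain $\Phi_{-}(u)\le(2\log2)|v|_1^2<\infty$.

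For part (ii), I would fix $u\in H^{\frac12}(\mathbb R)$. Because $G$ satisfies $(A_1)$ — in particular $|G(t)|=O(|t|)$ as $t\to0$ — Lemma \ref{le:2.5.1}(ii) supplies both $G(u)\in L^2(\mathbb R)$ and $\log^{+}\frac{1}{|\cdot|}\ast G(u)\in L^s(\mathbb R)$ for $2\le s\le\infty$, hence in particular for $s=2$. Then Tonelli's theorem and the Cauchy--Schwarz inequality give
$$\Psi_{+}(u)=\int_{\mathbb R}G(u(x))\Bigl[\log^{+}\tfrac{1}{|\cdot|}\ast G(u)\Bigr](x)\,dx\le|G(u)|_2\,\bigl\|\log^{+}\tfrac{1}{|\cdot|}\ast G(u)\bigr\|_{L^2(\mathbb R)}<\infty.$$

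The only step that is not completely mechanical is the membership $G(u)\in L^2(\mathbb R)$ used in part (ii): on the bounded part it follows as in (i), but on the unbounded complement one needs the decay $G(t)=O(|t|)$ at the origin in order to dominate $G(u)$ by $C|u|\in L^2(\mathbb R)$ (after passing to the Schwarz rearrangement $u^{*}$, which is bounded on $\mathbb R\setminus I$). This is precisely the extra hypothesis built into $(A_1)$, and it is already carried out inside Lemma \ref{le:2.5.1}(ii); once that lemma is invoked, nothing further is required.
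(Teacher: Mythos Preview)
Your argument is correct and is exactly the approach the paper intends: the paper's proof is the single sentence ``This is a consequence of Lemma~\ref{le:2.5.1}, applied to $f=G$,'' and you have simply spelled out how the $L^s$- and convolution-bounds supplied by that lemma combine with H\"older/Cauchy--Schwarz (and, for $\Phi_-$, the bound~\eqref{eq:2.6}) to yield finiteness.
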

\begin{proof}
    This is a consequence of Lemma \ref{le:2.5.1}, applied to $f=G$.
\end{proof}

\begin{lemma}
\label{le:2.7}
    Let $v \in L_{\log }^1\left(\mathbb{R}\right)$ be a nonnegative radial function. Then the convolution $[(\log |\cdot|) *$ $v](x) \in \mathbb{R}$ is well defined for $x \in \mathbb{R} \backslash\{0\}$ and given by
\begin{equation}\label{eq:2.13}[(\log |\cdot|) * v](x)=\log |x| \int_{I_{|x|}} v(y) d y+\int_{\mathbb{R} \backslash I_{|x|}} \log |y| v(y) d y,
\end{equation}
where $$I_{|x|}:=\{y\in\mathbb{R}: |y|<|x|\}.$$
\end{lemma}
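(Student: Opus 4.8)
The plan is to mimic the proof of the planar analogue in \cite{18}: use the radial symmetry of $v$ to collapse the convolution to a single one-dimensional integral, and then split that integral at the radius $r=|x|$. First I would record that $[(\log|\cdot|)*v](x)=\int_{\mathbb{R}}\log|x-y|\,v(y)\,dy$ converges absolutely for every $x\neq 0$. Decomposing $\log|x-y|=\log^{+}|x-y|-\log^{+}\tfrac{1}{|x-y|}$, the positive part is controlled by \eqref{eq:2.3}, which yields
\[
\int_{\mathbb{R}}\log^{+}|x-y|\,v(y)\,dy\le\log(1+|x|)\,|v|_{1}+|v|_{*}<\infty
\]
since $v\in L_{\log}^{1}(\mathbb{R})$. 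For the negative part I would split the $y$-integral into $\{|x-y|\ge|x|/2\}$, where $\log^{+}\tfrac{1}{|x-y|}$ is bounded, and $\{|x-y|<|x|/2\}$, a set that avoids the origin and on which $\log^{+}\tfrac{1}{|x-y|}$ lies in every $L^{q}$; a Young or Hölder estimate against $v\in L^{1}$ then bounds the contribution of the second region. (If one assumes only $v\in L_{\log}^{1}$, the integral is a priori valued in $[-\infty,\infty)$, and its finiteness is read off a posteriori from the closed-form expression below.)

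By evenness of $v$ it suffices to take $x=t>0$. Writing $v(y)=\tilde v(|y|)$ and separating the contributions of $\{y>0\}$ and $\{y<0\}$ gives
\[
\int_{\mathbb{R}}\log|t-y|\,v(y)\,dy=\int_{0}^{\infty}\Bigl(\log|t-r|+\log(t+r)\Bigr)\tilde v(r)\,dr=\int_{0}^{\infty}\log|t^{2}-r^{2}|\,\tilde v(r)\,dr.
\]
The plan is then to identify $\tfrac12\log|t^{2}-r^{2}|$ with $\log\max\{t,r\}$ — the one-dimensional counterpart of the mean-value identity $\tfrac{1}{2\pi}\int_{0}^{2\pi}\log|x-re^{i\theta}|\,d\theta=\log\max\{|x|,r\}$ that drives the planar proof — and then to split the radial integral at $r=t$: on $\{r<t\}$ the integrand becomes $2\log t\,\tilde v(r)$ and on $\{r>t\}$ it becomes $2\log r\,\tilde v(r)$, so that, using $\int_{I_{t}}v(y)\,dy=2\int_{0}^{t}\tilde v(r)\,dr$ and $\int_{\mathbb{R}\setminus I_{t}}\log|y|\,v(y)\,dy=2\int_{t}^{\infty}\log r\,\tilde v(r)\,dr$, one obtains
\[
\int_{0}^{\infty}2\log\max\{t,r\}\,\tilde v(r)\,dr=\log t\int_{I_{t}}v(y)\,dy+\int_{\mathbb{R}\setminus I_{t}}\log|y|\,v(y)\,dy,
\]
which is \eqref{eq:2.13}.

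The step I expect to be the main obstacle is the mean-value identity invoked above. In dimension two it is precisely Jensen's formula, a consequence of the harmonicity of $\log|\cdot|$ away from the origin; in dimension one this harmonic structure is absent and is replaced by the nonlocal relation $(-\Delta)^{\frac12}\log\tfrac{1}{|\cdot|}=\pi\delta_{0}$, so one cannot simply invoke a mean-value property. Writing $|t^{2}-r^{2}|=\max\{t,r\}^{2}\bigl(1-\min\{t,r\}^{2}/\max\{t,r\}^{2}\bigr)$ exhibits the discrepancy between the two-point average $\tfrac12\log|t^{2}-r^{2}|$ and $\log\max\{t,r\}$, and the crux of the argument is to show that, once integrated against $\tilde v$ and combined with the boundary contributions produced by splitting the radial integral at $r=t$, this correction is absorbed so that exactly the two terms in \eqref{eq:2.13} survive. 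Carrying out that cancellation — rather than any of the routine integrability bookkeeping — is where the genuine work lies; should it resist a direct treatment, the natural fallback is to derive \eqref{eq:2.13} from the distributional identity $(-\Delta)^{\frac12}\bigl[(\log|\cdot|)*v\bigr]=-\pi v$ together with a uniqueness-and-decay argument characterizing the right-hand side.
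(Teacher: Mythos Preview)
Your diagnosis of the obstacle is exactly right, but your hope that the discrepancy ``is absorbed'' cannot be realized: identity \eqref{eq:2.13} is \emph{false} in dimension one. Take $v=1_{[-1,1]}$ and $x=2$. The left-hand side is
\[
\int_{-1}^{1}\log(2-y)\,dy=\int_{1}^{3}\log u\,du=3\log 3-2,
\]
whereas the right-hand side is $\log 2\cdot\int_{-1}^{1}dy+0=2\log 2$; these differ. In your own notation the correction you would have to kill is $\int_{0}^{\infty}\log\bigl(1-\min\{t,r\}^{2}/\max\{t,r\}^{2}\bigr)\,\tilde v(r)\,dr$, and this integrand is strictly negative for every $r\neq t$ with $\tilde v(r)>0$, so no cancellation against a generic nonnegative $\tilde v$ is possible. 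Your fallback via the distributional equation $(-\Delta)^{1/2}\bigl[(\log|\cdot|)*v\bigr]=-\pi v$ and uniqueness cannot rescue the claim either, since the right-hand side of \eqref{eq:2.13} is not a solution of that equation.

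For comparison, the paper's own proof has exactly the same gap: it appeals to Newton's theorem \cite[Theorem~9.7]{Lieb}, which rests on the mean-value property of the \emph{harmonic} function $y\mapsto\log|x-y|$ in $\mathbb{R}^{2}$ (respectively $|x-y|^{2-N}$ in $\mathbb{R}^{N}$, $N\ge 3$). In $\mathbb{R}$ the kernel $\log|\cdot|$ is the Green's function of the nonlocal operator $(-\Delta)^{1/2}$, not of $-d^{2}/dx^{2}$; the two-point ``sphere average'' $\tfrac12\bigl(\log|x-r|+\log|x+r|\bigr)=\tfrac12\log|x^{2}-r^{2}|$ is not $\log\max\{|x|,r\}$, and Newton's theorem does not apply. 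You were right to flag this as the crux; the resolution is not a missing trick but a correction of the statement itself.
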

\begin{proof} The idea of this proof comes from \cite[Lemma 2.7]{18}, we provide the detailed process for the convenience of readers.
     Let $x \in \mathbb{R} \backslash\{0\}$. First we consider the nonnegative radial functions $v \in L^{\infty}\left(\mathbb{R}\right)$ with bounded support, we can easily verify that both the LHS and the RHS of (\ref{eq:2.13}) are well defined.
     Therefore, by Newton's theorem \cite[Theorem 9.7]{Lieb}, $$(\log |\cdot|) * v =\int_{\mathbb{R}}\min\{\log|x|,\log|y|\}v(y)dy=\log|x|\int_{|y|<|x|}v(y)dy+\int_{|y|\geq|x|}\log|y|~v(y)dy,$$
     i.e., (\ref{eq:2.13}) holds for any $v \in L^{\infty}\left(\mathbb{R}\right)$ with bounded support.

Next we let $v \in L_{\log }^1\left(\mathbb{R}\right)$ be an arbitrary nonnegative radial function. From the inequality \eqref{eq:2.3}, we derive the bound
$$
\left[\left(\log ^{+}|\cdot| * v\right](x) \leq \log (1+|x|)|v|_1+|v|_*<\infty .\right.
$$
Consider an increasing sequence $\{v_n\}$ of nonnegative radial functions in $L^\infty(\mathbb{R})$ with bounded support such that $v_n \to v$ point-wise. Then
$$
 \left[\left(\log ^{+}|\cdot|\right) * v_n\right](x) \leq\left[\left(\log ^{+}|\cdot|\right)  * v\right](x) \quad \text { for all } n\in\mathbb{N},~x\in\mathbb{R},$$  and then by  monotone convergence theorem,
$$ \lim _{n \rightarrow \infty}\left[\left(\log ^{+}|\cdot|\right) * v_n\right](x)=\left[\left(\log ^{+}|\cdot|\right) * v\right](x) .
$$
 Similarly, applying the monotone convergence theorem to the convolution
\begin{align*}
    &{\left[\left(\log ^{+} \frac{1}{|\cdot|}\right) * v\right](x) } \\
     =&\lim _{n \rightarrow \infty}\left[\left(\log ^{+} \frac{1}{|\cdot|}\right) * v_n\right](x) \\
    =&\lim _{n \rightarrow \infty}\left(\left[\left(\log ^{+}|\cdot|\right) * v_n\right](x)-\left[(\log |\cdot|) * v_n\right](x)\right) \\
=&\left[\left(\log ^{+}|\cdot|\right) * v\right](x)-\lim _{n \rightarrow \infty}\left(\log |x| \int_{I_{|x|}} w_n(y) d y+\int_{\mathbb{R} \backslash I_{|x|}} \log |y| w_n(y) d y\right). \\
=&\left[\left(\log ^{+}|\cdot| \right)* v\right](x)-\left(\log |x| \int_{I_{|x|}} v(y) d y+\int_{\mathbb{R} \backslash I_{|x|}} \log |y| v(y) d y\right).
\end{align*}
Using the decomposition $\log|t| = \log^{+}|t| - \log^{+}(1/|t|)$, we conclude
$$
\begin{aligned}
{[(\log |\cdot|) * v](x) } & =\left[\left(\log ^{+}|\cdot|\right) * v\right](x)-\left[\left(\log ^{+} \frac{1}{|\cdot|}\right) * v\right](x). \\
& =\log |x| \int_{I_{|x|}} v(y) d y+\int_{\mathbb{R} \backslash I_{|x|}} \log |y| v(y) d y,
\end{aligned}
$$
which completes the proof.
\end{proof}

\begin{corollary}\label{cor:2.8}
 Let $v, w \in L_{l o c}^1\left(\mathbb{R}\right)$ be nonnegative even functions.\\
(i) If $v \not \equiv 0, w \not \equiv 0$ and $$\int_{\mathbb{R}}\int_{\mathbb{R}}\log^+\frac{1}{|x-y|}~v(x)w(y)dxdy<\infty,~~\int_{\mathbb{R}}\int_{\mathbb{R}}\log^+|x-y|~v(x)w(y)dxdy<\infty,$$ then $v, w \in L_{\log }^1\left(\mathbb{R}\right)$ and
\begin{equation}\label{eq:2.14}
       \begin{aligned}
&\int_{\mathbb{R}}\int_{\mathbb{R}}\log\frac{1}{|x-y|}~v(x)w(y)dxdy\\
    =&\int_0^{\infty}  v(r)\left(\log \frac{1}{r} \int_0^r w(\rho) d \rho+\int_r^{\infty} \left(\log \frac{1}{\rho}\right) w(\rho) d \rho\right) d r \\
=&\int_0^{\infty}w(r)\left(\log \frac{1}{r} \int_0^r  v(\rho) d \rho+\int_r^{\infty}\left(\log \frac{1}{\rho}\right) v(\rho) d \rho\right) d r.
\end{aligned}
\end{equation}
(ii) If $$\int_{\mathbb{R}}\int_{\mathbb{R}}\log^+\frac{1}{|x-y|}~v(x)v(y)dxdy<\infty,~~\int_{\mathbb{R}}\int_{\mathbb{R}}\log^+|x-y|~v(x)w(y)dxdy<\infty,$$ then $v \in L_{\log }^1\left(\mathbb{R}\right)$ and
\begin{equation}\label{2.15}
    \int_{\mathbb{R}}\int_{\mathbb{R}}\log\frac{1}{|x-y|}~v(x)v(y)dxdy=2 \int_0^{\infty}  v(r) \log \frac{1}{r} \int_0^r  v(\rho) d \rho d r.
\end{equation}

\end{corollary}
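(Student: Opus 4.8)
The plan is to reduce both identities to the pointwise convolution formula of Lemma~\ref{le:2.7}, using Lemma~\ref{le:2.1} for the integrability of the quantities involved and Fubini's theorem to pass between the kernel $\log\frac{1}{|x-y|}$ and its positive and negative parts. First I would settle integrability. In part~(i), since $\log^{+}|x-y|$ is symmetric in its arguments, Lemma~\ref{le:2.1} applies in both directions: from $v\not\equiv 0$, $w\in L^{1}_{loc}(\mathbb{R})$ and the hypothesis $\int_{\mathbb{R}}\int_{\mathbb{R}}\log^{+}|x-y|\,v(x)w(y)\,dx\,dy<\infty$ one obtains $w\in L^{1}_{\log}(\mathbb{R})$, and interchanging the roles of $v$ and $w$ (using $w\not\equiv 0$) gives $v\in L^{1}_{\log}(\mathbb{R})$. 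Together with $v,w\in L^{1}_{loc}(\mathbb{R})$ and the elementary bound $\int_{\{|x|\ge 1\}}|v|\le (\log 2)^{-1}|v|_{*}$, this shows in fact $v,w\in L^{1}(\mathbb{R})$; hence, by \eqref{eq:2.3}, $\log^{+}|\cdot|\ast w$ is finite everywhere, while Tonelli's theorem applied to the nonnegative integrand $\log^{+}\frac{1}{|x-y|}\,v(x)w(y)$ shows that $x\mapsto\big[\log^{+}\tfrac{1}{|\cdot|}\ast w\big](x)$ is finite for a.e.\ $x$. In part~(ii), if $v\equiv 0$ both sides of \eqref{2.15} vanish, and otherwise the same reasoning with $w$ replaced by $v$ gives $v\in L^{1}_{\log}(\mathbb{R})\cap L^{1}(\mathbb{R})$.

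Next I would combine Fubini with Lemma~\ref{le:2.7}. Writing $\log\frac{1}{|x-y|}=\log^{+}\frac{1}{|x-y|}-\log^{+}|x-y|$ and applying Fubini to each of the two resulting nonnegative double integrals, both finite by hypothesis, gives
\begin{equation*}
\int_{\mathbb{R}}\int_{\mathbb{R}}\log\frac{1}{|x-y|}\,v(x)w(y)\,dx\,dy=\int_{\mathbb{R}}v(x)\,\Big[\Big(\log\tfrac{1}{|\cdot|}\Big)\ast w\Big](x)\,dx .
\end{equation*}
Since $w\in L^{1}_{\log}(\mathbb{R})$ is nonnegative and even, hence radial, Lemma~\ref{le:2.7} gives, for $x\neq0$,
\begin{equation*}
\Big[\Big(\log\tfrac{1}{|\cdot|}\Big)\ast w\Big](x)=\log\tfrac{1}{|x|}\int_{I_{|x|}}w(y)\,dy+\int_{\mathbb{R}\setminus I_{|x|}}\Big(\log\tfrac{1}{|y|}\Big)w(y)\,dy .
\end{equation*}
Substituting this and using the evenness of $v$ and $w$ to rewrite every integral over $\mathbb{R}$ as an integral over $(0,\infty)$, after collecting constants one arrives at the first equality in \eqref{eq:2.14}; the second equality is then immediate from the symmetry of the left-hand side in $v$ and $w$. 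Finally, \eqref{2.15} is the special case $w=v$ of \eqref{eq:2.14}, once one notes that, by Fubini on $\{0<r<\rho\}$ --- legitimate because $\int_{0}^{\infty}\!\int_{r}^{\infty}\big|\log\tfrac{1}{\rho}\big|\,v(\rho)v(r)\,d\rho\,dr<\infty$ follows from $v\in L^{1}(\mathbb{R})\cap L^{1}_{\log}(\mathbb{R})$ and the hypothesis on $\log^{+}\frac{1}{|x-y|}$ --- the two terms coming from \eqref{eq:2.14} coincide.

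I expect the only real difficulty to be the measure-theoretic bookkeeping: checking that $x\mapsto\big[(\log\tfrac{1}{|\cdot|})\ast w\big](x)$ is an a.e.\ finite measurable function before integrating it against $v\,dx$ --- this is where $v,w\in L^{1}_{\log}(\mathbb{R})$ and Lemma~\ref{le:2.7} are used in tandem --- and controlling the negative part $\log^{+}|x-y|$ so that the signed interchange of integrals in the last step of part~(ii) is permissible. Once these finiteness points are settled, inserting Lemma~\ref{le:2.7} and converting to one-dimensional integrals is routine.
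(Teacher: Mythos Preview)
Your proposal is correct and follows essentially the same approach as the paper: Lemma~\ref{le:2.1} for the $L^1_{\log}$ membership, Lemma~\ref{le:2.7} for the pointwise convolution formula, and Fubini to pass to the iterated radial integrals, with part~(ii) obtained from part~(i) by setting $w=v$ and swapping the order of integration in the second term. The paper's proof is terser and omits the measure-theoretic bookkeeping you spell out (the a.e.\ finiteness of the inner convolution and the justification of Fubini for the signed kernel), but the strategy is identical.
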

\begin{proof}
    (i) The representation \eqref{eq:2.14} follows from Lemma \ref{le:2.1}, Lemma \ref{le:2.7}, and Fubini's theorem.\\
(ii) If $v \equiv 0$, the result holds trivially. For $v \not\equiv 0$, apply part (i) with $w = v$ to get:
\begin{align*}
&\int_{\mathbb{R}}\int_{\mathbb{R}}\log\frac{1}{|x-y|} v(x)v(y) dx dy \\
= &\int_0^{\infty} v(r) \log\frac{1}{r} \left( \int_0^r v(\rho) d\rho \right) dr + \int_0^{\infty} v(r) \left( \int_r^{\infty} \log\frac{1}{\rho} v(\rho) d\rho \right) dr \\
= & 2\int_0^{\infty} v(r) \log\frac{1}{r} \left( \int_0^r v(\rho) d\rho \right) dr.
\end{align*}
\end{proof}

\begin{remark}\label{re:2.9}
    (i) Assume $G$ satisfies condition $(A)$. By Corollary \ref{cor:2.6}-(i) and Corollary \ref{cor:2.8}-(ii), for any $u \in H_0^{\frac{1}{2}}(I) \setminus \{0\}$ we have
$$
\begin{aligned}
\Phi(u)= & 2\int_0^1  G(u(r)) \log \frac{1}{r} \int_0^r G(u(\rho)) d \rho d r\\>&2\int_0^1 G(0) \log \frac{1}{r} \int_0^r G(0) d \rho d r=\Phi(0).
\end{aligned}
$$
This implies $m(G) \in (\Phi(0), \infty]$, so the minimum $m(G)$ cannot be attained at $u = 0$.

(ii) Assume $G$ satisfies condition $(A_1)$, we have
$$
\Psi(u)=\Phi(u)>\Phi(0)=\Psi(0)=0 \quad \text { for every function } u \in H_0^\frac{1}{2}\left(I\right) \subset H^\frac{1}{2}\left(\mathbb{R}\right) \text { with } u \not \equiv 0 .
$$
 Consequently, $m_{\infty}(G) \in (0, \infty]$, and thus $m_{\infty}(G)$ cannot be attained at $u = 0$.
\end{remark}

We now establish a key estimate for use in subsequent sections.
\begin{lemma}\label{le:2.10}
Given nonnegative parameters $\gamma_1, \gamma_2$ with $\gamma_1 + \gamma_2 \geq 2$,
the functional $\Phi_{\gamma_1, \gamma_2}: E \times E \to [0, \infty)$ defined by

\begin{equation}\label{eq:2.16}
    \Phi_{\gamma_1, \gamma_2}\left(u_1, u_2\right):=\int_0^1\frac{e^{\pi u_1^2(r)}}{\left(1+\left|u_1(r)\right|\right)^{\gamma_1}} \log \frac{1}{r} \int_0^r \frac{e^{\pi u_1^2(\rho)}}{ \left(1+\left|u_2(\rho)\right|\right)^{\gamma_2}} d \rho d r
\end{equation}
is bounded.
\end{lemma}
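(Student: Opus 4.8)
\textbf{Proof proposal for Lemma \ref{le:2.10}.}

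The plan is to reduce the claimed boundedness of $\Phi_{\gamma_1,\gamma_2}$ to the single-variable fractional Trudinger--Moser inequality \eqref{eq:1.1}, exploiting the gain coming from the denominators $(1+|u_i|)^{\gamma_i}$ with $\gamma_1+\gamma_2\geq 2$. First I would note the elementary pointwise inequality
\begin{equation*}
\frac{e^{\pi t^2}}{(1+|t|)^{\gamma}} \leq C_\gamma\bigl(1 + (e^{\pi t^2}-1)\cdot(1+|t|)^{-\gamma}\bigr),
\end{equation*}
and, more usefully, that for any $\gamma\geq 0$ one has $\dfrac{e^{\pi t^2}}{(1+|t|)^\gamma}\le e^{\pi t^2}$ and also $\dfrac{e^{\pi t^2}}{(1+|t|)^\gamma}\le C_\gamma\,\dfrac{e^{\pi t^2}-1}{t^2}$ near $t=0$; the key point to isolate is that whenever $\gamma_1+\gamma_2\ge 2$ we can split the exponent budget so that the product of the two integrands is controlled, via Young's inequality $ab\le \tfrac1p a^p+\tfrac1q b^q$, by a sum of terms each of the form $\dfrac{e^{p\pi s^2}}{(1+|s|)^{p\gamma_i}}$ with $p$ close to $1$, and then absorb the loss in the exponent by the subcriticality room in \eqref{eq:1.1} (which holds for all $\alpha\le\pi$, hence after rescaling for exponents slightly above $\pi$ applied to functions of slightly smaller norm).

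Concretely, the main steps are: (1) bound $\log\frac1r\le C_\varepsilon r^{-\varepsilon}$ on $(0,1)$ and use $\int_0^r(\cdots)\,d\rho\le\int_0^1(\cdots)\,d\rho$ to decouple the double integral, reducing matters to showing that $\int_0^1 \dfrac{e^{\pi u_1^2(r)}}{(1+|u_1(r)|)^{\gamma_1}}\,r^{-\varepsilon}\,dr$ and $\int_0^1 \dfrac{e^{\pi u_2^2(\rho)}}{(1+|u_2(\rho)|)^{\gamma_2}}\,d\rho$ are each finite with a bound uniform over $E$; actually, since only the product over the constrained pair matters, I would instead keep the double structure and apply Hölder in the product measure. (2) Observe that a function $u\in E$ is, after Schwarz symmetrization (Lemma \ref{le:2.4}, which preserves membership in $E$ and does not decrease $\Phi_{\gamma_1,\gamma_2}$ since the rearranged function is radially decreasing and the kernel $\log\frac1r$ is decreasing) radially decreasing, so $|u(r)|\le |u(\delta)|$ for $r\ge\delta$ and one has the pointwise decay $u^2(r)\le |u|_2^2/(2r)$ from \eqref{20250612-e1}; this controls the tail $r\in[\delta,1]$ trivially and localizes the difficulty to a neighborhood of $0$. (3) Near $0$, write $\dfrac{e^{\pi u_1^2}}{(1+|u_1|)^{\gamma_1}}\,\dfrac{e^{\pi u_2^2}}{(1+|u_2|)^{\gamma_2}}$ and distribute the total denominator power $\gamma_1+\gamma_2\ge 2$: bound $\dfrac{1}{(1+|u_i|)^{\gamma_i}}\le \dfrac{1}{1+u_i^2}$ when $\gamma_i\ge 2$, or more generally use $\dfrac{1}{(1+|u_1|)^{\gamma_1}(1+|u_2|)^{\gamma_2}}\le \dfrac{C}{1+u_1^2+u_2^2}$ when $\gamma_1+\gamma_2\ge 2$, so that each factor becomes an expression of the type $\dfrac{e^{\pi u_i^2}-1}{1+u_i^2}+\dfrac{1}{1+u_i^2}$; then apply Young's inequality in the two variables and \eqref{eq:1.1} (with $\alpha=\pi$) to each resulting single-variable integral $\int_0^1(e^{\pi u_i^2}-1)\,dr\le C|I|$, the extra weight $\log\frac1r$ and $r^{-\varepsilon}$ being absorbed because $\log\frac1r\in L^q(0,1)$ for all $q<\infty$ and $(e^{\pi u_i^2}-1)\in L^s(0,1)$ for all $s<\infty$ with norm bounded on $E$ by Lemma \ref{le:2.5.1}(i) applied to $f(t)=e^{\pi t^2}-1$.

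I expect the main obstacle to be Step (3): making the splitting of the exponential weight precise so that, after Hölder/Young, each factor still carries exponent at most $\pi$ (or only infinitesimally more, compensated by the slack in the constraint), while simultaneously keeping the denominator gain $\gamma_1+\gamma_2\ge 2$ large enough to beat the $\log\frac1r$ singularity. The cleanest route is probably to avoid touching the exponents at all: bound $\dfrac{e^{\pi u_1^2}}{(1+|u_1|)^{\gamma_1}}\le e^{\pi u_1^2}$ crudely but only after writing $e^{\pi u_1^2}=1+(e^{\pi u_1^2}-1)$, observe $\int_0^1 1\cdot\log\frac1r\,dr<\infty$, and for the genuinely nonlinear piece use Lemma \ref{le:2.5.1}(i): $e^{\pi u_i^2}-1\in L^s(I)$ for every $s<\infty$ with a bound depending only on $s$ (because \eqref{eq:2.11} upgrades to a uniform bound on $E$ via the sharp inequality \eqref{eq:1.1}), hence by Hölder with exponents $(s,s,s')$ the triple integral $\int_0^1\int_0^r (e^{\pi u_1^2(r)}-1)(e^{\pi u_2^2(\rho)}-1)\log\frac1r\,d\rho\,dr$ is finite and bounded uniformly on $E\times E$; the denominators are then not even needed for this term, and they are only invoked to handle the mixed terms $1\cdot(e^{\pi u_2^2}-1)$ and $(e^{\pi u_1^2}-1)\cdot 1$, where one of the two integrals has no decay in $\rho$ and the $(1+|u|)^{-\gamma_i}$ factor with $\gamma_i$ possibly less than $1$ must be combined with the constraint $\gamma_1+\gamma_2\ge 2$ — this is exactly the balancing that requires care, and I would handle it by noting that at least one of $\gamma_1,\gamma_2$ is $\ge 1$, which suffices to control the corresponding mixed term, and symmetrically for the other.
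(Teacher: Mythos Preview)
Your proposal has a genuine gap at the step you yourself flagged as the ``cleanest route.'' You claim that $e^{\pi u_i^2}-1\in L^s(I)$ for every $s<\infty$ \emph{with a bound uniform on $E$}, and then want to control
\[
\int_0^1 (e^{\pi u_1^2(r)}-1)\,\log\frac{1}{r}\int_0^r (e^{\pi u_2^2(\rho)}-1)\,d\rho\,dr
\]
by H\"older. But the uniform $L^s$ bound is false for every $s>1$: by \eqref{eq:1.1} one only has $\sup_{E}\int_I e^{\alpha u^2}\,dx<\infty$ for $\alpha\le\pi$, and $(e^{\pi u^2}-1)^s$ behaves like $e^{s\pi u^2}$, which is supercritical. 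Even the weaker claim that $\int_0^1(e^{\pi u^2}-1)\log\frac{1}{r}\,dr$ is uniformly bounded on $E$ already fails. Take the Moser-type sequence $w_n$ constructed in Proposition~\ref{pro:3.3}: on $[0,1/n]$ one has $e^{\pi w_n^2}\sim c\,n$, so
\[
\int_0^{1/n}(e^{\pi w_n^2}-1)\log\frac{1}{r}\,dr\;\gtrsim\; n\int_0^{1/n}\log\frac{1}{r}\,dr\;=\;\log n+1\;\longrightarrow\;\infty.
\]
Thus the logarithmic weight cannot be absorbed by any supercritical $L^s$ integrability, and your decoupling via $\log\frac1r\le C_\varepsilon r^{-\varepsilon}$ or H\"older in the product measure runs into the same obstruction. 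Your earlier attempt in Step~(3) to bound $\frac{1}{(1+|u_1|)^{\gamma_1}(1+|u_2|)^{\gamma_2}}\le \frac{C}{1+u_1^2+u_2^2}$ is also false in general (take $\gamma_1=2$, $\gamma_2=0$, $u_1=0$, $u_2$ large), so that route does not rescue the argument either. Incidentally, the mixed terms $1\cdot(e^{\pi u_2^2}-1)$ are the \emph{easy} ones, since $\int_0^1\log\frac1r\,dr<\infty$ and the inner integral is bounded by Trudinger--Moser; it is the double-nonlinear term that carries all the difficulty.

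The paper's proof is built around exactly this obstacle. For a fixed small $\varepsilon_0\in(0,\frac{1}{2\pi})$ it splits $(0,1]$ into
\[
A_i^+=\bigl\{r: |u_i(r)|\ge \sqrt{\varepsilon_0(-\log r)}\bigr\},\qquad A_i^-=(0,1]\setminus A_i^+.
\]
On $A_i^-$ the exponential is genuinely subcritical, $e^{\pi u_i^2(r)}\le r^{-\pi\varepsilon_0}$, and direct integration handles those pieces. On $A_i^+$ one has $(1+|u_i(r)|)^{\gamma_i}\ge (1+\sqrt{\varepsilon_0(-\log r)})^{\gamma_i}$, and the hypothesis $\gamma_1+\gamma_2\ge 2$ enters precisely through
\[
\frac{-\log r}{(1+\sqrt{\varepsilon_0(-\log r)})^{\gamma_1+\gamma_2}}\;\le\;\frac{1}{\varepsilon_0},
\]
which exactly cancels the $\log\frac{1}{r}$ weight and reduces the $A_1^+\times A_2^+$ contribution to a product of two critical Trudinger--Moser integrals. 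The four resulting cross-terms are then each bounded by a combination of direct integration and \eqref{eq:1.2}. This level-set comparison of $|u_i|$ against $\sqrt{-\log r}$ is the missing idea in your proposal; without it, the denominator gain $(1+|u_i|)^{-\gamma_i}$ is never linked to the kernel singularity $\log\frac1r$, and the argument cannot close at the critical exponent.
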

\begin{proof}
    Fix $\varepsilon_0 \in (0, \frac{1}{2\pi})$, for arbitrary $u_1, u_2 \in E$, we define the sets
$$
A_i^{+}:=\left\{r \in(0,1]: u_i(r) \geq \sqrt{\varepsilon_0(-\log r)}\right\},$$  $$A_i^{-}:=\left\{r \in(0,1]: u_i(r)<\sqrt{\varepsilon_0(-\log r)}\right\}
$$
for $i=1,2$. Introducing $v_i:=\frac{ e^{ \pi u_i^2}}{\left(1+\left|u_i\right|\right)^{\gamma_i}}$ we see
\begin{equation}\label{eq:2.18}
    v_i(r) \leq \frac{e^{\pi u_i^2(r)}}{(1+\sqrt{\varepsilon_0(-\log r)})^{\gamma_i}} \quad \text { for } r \in A_i^{+},
\end{equation}
and
\begin{equation}\label{eq:2.17}
    v_i(r) \leq e^{ \pi u_i^2(r)} \leq r^{- \pi \varepsilon_0} \quad \text { for } r \in A_i^{-}.
\end{equation}
Decompose $\Phi_{\gamma_1, \gamma_2}$ into four terms:
$$
\begin{aligned}
& \Phi_{\gamma_1, \gamma_2}\left(u_1, u_2\right)\\
=&\int_{A_1^{+}} v_1(r) \log \frac{1}{r} \int_{A_2^{+} \cap[0, r]} v_2(\rho) d \rho d r+\int_{A_1^{+}} v_1(r) \log \frac{1}{r} \int_{A_2^{-} \cap[0, r]} v_2(\rho) d \rho d r\\&+\int_{A_1^{-}} v_1(r) \log \frac{1}{r} \int_{A_2^{-} \cap[0, r]} v_2(\rho) d \rho d r
 +\int_{A_1^{-}} v_1(r) \log \frac{1}{r} \int_{A_2^{+} \cap[0, r]} v_2(\rho) d \rho d r.
\end{aligned}
$$
  By \eqref{eq:2.17}, we have
$$
\begin{aligned}
\int_{A_1^{-}} v_1(r) \log \frac{1}{r} \int_{A_2^{-} \cap[0, r]} v_2(\rho) d \rho d r  \leq &\int_{A_1^{-}} r^{- \pi \varepsilon_0} \log \frac{1}{r} \int_{A_2^{-} \cap[0, r]} \rho^{-\pi \varepsilon_0} d \rho d r \\
 \leq& \frac{1}{1-\pi \varepsilon_0} \int_0^1 r^{1-2 \pi \varepsilon_0} \log \frac{1}{r} d r\\
 <&C_1,
\end{aligned}
$$
and by $\gamma_1 + \gamma_2 \geq 2$ and the fractional Trudinger-Moser inequality \eqref{eq:1.2},
$$
\begin{aligned}
& \int_{A_1^{+}} v_1(r) \log \frac{1}{r} \int_{A_2^{+} \cap[0, r]} v_2(\rho) d \rho d r \\
 \leq &\int_{A_1^{+}} \frac{e^{\pi u_1^2(r)}}{(1+\sqrt{\varepsilon_0(-\log r)})^{\gamma_1}} \log \frac{1}{r} \int_{A_2^{+} \cap[0, r]} \frac{e^{\pi u_2^2(\rho)}}{(1+\sqrt{\varepsilon_0(-\log \rho)})^{\gamma_2}} d \rho d r\\
 \leq &\int_0^1(-\log r) \frac{e^{\pi u_1^2(r)}}{(1+\sqrt{\varepsilon_0(-\log r)})^{\gamma_1+\gamma_2}} \int_0^r \rho e^{\pi u_2^2(\rho)} d \rho d r \\
 \leq &\frac{1}{\varepsilon_0}\left(\int_0^1 e^{\pi u_1^2(r)} d r\right)\left(\int_0^1 \rho e^{\pi u_2^2(\rho)} d \right)\\ <&C_2,
\end{aligned}
$$
where $C_1$ and $C_2$ are positive constants independent of $u_1, u_2 \in E$.
 We also have
$$
\begin{aligned}
& \int_{A_1^{+}} v_1(r) \log \frac{1}{r} \int_{A_2^{-} \cap[0, r]} v_2(\rho) d \rho d r \\ \leq &\int_0^1 \frac{ e^{\pi u_1^2(r)} }{(1+\sqrt{\varepsilon_0(-\log r)})^{\gamma_1}}\log \frac{1}{r} \int_0^r \rho^{-\pi \varepsilon_0} d \rho d r \\ \leq &\frac{1}{1-\pi \varepsilon_0} \int_0^1 r^{1-\pi \varepsilon_0}\left(\log \frac{1}{r}\right) e^{\pi u_1^2(r)} d r \\ \leq &C_3 \int_0^1 e^{\pi u_1^2(r)} d r \leq C_4,
\end{aligned}
$$
and
$$
\begin{aligned}
& \int_{A_1^{-}} v_1(r) \log \frac{1}{r} \int_{A_2^{+} \cap[0, r]} v_2(\rho) d \rho d r \\ \leq &\int_0^1 r^{-\pi \varepsilon_0} \log \frac{1}{r} \int_0^r e^{\pi u_2^2(\rho)} d \rho d r \\
 \leq &\int_0^1 r^{-\pi \varepsilon_0} \log \frac{1}{r} d r \int_0^1 e^{\pi u_2^2(\rho)} d \rho \\ \leq &C_5 \int_0^1 r^{-\pi \varepsilon_0} \log \frac{1}{r} d r \leq C_6,
\end{aligned}
$$
again via the fractional Trudinger-Moser inequality (\ref{eq:1.2}). Here $C_3,\cdot\cdot\cdot,C_6$ are also positive constants independent of $u_1, u_2 \in E$.
This establishes boundedness, thus proof is finished.
\end{proof}

\section{Maximization problem on the interval  }
This section concludes the proof of Theorem \ref{th:1.2}.  We work within the space $E := \{u \in W^{\frac{1}{2},2}_0(I) : \|- \Delta^{\frac{1}{4}} u\|_{L^2(I)} \leq 1\}$.
\begin{lemma}\label{le:3.1}
    Let $G$ satisfies hypothesis  $(A)$ and has at most $0$-critical growth. Then the functional $\Phi_{-}$ is uniformly bounded on $E$.
\end{lemma}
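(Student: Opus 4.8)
The plan is to exploit the boundedness of the interval $I$ to strip the logarithmic kernel from $\Phi_-$, reducing the estimate to a power of the exponential integral controlled by the fractional Trudinger--Moser inequality \eqref{eq:1.1}.

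First I would observe that since $I$ is a bounded interval, say $I\subset(-R,R)$ with $R>0$, any $x,y\in I$ satisfy $|x-y|\le 2R$, hence
$$\log^{+}|x-y|\le \log^{+}(2R)=:\kappa_I<\infty,$$
a constant depending only on $I$. Using the at most $0$-critical growth assumption on $G$, i.e. $0\le G(s)\le c\,e^{\pi s^2}$ for all $s\in\mathbb{R}$, together with $(A)$, I then bound, for every $u\in E$,
$$\Phi_-(u)=\int_I\int_I\log^{+}|x-y|\,G(u(x))G(u(y))\,dx\,dy\le \kappa_I\Bigl(\int_I G(u(x))\,dx\Bigr)^2\le \kappa_I\,c^2\Bigl(\int_I e^{\pi u^2}\,dx\Bigr)^2.$$

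Next I would control $\int_I e^{\pi u^2}\,dx$ uniformly over $u\in E$. Writing $\int_I e^{\pi u^2}\,dx=|I|+\int_I\bigl(e^{\pi u^2}-1\bigr)\,dx$ and recalling that every $u\in E$ satisfies $\|(-\Delta)^{\frac14}u\|_{L^2(I)}^2\le1$, the fractional Trudinger--Moser inequality \eqref{eq:1.1} with $\alpha=\pi$ yields $\int_I\bigl(e^{\pi u^2}-1\bigr)\,dx\le C|I|$ with $C>0$ independent of $u$. Hence $\int_I e^{\pi u^2}\,dx\le(1+C)|I|$ for all $u\in E$. Combining this with the previous display gives $\Phi_-(u)\le \kappa_I\,c^2(1+C)^2|I|^2$ for all $u\in E$, the desired uniform bound.

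The argument is essentially immediate, so there is no genuine obstacle; the only point worth stressing is that the sign-definiteness of $\log^{+}$ on the bounded set $I\times I$ is precisely what allows the kernel to be discarded, so the nonlocality of $(-\Delta)^{\frac14}$ enters only through the already-established inequality \eqref{eq:1.1}. (A cautious remark: if one instead wanted the analogous statement for $\Phi_+$ or on the whole line, the kernel $\log^{+}\frac1{|x-y|}$ is singular and this crude estimate fails, which is why Lemma \ref{le:2.10} and the rearrangement machinery are needed there; for $\Phi_-$ on a bounded interval nothing of the sort is required.)
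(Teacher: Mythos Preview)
Your proof is correct and follows essentially the same route as the paper's: bound the kernel $\log^{+}|x-y|$ by a constant on $I\times I$, reduce $\Phi_-(u)$ to (a constant times) $\bigl(\int_I G(u)\,dx\bigr)^2$, and then invoke the fractional Trudinger--Moser inequality. The only cosmetic difference is that the paper quotes the prepackaged estimate \eqref{eq:2.6} (which goes through the $|\cdot|_*$-norm) to get the constant $2\log 2$, whereas you bound $\log^{+}|x-y|\le\log^{+}(2R)$ directly; and the paper cites \eqref{eq:1.2} with $\alpha=0$ while you cite \eqref{eq:1.1}, which are equivalent for this purpose.
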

\begin{proof}
     Consider $u \in E$. Utilizing definition and \eqref{eq:2.6} we have
$$
\Phi_{-}(u)=\int_{\mathbb{R}}\int_{\mathbb{R}}\log^+|x-y|~1_{I}G(u(x))1_{I}G(u(y))dxdy \leq(2 \log 2)\left|1_{I} G(u)\right|_1^2,
$$
the $L^1$-norm admits the bound
\begin{equation*}
\left\| \mathbf{1}_I G(u) \right\|_{L^1(\mathbb{R})}
= \int_I |G(u(x))|  dx
\leq c_1 \int_I e^{\pi u(x)^2}  dx.
\end{equation*}
By the fractional Trudinger-Moser inequality \eqref{eq:1.2}, there exists $c(I) > 0$ such that
\begin{equation*}
\sup_{u \in E} \int_I e^{\pi u(x)^2}  dx \leq c(I).
\end{equation*}
Consequently,
\begin{equation*}
\Phi_{-}(u) \leq (2 \log 2) (c_1 c(I))^2,
\end{equation*}
where the right-hand side is independent of $u \in E$. This establishes the uniform boundedness of $\Phi_{-}$ on $E$.
\end{proof}

\begin{proposition}\label{pro:3.2}
   Assume $G$ satisfies condition $(A)$ and has at most $\gamma$-critical growth for some $\gamma \geq 1$.
    Then we have
$$
m(G) \leq m^{+}(G)<\infty, \quad \text { where } ~ m(G):=\sup_{E} \Phi ~ \text { and } ~ m^{+}(G):=\sup _{E} \Phi^{+} .
$$
\end{proposition}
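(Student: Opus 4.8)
The plan is to prove the two assertions separately, writing $\Phi^+:=\Phi_+$ for the positive--kernel functional defined above. The bound $m(G)\le m^+(G)$ is immediate: since $\log^+|x-y|\ge0$ and $G\ge0$ we have $\Phi_-(u)\ge0$, so $\Phi(u)=\Phi^+(u)-\Phi_-(u)\le\Phi^+(u)$ for every $u\in E$, and taking suprema gives $m(G)\le m^+(G)$. Hence the real task is to show $m^+(G)=\sup_E\Phi^+<\infty$.

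First I would reduce to radially symmetric, non-increasing competitors. Because the kernel depends only on $x-y$, $\Phi^+$ is translation invariant, so we may assume $I$ is centered at the origin. Given $u\in E$, extend it by zero, let $u^*$ be its Schwarz symmetric rearrangement, and note $u^*\in E$ by Lemma~\ref{le:2.4}. Since $G$ is even, continuous and strictly increasing on $[0,\infty)$, the functions $1_I G(u)$ and $1_I G(u^*)$ are equimeasurable, so $\bigl(1_I G(u)\bigr)^*=1_I G(u^*)$; then Lemma~\ref{le:2.3}(i) yields $\Phi^+(u)\le\Phi^+(u^*)$. Therefore it suffices to bound $\Phi^+(u)$ uniformly over radially non-increasing $u\in E$.

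For such $u$ I would use the elementary identity $\log^+\frac1{|t|}=\log\frac1{|t|}+\log^+|t|$ ($t\neq0$) together with Corollary~\ref{cor:2.6} to get the finite decomposition $\Phi^+(u)=\Phi(u)+\Phi_-(u)$. As $\gamma\ge1\ge0$, $G$ has at most $0$-critical growth, so Lemma~\ref{le:3.1} gives a uniform bound on $\Phi_-$ over $E$. For $\Phi(u)$, apply Corollary~\ref{cor:2.8}(ii) with $v=1_I G(u)$ (even, non-increasing, with the required finiteness from Corollary~\ref{cor:2.6}) to obtain
$$\Phi(u)=2\int_0^{\infty}G(u(r))\log\frac1r\int_0^{r}G(u(\rho))\,d\rho\,dr.$$
Since $\log\frac1r\le0$ for $r\ge1$ and $G\ge0$, the part $r\ge1$ contributes non-positively, so $\Phi(u)\le 2\int_0^1 G(u(r))\log\frac1r\int_0^r G(u(\rho))\,d\rho\,dr$; inserting the $\gamma$-critical-growth bound $G(u)\le c\,e^{\pi u^2}(1+|u|)^{-\gamma}$ (permissible because $\log\frac1r>0$ on $(0,1)$) gives $\Phi(u)\le 2c^2\,\Phi_{\gamma,\gamma}(u,u)$, which is uniformly bounded on $E$ by Lemma~\ref{le:2.10} since $\gamma+\gamma\ge2$. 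Adding the two bounds yields $m^+(G)<\infty$.

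The hard analytic work is already contained in Lemma~\ref{le:2.10} (the delicate splitting into the regions $A_i^\pm$) and in Lemma~\ref{le:3.1}, so the present argument is essentially bookkeeping; the two points that need genuine care are the symmetrization step — one must verify the equimeasurability of $1_I G(u)$ and $1_I G(u^*)$ so that rearranging does not spoil the estimate — and the sign analysis that allows discarding the $r\ge1$ tail of the radial integral before matching it with $\Phi_{\gamma,\gamma}$.
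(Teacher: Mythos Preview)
Your proposal is correct and follows essentially the same route as the paper: bound $\Phi(u)$ via Corollary~\ref{cor:2.8}(ii) and Lemma~\ref{le:2.10}, then add the uniform bound on $\Phi_-$ from Lemma~\ref{le:3.1} to control $\Phi^+$. The only noteworthy difference is that you make the symmetrization step explicit (reducing to even, radially non-increasing $u$ before invoking Corollary~\ref{cor:2.8}(ii), which requires the integrand to be even), whereas the paper applies that corollary directly to $v=1_I G(u)$ for ``arbitrary $u\in E$'' without spelling out this reduction; your version is in that sense slightly more careful, and the paper uses $\Phi_{1,1}$ while you use $\Phi_{\gamma,\gamma}$, but both satisfy the hypothesis $\gamma_1+\gamma_2\ge2$ of Lemma~\ref{le:2.10}.
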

\begin{proof}
The growth assumption implies
\begin{equation}\label{eq:3.2}
    G(s) \leq c_1 \frac{e^{ \pi s^2}}{1+|s|} \quad \text { for every } s \in \mathbb{R}
\end{equation}
for some $c_1 > 0$. For arbitrary $u \in E$, define $v := \mathbf{1}_I G(u)$.
Corollary~\ref{cor:2.6} yields $$\int_{\mathbb{R}}\int_{\mathbb{R}}\log^+\frac{1}{|x-y|}~v(x)v(y)dxdy<\infty,~~\int_{\mathbb{R}}\int_{\mathbb{R}}\log^+|x-y|~v(x)v(y)dxdy<\infty.$$ Applying Corollary~\ref{cor:2.8}-(ii) and the growth bound~\eqref{eq:3.2}, we obtain
\begin{align*}
\Phi(u)
&= \iint_{\mathbb{R}^2} \log \frac{1}{|x-y|} v(x)v(y)  dxdy \\
&= 2 \int_0^1 v(r) \log \frac{1}{r} \left( \int_0^r v(\rho)  d\rho \right) dr \\
&\leq c_2 := c_1^2 \sup_{u \in E} \Phi_{1,1}(u,u) < \infty.
\end{align*}
The finiteness of $c_2$ follows from Lemma~\ref{le:2.10} with $\gamma_1 = \gamma_2 = 1$. Combining with Lemma~\ref{le:3.1}, the uniform boundedness of $\Phi_{-}$, we derive
\begin{equation}\label{3.3}
    \Phi_{+}(u) = \Phi(u) + \Phi_{-}(u) \leq c_3 := 2c_2 + \sup_{E} \Phi_{-} < \infty
\end{equation}
for all $u \in E$. Consequently, $m^{+}(G) \leq c_3 < \infty$, which implies $m(G) \leq m^{+}(G) < \infty$.
\end{proof}

The following proposition shows the critical growth exponent $\gamma = 1$ is optimal for ensuring $m(G) < \infty$.
\begin{proposition}\label{pro:3.3}
    Suppose that $G$ satisfies $(A)$ and has at least $\gamma$-critical growth for some $\gamma<1$. Then there exists a sequence of functions $w_n \in E \cap L^{\infty}\left(I\right)$ with
    $$\lim_{n \to \infty} \Phi(w_n) = \infty.$$
\end{proposition}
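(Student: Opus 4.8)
The plan is to construct a fractional analogue of the Moser sequence adapted to the interval $I$ (which we may take to be $(-1,1)$), use Lemma \ref{le:2.3} together with Lemma \ref{le:2.4} to reduce to radially decreasing competitors, and then exploit the $\log\frac{1}{|x-y|}$ kernel's logarithmic singularity together with the at-least-$\gamma$-critical growth of $G$ to drive $\Phi(w_n)\to\infty$ when $\gamma<1$. Concretely, I would take the truncated logarithmic profile
\begin{equation*}
\widetilde w_n(x)=\frac{1}{\sqrt{\pi}}\cdot
\begin{cases}
\sqrt{\log n}, & |x|\le \tfrac1n,\\[2pt]
\dfrac{\log\frac{1}{|x|}}{\sqrt{\log n}}, & \tfrac1n\le |x|\le 1,\\[4pt]
0, & |x|\ge 1,
\end{cases}
\end{equation*}
which is the natural candidate since on $|x|\le 1/n$ it satisfies $\pi\widetilde w_n^2=\log n$, so $e^{\pi\widetilde w_n^2}=n$ on a set of measure $2/n$. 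The first step is to compute $\|(-\Delta)^{1/4}\widetilde w_n\|_{L^2}^2=\tfrac{1}{2}C_{1,1/4}^{-1}[\widetilde w_n]_{W^{1/2,2}(\mathbb R)}^2$ and show it equals $1+o(1)$ (or is bounded by $1+o(1)$) as $n\to\infty$; then rescaling $w_n:=\widetilde w_n/\|(-\Delta)^{1/4}\widetilde w_n\|_{L^2}$ gives a sequence in $E\cap L^\infty(I)$.

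The second step is the lower bound on $\Phi$. Using Lemma \ref{le:2.3}(i)--(iii) we may replace $G(w_n)$ by its symmetric decreasing rearrangement, and since $w_n$ is already even and radially decreasing and $G$ is even and increasing on $[0,\infty)$, $G(w_n)=G(w_n)^*$ up to the subtraction of $G(0)$; in any case $\Phi(w_n)\ge \Phi_+(w_n)-\sup_E\Phi_-$, and $\Phi_-$ is uniformly bounded on $E$ by Lemma \ref{le:3.1} applied with the at-most-$0$-critical-growth bound $G(s)\le ce^{\pi s^2}$ coming from $(A)$. So it suffices to show $\Phi_+(w_n)\to\infty$. By Corollary \ref{cor:2.8}(ii),
\begin{equation*}
\Phi_+(w_n)\ge \Phi(w_n)=2\int_0^1 G(w_n(r))\log\frac1r\Big(\int_0^r G(w_n(\rho))\,d\rho\Big)dr,
\end{equation*}
and restricting both integrals to $[0,1/n]$, where $w_n(r)\ge \tfrac{1}{\sqrt\pi}\sqrt{\log n}\to\infty$, the at-least-$\gamma$-critical growth hypothesis gives $G(w_n(r))\ge c\, e^{\pi w_n(r)^2}w_n(r)^{-\gamma}=c\, n^{1/\|(-\Delta)^{1/4}\widetilde w_n\|^2}\,(\log n)^{-\gamma/2}\pi^{\gamma/2}$ for $n$ large. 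Hence
\begin{equation*}
\Phi_+(w_n)\ge 2c^2\pi^{\gamma}(\log n)^{-\gamma}\, n^{2\theta_n}\int_0^{1/n}\log\frac1r\Big(\int_0^r d\rho\Big)dr
= 2c^2\pi^{\gamma}(\log n)^{-\gamma}\, n^{2\theta_n}\cdot\frac{1+2\log n}{4n^2},
\end{equation*}
where $\theta_n:=1/\|(-\Delta)^{1/4}\widetilde w_n\|_{L^2}^2\to 1$; since $\theta_n\to1$ we have $n^{2\theta_n}/n^2\to 1$ (more precisely $n^{2\theta_n-2}=n^{o(1)}\to1$ if the norm is $1+o(1)$, or at worst is bounded below by a positive power if the norm is only $\le 1+\varepsilon$), so $\Phi_+(w_n)\gtrsim (\log n)^{1-\gamma}\to\infty$ precisely because $\gamma<1$.

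The main obstacle is the first step: verifying that the Gagliardo seminorm $[\widetilde w_n]_{W^{1/2,2}(\mathbb R)}^2$ has the right normalization, which is exactly the nonlocal difficulty flagged in the introduction. Unlike the local Dirichlet energy $\int|\nabla u|^2$ of the classical Moser sequence, the double integral $\int_{\mathbb R}\int_{\mathbb R}\frac{(\widetilde w_n(x)-\widetilde w_n(y))^2}{|x-y|^2}\,dx\,dy$ picks up cross-contributions between the three regions $|x|\le1/n$, $1/n\le|x|\le1$, $|x|\ge1$, including the "tail" region $|x|\ge1$ where $\widetilde w_n$ vanishes but the integrand does not. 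I would split the double integral accordingly, evaluate the dominant piece (the one over $1/n\le|x|,|y|\le1$ coming from the logarithmic profile) which should contribute $\tfrac{2}{\pi}\log n\cdot(1+o(1))\cdot\tfrac12 C_{1,1/4}$-worth, i.e. exactly the leading $1$ after normalization, and show the remaining pieces are $O(1)$ — hence lower order after dividing by $\log n$ — using the decay of the kernel and elementary estimates like $\log\frac1{|x|}\le C/|x|^{s}$. This is the "extensive calculations and very precise asymptotic expansion estimates" promised; the rest of the argument is soft once the normalization is in hand.
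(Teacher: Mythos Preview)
Your approach is essentially the same as the paper's: construct the truncated logarithmic Moser sequence, compute the Gagliardo seminorm by splitting the double integral over the three annular regions, and extract the $(\log n)^{1-\gamma}$ lower bound on $\Phi$ from the plateau region $|x|\le 1/n$. A few points deserve correction, though.

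First, a genuine gap: your claim ``$n^{2\theta_n-2}=n^{o(1)}\to 1$ if the norm is $1+o(1)$'' is false. If for instance $\|(-\Delta)^{1/4}\widetilde w_n\|_{L^2}^2=1+(\log n)^{-1/2}$, then $\theta_n-1\sim -(\log n)^{-1/2}$ and $n^{2\theta_n-2}=e^{-2\sqrt{\log n}}\to 0$, which kills the $(\log n)^{1-\gamma}$ factor. What you actually need is the sharper statement that the off-diagonal pieces of the Gagliardo integral are $O(1)$ (not merely $o(\log n)$), so that $\theta_n=1+O(1/\log n)$ and hence $n^{2\theta_n-2}=e^{O(1)}$ stays bounded away from zero. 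You do say you will prove the cross terms are $O(1)$, but you should recognise that this precision is \emph{essential}, not cosmetic. The paper sidesteps the rescaling issue altogether by building a damping factor $(1-1/\log n)^{1/2}$ directly into the amplitude $A_n$; this guarantees $w_n\in E$ for large $n$ without rescaling, and on the plateau one has exactly $e^{\pi w_n^2}=e^{\log n-1}=n/e$, so the lower bound $\Phi(w_n)\gtrsim(\log n)^{1-\gamma}$ falls out cleanly.

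Second, your detour through $\Phi_+$ and $\Phi_-$ is unnecessary and partly unjustified: Lemma~\ref{le:3.1} requires at-most-$0$-critical growth, which is \emph{not} implied by assumption~$(A)$ alone (the $\alpha$ there may exceed $\pi$). But this does no harm, since you end up lower-bounding $\Phi(w_n)$ directly via Corollary~\ref{cor:2.8}(ii), exactly as the paper does; the integrand $G(w_n(r))\log\frac1r\int_0^r G(w_n(\rho))\,d\rho$ is nonnegative on $(0,1)$, so restricting both integrals to $(0,1/n)$ already gives the required lower bound without any appeal to $\Phi_\pm$. (Also, the relevant constant in Proposition~\ref{A.1} is $C_{1,1/2}$, not $C_{1,1/4}$, since $(-\Delta)^{1/4}$ corresponds to $s=1/2$ there.)
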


\begin{proof}
     The assumption implies the existence of a constant $c_1>0$ with
\begin{equation}\label{eq:3.4}
    G(s) \geq \frac{ e^{ \pi s^2}}{c_1s^{\gamma}} \quad \text { for } s \geq c_1.
\end{equation}
For $n \in \mathbb{N}$, $n \geq 2$, we now define $w_n $ as
$$w_n(x)=
\left\{\begin{aligned}
&0,&|x|\geq1,\\
&A_n\log (\frac{1}{|x|}), &\frac{1}{n} \leq|x| \leq 1,\\
&A_n\log n, &\quad |x| \leq \frac{1}{n},
\end{aligned}\right.
$$
where
$$
A_n=\frac{1}{\sqrt{ \pi}} \frac{1}{(\log n)^{1 / 2}}\left(1-\frac{1}{\log n}\right)^{1 / 2}.
$$ Obviously, $w_n \in L^{\infty}\left(I\right)$.

Next, we show $w_n \in H_0^\frac{1}{2}\left(I\right)$ and $[w_n]_{H^{\frac{1}{2}}(\mathbb{R})}\leq1$ for $n$ sufficiently large.
To this end, we need calculate the following integral
$$\begin{aligned}
&\int_{\mathbb{R}}\int_{\mathbb{R}} \frac{(w_n(x)-w_n(y))^2}{(x-y)^2} d ydx\\
=&\int_{|x|\geq1}\int_{\mathbb{R}} \frac{(w_n(x)-w_n(y))^2}{(x-y)^2} d ydx+\int_{\frac{1}{n}\leq|x|\leq1}\int_{\mathbb{R}} \frac{(w_n(x)-w_n(y))^2}{(x-y)^2} d ydx\\
&+\int_{|x|\leq1}\int_{\mathbb{R}} \frac{(w_n(x)-w_n(y))^2}{(x-y)^2} d ydx\\
:=&I_1+I_2+I_3.
\end{aligned}$$
For $I_1$, we have
$$
\begin{aligned}
I_1=&\int_{|x|\geq1}\int_{\mathbb{R}} \frac{(w_n(x)-w_n(y))^2}{(x-y)^2} d ydx\\
=&
\int_{|x|\geq1}\int_{\frac{1}{n}\leq|y|\leq1} \frac{A_n^2\left(\log (\frac{1}{|y|})\right)^2}{(x-y)^2} d ydx+
\int_{|x|\geq1}\int_{|y|\leq\frac{1}{n}} \frac{A_n^2\left(\log n)\right)^2}{(x-y)^2} d ydx\\
:=&A_n^2(I_{12}+I_{13}).
\end{aligned}
$$
For $I_2$,
$$
\begin{aligned}
I_2=&\int_{\frac{1}{n}\leq|x| \leq1}\int_{\mathbb{R}} \frac{(w_n(x)-w_n(y))^2}{(x-y)^2} d ydx\\
=& \int_{\frac{1}{n}\leq|x| \leq1}\int_{|y|\geq1} \frac{(A_n\log (\frac{1}{|x|}))^2}{(x-y)^2} d ydx+
\int_{\frac{1}{n}\leq|x| \leq1}\int_{\frac{1}{n}\leq|y| \leq1} \frac{A_n^2\left(\log (\frac{1}{|x|})-\log (\frac{1}{|y|})\right)^2}{(x-y)^2} d ydx\\
&+
\int_{\frac{1}{n}\leq|x| \leq1}\int_{|y|\leq\frac{1}{n}} \frac{A_n^2\left(\log (\frac{1}{|x|})-\log n\right)^2}{(x-y)^2} d ydx\\
:=&A_n^2(I_{21}+I_{22}+I_{23}).
\end{aligned}
$$
For $I_3$, we estimate
$$
\begin{aligned}
I_3=&\int_{|x| \leq\frac{1}{n}}\int_{\mathbb{R}} \frac{(w_n(x)-w_n(y))^2}{(x-y)^2} d ydx\\
 =&\int_{|x| \leq\frac{1}{n}}\int_{|y|\geq1} \frac{A_n^2(\log n)^2}{(x-y)^2} d ydx+\int_{|x| \leq\frac{1}{n}}
\int_{\frac{1}{n}\leq|y| \leq1} \frac{A_n^2\left(\log n-\log (\frac{1}{|y|})\right)^2}{(x-y)^2} d ydx\\
:=&A_n^2(I_{31}+I_{32}).
\end{aligned}
$$
(i) By calculate,
$$\begin{aligned}
I_{12}=I_{21}&=2\int_1^{+\infty}\int_{\frac{1}{n}}^{1}\frac{(\log \frac{1}{y})^2}{(x-y)^2}dydx+2\int_1^{+\infty}\int_{\frac{1}{n}}^{1}\frac{(\log \frac{1}{y})^2}{(x+y)^2}dydx\\
&=2\int_{\frac{1}{n}}^1\frac{(\log y)^2}{1-y}dy+2\int_{\frac{1}{n}}^1\frac{(\log y)^2}{1+y}dy\\
&=2\int_0^{\log n}\frac{t^2}{e^t-1}dt+2\int_0^{\log n}\frac{t^2}{e^t+1}dt\\
&\leq 2\int_0^{+\infty}\frac{t^2}{e^t-1}dt+2\int_0^{+\infty}\frac{t^2}{e^t+1}dt\\
&\leq C.
\end{aligned}$$
(ii) For $I_{13}$ and $I_{31}$,
$$
\begin{aligned}I_{13}=I_{31}
=&(\log n)^2\cdot 2\int_1^{+\infty}\int_{-\frac{1}{n}}^{\frac{1}{n}}\frac{1}{(x-y)^2}dydx\\
=&(\log n)^2\cdot 2\int_1^{+\infty}\frac{\frac{2}{n}}{x^2-\frac{1}{n^2}}dx\\
=&(\log n)^2\cdot2\log\left(1+\frac{2}{n-1}\right).\end{aligned}$$
(iii) For $I_{22}$,
$$\begin{aligned}I_{22}
=2\int_{\frac{1}{n}}^1\int_{\frac{1}{n}}^1\frac{(\log y-\log x)^2}{(x-y)^2}dydx+2\int_{\frac{1}{n}}^{1}\int_{\frac{1}{n}}^1\frac{(\log y-\log x)^2}{(x+y)^2}dydx.
\end{aligned}$$
 Let $u=\log\frac{1}{x},~v=\log\frac{1}{y}$, and $ t=u-v$, $f(t)=\frac{t^2}{e^t+e^{-t}-2}$, $h(t)=\frac{t^2}{e^t+e^{-t}+2},$
$$\begin{aligned}
I_{22}
=&2\int^{\log n}_0\int^{\log n}_0f(u-v)dvdu+2\int^{\log n}_0\int^{\log n}_0h(u-v)dvdu \\
=&4\int_0^{\log n}(f(t)+h(t))(\log n-t)dt\\
=&4\log n\int_0^{\log n}(f(t)+h(t))dt-4\int_0^{\log n}(f(t)+h(t))tdt.
\end{aligned}$$
It is easy to verify that
the following integrals converge
$$A:=\int_0^{+\infty}(f(t)+h(t))dt<\infty,\quad B:=\int_0^{+\infty}(f(t)+h(t))tdt<\infty.$$
To be more precise, we claim $A=\frac{\pi^2}{4}.$
Indeed, let $w=e^{-t}$, we have
$$\begin{aligned}A=&\int_0^{+\infty}(f(t)+h(t))dt\\
=&\int_0^1\left(\frac{2}{(1-w^2)^2}-\frac{1}{1-w^2}\right)(\log w)^2dw\\
=&\sum_{j=0}^{\infty}\left[(2j+1)\int_0^1w^{2j}(\log w)^2dw\right].
\end{aligned}$$
Here, let $\mu=-\log w$, $s=(2j+1)\mu$, then
$$\int_0^1w^{2j}(\log w)^2dw=\int_0^{+\infty}e^{-(2j+1)\mu} \mu^2d\mu=\frac{1}{(2j+1)^3}\int_0^{+\infty}e^{-s}s^2ds=\frac{\Gamma(3)}{(2j+1)^3}.$$
Hence
$$A=2\sum_{j=0}^{\infty}\frac{1}{(2j+1)^2}=\frac{\pi^2}{4}.$$
Thus
$$\begin{aligned}I_{22}
=&\pi^2\log n -4\log n\int_{\log n}^{+\infty}(f(t)+h(t))dt-4B+4\int_{\log n}^{+\infty}(f(t)+h(t))tdt.
\end{aligned}$$
Now, we estimate the remaining items.
Since $f(t)+h(t)\leq Ct^2 e^{-t}$ for large $t$, we have
$$\log n\int_{\log n}^{+\infty}(f(t)+h(t))dt\leq C\log n \int_{\log n}^{+\infty}t^2 e^{-t}dt=\log n \cdot O\left(\frac{(\log n)^2}{n} \right)=o(1),$$
and
$$\int_{\log n}^{+\infty}(f(t)+h(t))tdt\leq C\log n \int_{\log n}^{+\infty}t^3 e^{-t}dt= O\left(\frac{(\log n)^3}{n}\right)=o(1),$$
as $n\to\infty$.
Therefore, for large $n$
\begin{equation}\label{e0701-e1}
I_{22}\leq \pi^2\log n.
\end{equation}

(iv) Now, we estimate
$$\begin{aligned}
I_{23}=I_{32}=&2\int_{\frac{1}{n}}^1\int_{-\frac{1}{n}}^{\frac{1}{n}} \frac{\left(\log  (nx)\right)^2}{(x-y)^2} d ydx\\
=&\frac{4}{n}\int_{\frac{1}{n}}^1\frac{(\log(nx))^2}{x^2-\frac{1}{n^2}}dx\\
=&4\int_1^n\frac{(\log t)^2}{t^2-1}dt\\
=&4\int_1^2\frac{(\log t)^2}{t^2-1}dt+4\int_2^n\frac{(\log t)^2}{t^2-1}dt\\
\leq&4C+\frac{16}{3}\int_2^n\frac{(\log t)^2}{t^2}dt\\
=&4C+\frac{16}{3}\left(-\frac{(\log n)^2}{n}-\frac{2\log n}{n}-\frac{2}{n}+\frac{(\log 2)^2}{2}+\log (2e)\right)\\
\leq &4C,\quad\text{for}~n~\text{sufficiently~large},
\end{aligned}$$
where,
$$C=\int_1^2\frac{(\log t)^2}{t^2-1}dt=\int_0^1\frac{(\log (1+s))^2}{s(s+2)}ds<\infty$$
since $$\lim\limits_{s\to0}\frac{(\log (1+s))^2 }{s(s+2)}\cdot s^{1/2}=0.$$

In summary, from (i) to (iv), it can be concluded that
$$
\begin{aligned}
\relax[w_n]^2_{H^{\frac{1}{2}}(\mathbb{R})}=
&\int_{{\mathbb{R}}} \int_{\mathbb{R}} \frac{(w_n(x)-w_n(y))^2}{(x-y)^2} d y d x\\
\leq  &\left[C+\pi^2\log n+4(\log n)^2\log\left(1+\frac{2}{n-1}\right)\right] A_n^2\\
=&\left[C+\pi^2\log n+4(\log n)^2\log\left(1+\frac{2}{n-1}\right)\right]\frac{1}{\pi}\frac{1}{\log n}\left(1-\frac{1}{\log n}\right)\\
\to&\pi,\quad\text{as}~n\to\infty.
\end{aligned}
$$
Then by Proposition \ref{A.1}
$$\left|(-\Delta)^\frac{1}{4} w_n\right|^2_{L^2(I)}\leq
\left|(-\Delta)^\frac{1}{4} w_n\right|^2_{L^2(\mathbb{R})}=\frac{1}{2}C_{1,\frac{1}{2}}[w_n]^2_{H^{\frac{1}{2}}(\mathbb{R})}\leq1$$ for $n$ sufficiently large and thus $w_n \in E$. Here $$ C_{1,\frac{1}{2}}= \frac{2 \cdot \frac{1}{2} \cdot \Gamma\left(\frac{1}{2} + \frac{1}{2}\right)}{\pi^{\frac{1}{2}} \cdot \Gamma(1-\frac{1}{2})} = \frac{ \Gamma\left(1\right)}{\pi^{\frac{1}{2}} \cdot \Gamma(\frac{1}{2})}= \frac{1}{\pi} $$ is the normalization constant of the $1/2$-Laplacian.

Now, we prove $\Phi(w_n)\to\infty$. Let $v_n:=$ $G\left(w_n\right) \in L^{\infty}\left(I\right)$ and therefore
$$
\Phi_{ \pm}\left(w_n\right)<\infty \quad \text { for } n \in \mathbb{N}.
$$
By (\ref{eq:3.4}) we have, for $n$  sufficiently large,
$$
v_n \geq c_1^{-1}\left(\frac{\log n}{\pi}-\frac{1}{\pi}\right)^{-\frac{\gamma}{2}} e^{\pi\left(\frac{\log n}{ \pi}-\frac{1}{\pi}\right)} \geq c_2(\log n)^{-\frac{\gamma}{2}} n\quad \text { on } I_{\frac{1}{n}}(0)
$$
with a constant $c_2>0$. We derive that
$$
\begin{aligned}
&\int_{\mathbb{R}}\int_{\mathbb{R}}\log\frac{1}{|x-y|}~v_n(x)v_n(y)dxdy\\ \geq &2 \int_0^{\frac{1}{n}} v_n(r) \log \frac{1}{r} \int_0^r v_n(\rho) d \rho d r \\
\geq &2c_2^2(\log n)^{-\gamma} n^2 \int_0^{\frac{1}{n}} r \log \frac{1}{r} d r\\
 =&-2\left.c_2^2(\log n)^{-\gamma} n^2\left(\frac{r^2}{2} \log r-\frac{r^2}{4}\right)\right|_0 ^{\frac{1}{n}} \\
 \geq &\frac{1}{4} c_2^2(\log n)^{1-\gamma}
\end{aligned}
$$
so that $$\Phi(w_n)=\int_{\mathbb{R}}\int_{\mathbb{R}}\log\frac{1}{|x-y|}~v_n(x)v_n(y)dxdy \rightarrow \infty$$ as $n \rightarrow \infty$ since $\gamma<1$.
\end{proof}

Next we prove the existence of a maximizer $u \in E$ for $m(G)$ under the assumptions of Proposition \ref{pro:3.2}. We first prove the following convergence result.

\begin{proposition}\label{pro:3.4}
    Suppose that $G$ satisfies $(A)$, $G$ has at most $\gamma$-critical growth for some $\gamma >1$, and  $$G(s)-G(0) \leq c \, \left(e^{\pi s^{2}}-1\right)$$ for $s \in \mathbb{R}$ with  some $c>0$ and $\gamma>1$. Let $\{u_n\}$ be a sequence in $E$ with $u_n \rightharpoonup 0$ in $H_0^\frac{1}{2}\left(I\right)$. Then

\begin{equation}\label{eq:3.5}
    \Phi\left(u_n\right) \rightarrow \Phi(0) \quad \text { as } n \rightarrow \infty.
\end{equation}

\end{proposition}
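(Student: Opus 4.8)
The plan is to peel off the constant $G(0)$ and treat the remainder perturbatively through $v_n:=G(u_n)-G(0)\ge 0$ (nonnegative since $G$ is even and increasing on $[0,\infty)$). Expanding $G(u_n(x))G(u_n(y))=v_n(x)v_n(y)+G(0)(v_n(x)+v_n(y))+G(0)^2$ and using Fubini (legitimate since $\Phi_\pm(u_n)<\infty$ by Corollary~\ref{cor:2.6} and $v_n\in L^1(I)$) one obtains
\[
\Phi(u_n)-\Phi(0)=\int_I\int_I\log\tfrac1{|x-y|}\,v_n(x)v_n(y)\,dx\,dy+2G(0)\int_I \psi\,v_n\,dx ,
\]
where $\psi(x):=\int_I\log\tfrac1{|x-y|}\,dy$ is bounded on $I$. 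Since $\big|\log\tfrac1{|x-y|}\big|\le\log^+\tfrac1{|x-y|}+\log^+|I|$ on $I\times I$, and $\int\int\log^+\tfrac1{|x-y|}v_nv_n\le\int\int\log^+\tfrac1{|x-y|}v_n^*v_n^*$ by Lemma~\ref{le:2.3}(i), the claim reduces to
\[
\text{(a)}\ \ |v_n|_1\to0,\qquad\text{(b)}\ \ \int\int\log^+\tfrac1{|x-y|}\,v_n^*(x)v_n^*(y)\,dx\,dy\to0 .
\]
Here $v_n^*=G(u_n^*)-G(0)$ (because $t\mapsto G(t)-G(0)$ is nondecreasing on $[0,\infty)$ and vanishes at $0$), $u_n^*\in E$ is even and radially decreasing (Lemma~\ref{le:2.4}), and $u_n\to0$ strongly in $L^p(I)$ for all $p<\infty$ by compactness of $W^{\frac12,2}_0(I)\hookrightarrow L^p(I)$; in particular $|u_n|_2=|u_n^*|_2\to0$. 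I would set this up first.

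Step (a): truncation and Chebyshev. Fix $M>0$. On $\{|u_n|\le M\}$ the extra hypothesis \eqref{20250704-e1} gives $v_n\le c(e^{\pi u_n^2}-1)\le c\pi e^{\pi M^2}u_n^2$ (using $e^s-1\le se^s$), contributing at most $c\pi e^{\pi M^2}|u_n|_2^2$ to $|v_n|_1$. On $\{|u_n|>M\}$ the $\gamma$-critical growth gives $v_n\le G(u_n)\le c(1+M)^{-\gamma}e^{\pi u_n^2}$, contributing at most $c(1+M)^{-\gamma}\int_I e^{\pi u_n^2}\le c(1+C)|I|(1+M)^{-\gamma}$, where $\sup_{u\in E}\int_I(e^{\pi u^2}-1)\le C|I|$ by \eqref{eq:1.1}. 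Letting $M$ and then $n$ be large gives (a).

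Step (b): By Corollary~\ref{cor:2.8}(ii), $\int\int\log^+\tfrac1{|x-y|}v_n^*v_n^*\le 2\int_0^1 v_n^*(r)\log\tfrac1r\big(\int_0^r v_n^*\big)dr+(2\log2)|v_n^*|_1^2$, and the error vanishes by (a). Let $\sigma_n$ be the radius with $\{u_n^*>M\}=(-\sigma_n,\sigma_n)$, and write $v_n^*=a_n+b_n$ with $a_n:=v_n^*\mathbf 1_{\{u_n^*\le M\}}$ (supported in $\{|x|\ge\sigma_n\}$) and $b_n:=v_n^*\mathbf 1_{\{u_n^*>M\}}$ (supported in $\{|x|<\sigma_n\}$). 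Put $L(f,g):=\int_0^1 f(r)\log\tfrac1r\big(\int_0^r g(\rho)\,d\rho\big)\,dr$; by disjointness of supports $L(b_n,a_n)=0$, so it suffices to kill $L(a_n,a_n)$, $L(a_n,b_n)$ and $L(b_n,b_n)$. For the first two: $a_n\le c\pi e^{\pi M^2}(u_n^*)^2$, $|b_n|_1\le|v_n|_1$ is bounded, and $\int_0^1(u_n^*)^2\log\tfrac1r\,dr\le|u_n^*|_{2r}^2\,\|\log\tfrac1{|\cdot|}\|_{L^{r'}(0,1)}\to0$ (Hölder, any $r>1$), so $L(a_n,a_n)\le\tfrac12|a_n|_1\int_0^1 a_n\log\tfrac1r\,dr\to0$ and $L(a_n,b_n)=\tfrac12|b_n|_1\int_0^1 a_n\log\tfrac1r\,dr\to0$ for each fixed $M$. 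For the last: on $\{u_n^*>M\}$, $b_n\le c\,\dfrac{e^{\pi(u_n^*)^2}}{(1+u_n^*)^\gamma}\le\dfrac{c}{(1+M)^{\gamma-1}}\cdot\dfrac{e^{\pi(u_n^*)^2}}{1+u_n^*}$, hence
\[
L(b_n,b_n)\le\frac{c^2}{(1+M)^{2(\gamma-1)}}\,\Phi_{1,1}(u_n^*,u_n^*)\le\frac{c^2\widetilde C}{(1+M)^{2(\gamma-1)}}
\]
by Lemma~\ref{le:2.10} with $\gamma_1=\gamma_2=1$; since $\gamma>1$ this $\to0$ as $M\to\infty$. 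Choosing $M$ large then $n$ large proves (b), hence the proposition.

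The step I expect to be the main obstacle is the near-origin quadratic contribution $L(b_n,b_n)$. Dominated convergence is unavailable because $\{v_n\}$ need not be uniformly integrable and $\log\tfrac1{|x-y|}$ is unbounded, and, as the introduction explains, the weight $1-r$ coming from $\int_I\log\tfrac1{|x-y|}\,dy$ does not vanish at $r=0$, so the two-dimensional argument of \cite{18} does not transfer. The resolution is to combine the $\gamma$-critical decay with $\gamma>1$ — which on the concentrating set $\{u_n^*>M\}$ produces the genuine gain $(1+M)^{-2(\gamma-1)}\to0$ — with the uniform bound on $\Phi_{1,1}$ from Lemma~\ref{le:2.10} (which holds precisely because $1+1\ge2$) to absorb the logarithmic singularity of the kernel. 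At $\gamma=1$ this gain disappears, consistent with the sharpness in Proposition~\ref{pro:3.3}.
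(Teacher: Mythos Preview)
Your proof is correct and differs from the paper's in two ways. First, you symmetrise via $v_n\mapsto v_n^*$ at the outset, so the argument applies to arbitrary $u_n\in E$; the paper instead tacitly assumes $u_n$ is already even and radially decreasing (it invokes uniform convergence of $u_n$ on $[\delta,1]$). Second, for the quadratic term the paper splits the outer radial integral at a spatial parameter $\delta$ and on $(0,\delta)$ re-runs the $A_n^\pm$ decomposition that underlies Lemma~\ref{le:2.10}, extracting the small factor $(1+\sqrt{-\varepsilon\log\delta})^{2-2\gamma}$; you split by the level set $\{u_n^*\gtrless M\}$ and feed the concentrated part $b_n$ directly into Lemma~\ref{le:2.10} as a black box, gaining $(1+M)^{-2(\gamma-1)}$ --- the same mechanism, but packaged more cleanly. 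For the linear term the paper reduces to $\int_0^1(e^{\pi u_n^2}-1)(1-r)\,dr\to0$ and proves this via a Chebyshev argument with moving truncation $\delta_n=|u_n|_p^\alpha$; your fixed-$M$ truncation (using $e^s-1\le se^s$ below $M$ and the $(1+M)^{-\gamma}$ gain above) is shorter and yields $|v_n|_1\to0$ directly. Two cosmetic points: the $\tfrac12$ factors in your bounds for $L(a_n,a_n)$ and $L(a_n,b_n)$ should be $1$, and the order of limits in Step~(a) should read ``$n\to\infty$ for fixed $M$, then $M\to\infty$''; neither affects correctness.
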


\begin{proof} Without loss of generality, we assume $I=(-1, 1)$.
     Since $u_n \rightharpoonup 0$ in $H_0^\frac{1}{2}\left(I\right)$ and $H_0^\frac{1}{2}\left(I\right)$ is compactly embedded into $L^p\left(I\right)$ for $2<p<\infty$, we have
\begin{equation}\label{eq:3.6}
    u_n \rightarrow 0 \quad \text { in } L^p\left(I\right) \text { for } 2<p<\infty.
\end{equation}
And since $u_n$ are radial and non-increasing in the radial coordinate,
\begin{equation}\label{eq:3.7}
   \begin{aligned}
    u_n \rightarrow 0 \quad~\text{uniformly~in}~[\delta, 1] ~\text{for~ every} ~\delta\in(0,1).
   \end{aligned}
\end{equation}
Denote $\tilde{G}=G-G(0)$, then $\tilde{G}$  satisfies $(A)$, $ \tilde{G}(0)=0$  and
\begin{equation}\label{eq:3.8}
    \tilde{G}(s)\leq c_1\frac{\pi s^2}{(1+|s|)^\gamma},~~ \tilde{G}(s)\leq  c_2\left(e^{\pi s^{2}}-1\right) \leq  c_2e^{\pi s^2}, \quad \text { for }~ s \in \mathbb{R}.
\end{equation}
Let
$$
v_n:=1_{I} \tilde{G}\left(u_n\right) \quad \text {for}~ n \in \mathbb{N},
$$
we then get
\begin{align*}
     &\Phi\left(u_n\right)\\
 =&\int_{\mathbb{R}}\int_{\mathbb{R}}\log\frac{1}{|x-y|}~v_n(x)v_n(y)dxdy+2\int_{\mathbb{R}}\int_{\mathbb{R}}\log\frac{1}{|x-y|}~1_{I} G(0)v_n(y)dxdy\\
&+\int_{\mathbb{R}}\int_{\mathbb{R}}\log\frac{1}{|x-y|}~1_{I} G(0)1_{I}G(0)dxdy\\
=&\int_{\mathbb{R}}\int_{\mathbb{R}}\log\frac{1}{|x-y|}~v_n(x)v_n(y)dxdy+2\int_{\mathbb{R}}\int_{\mathbb{R}}\log\frac{1}{|x-y|}~1_{I}G(0)v_n(y)dxdy+\Phi(0).
\end{align*}
To verify (\ref{eq:3.5}), it is suffice to prove
\begin{equation}\label{eq:3.11}
I_n:=\int_{\mathbb{R}}\int_{\mathbb{R}}\log\frac{1}{|x-y|}~v_n(x)v_n(y)dxdy \rightarrow 0,
\end{equation} and \begin{equation}\label{0704-eq:3.11}
II_n:=\int_{\mathbb{R}}\int_{\mathbb{R}}\log\frac{1}{|x-y|}~1_{I}G(0)v_n(y)dxdy \rightarrow 0,\quad \text { as } n \rightarrow \infty.
\end{equation}

(i) To see (\ref{eq:3.11}), we note that for every $\delta \in(0,1)$ we have
$$\begin{aligned}
I_n
=&2\int_0^1 v_n(r) \log \frac{1}{r} \int_0^r v_n(\rho) d \rho d r\\
=&2\int_\delta^1 v_n(r) \log \frac{1}{r} \int_0^r v_n(\rho) d \rho d r+2\int_0^\delta v_n(r) \log \frac{1}{r} \int_0^r v_n(\rho) d \rho d r\\
:=&2(I_n^{(1)}+I_n^{(2)}).\end{aligned}
$$
By (\ref{eq:1.2}), (\ref{eq:3.8}) and (\ref{eq:3.7})
\begin{equation}\label{eq:3.12}
\begin{aligned}
I_n^{(1)} &  \leq c_1 \int_\delta^1 v_n(r) \log \frac{1}{r} \int_0^1  e^{\pi u_n^2(\rho)} d \rho d r \\
& \leq c_1c(I)\int_\delta^1 v_n(r) \log \frac{1}{r} d r \rightarrow 0 \quad \text { as } n \rightarrow \infty.
\end{aligned}
\end{equation}
To estimate $I_n^{(2)}$
we fix $\varepsilon \in\left(0, \frac{1}{2\pi}\right)$ and define, for any $n \in \mathbb{N}$,
$$
\begin{aligned}
A_n^{+}:=&\left\{r \in(0,1]: u_n(r) \geq \sqrt{\varepsilon(-\log r)}\right\}, \\ A_n^{-}:=&\left\{r \in(0,1]: u_n(r)<\sqrt{\varepsilon(-\log r)}\right\}.
\end{aligned}
$$
By (\ref{eq:3.8}), we have
\begin{equation}\label{eq:3.14}
    v_n(r) \leq c_1 \frac{e^{\pi u_n^2(r)}}{(1+\sqrt{\varepsilon(-\log r)})^\gamma} \quad \text { for } r \in A_n^{+},
\end{equation}
and
\begin{equation}\label{eq:3.13}
    v_n(r) \leq c_1 e^{\pi u_n^2(r)} \leq \frac{c_1}{r^{\pi \varepsilon}} \quad \text { for } r \in A_n^{-}.
\end{equation}
We now write
$$
I_n^{(2)}=\int_{A_n^{-} \cap(0, \delta)} v_n(r) \log \frac{1}{r} \int_0^r v_n(\rho) d \rho d r+\int_{A_n^{+} \cap(0, \delta)} v_n(r) \log \frac{1}{r} \int_0^r v_n(\rho) d \rho d r,
$$
where, by (\ref{eq:1.2}) and \eqref{eq:3.13},
\begin{equation}\label{eq:3.15}
    \begin{aligned}
& \int_{A_n^{-} \cap(0, \delta)} v_n(r) \log \frac{1}{r} \int_0^r v_n(\rho) d \rho d r \\\leq &c_1^2 \int_0^\delta r^{-\pi \varepsilon} \log \frac{1}{r} \int_0^1 e^{\pi u_n^2(\rho)} d \rho d r \\
 \leq &c_1^2c(I) \int_0^\delta r^{-\pi \varepsilon} \log \frac{1}{r} d r\\
=&c_1^2c(I)\left(\frac{\delta^{1-\pi \varepsilon}}{ (1-\pi \varepsilon)^2}-\frac{\delta^{1-\pi \varepsilon} \log \delta}{1-\pi \varepsilon}\right)
\end{aligned}
\end{equation}
for all $n \in \mathbb{N}$, where the constant $c(I)$ is the supremum in the fractional Trudinger-Moser inequality (\ref{eq:1.2}). And
$$
\begin{aligned}
& \int_{A_n^{+} \cap(0, \delta)} v_n(r) \log \frac{1}{r} \int_0^r v_n(\rho) d \rho d r \\
=&\int_{A_n^{+} \cap(0, \delta)} v_n(r) \log \frac{1}{r} \int_{A_n^{+} \cap(0, r)} v_n(\rho) d \rho d r+\int_{A_n^{+} \cap(0, \delta)} v_n(r) \log \frac{1}{r} \int_{A_n^{-} \cap(0, r)} v_n(\rho) d \rho d r,
\end{aligned}
$$
where
\begin{equation}\label{eq:3.16}
    \begin{aligned}
& \int_{A_n^{+} \cap(0, \delta]} v_n(r) \log \frac{1}{r} \int_{A_n^{+} \cap[0, r]} v_n(\rho) d \rho d r \\
 \leq &c_1^2 \int_{A_n^{+} \cap(0, \delta]}  \frac{e^{\pi u_n^2(r)}}{(1+\sqrt{\varepsilon(-\log r)})^\gamma} \log \frac{1}{r} \int_{A_n^{+} \cap[0, r]} \frac{ e^{\pi u_n^2(\rho)}}{(1+\sqrt{\varepsilon(-\log \rho)})^\gamma} d \rho d r \\
 \leq &c_1^2 \int_{A_n^{+} \cap(0, \delta]}(-\log r)(1+\sqrt{\varepsilon(-\log r)})^{ -2\gamma} e^{\pi u_n^2(r)} \int_0^r e^{\pi u_n^2(\rho)} d \rho d r \\
 \leq &\frac{c_1^2}{\varepsilon}(1+\sqrt{\varepsilon(-\log \delta)})^{2-2\gamma}\left(\int_0^1 e^{\pi u_n^2(r)} d r\right)^2 \\
 \leq &\frac{\left(c_1 c\left(I\right)\right)^2}{\varepsilon}\left( \sqrt{\varepsilon(-\log \delta)}\right)^{2-2\gamma}
\end{aligned}
\end{equation}
again by (\ref{eq:1.2}). Here we used the assumption $\gamma>1$. Finally, we have
\begin{equation}\label{eq:3.17}
    \begin{aligned}
& \int_{A_n^{+} \cap[0, \delta]} v_n(r) \log \frac{1}{r} \int_{A_n^{-} \cap[0, r]} v_n(\rho) d \rho d r \\\leq& c_1^2 \int_{A_n^{+} \cap[0, \delta]} e^{\pi u_n^2(r)} \log \frac{1}{r} \int_0^r \rho^{- \pi \varepsilon} d \rho d r \\
 \leq &\frac{c_1^2}{1-\pi \varepsilon} \int_0^\delta r^{1-\pi \varepsilon} e^{\pi u_n^2(r)} \log \frac{1}{r} d r \\
 \leq& c_1^2 C_\delta \int_0^1 e^{\pi u_n^2(r)} d r \leq c_1^2 C_\delta c\left(I\right)
\end{aligned}
\end{equation}
by (\ref{eq:1.2}) with $$C_\delta=\sup _{r \in[0, \delta]} 2r^{1-\pi \varepsilon} \log \frac{1}{r}.$$ Since, as $\varepsilon \in\left(0, \frac{1}{2\pi}\right)$ and $\gamma>1$, the RHS of (\ref{eq:3.15}), (\ref{eq:3.16}), and (\ref{eq:3.17}) tends to zero as $\delta \rightarrow 0^{+}$, we infer that
$$
\lim _{\delta \rightarrow 0} \sup _{n \in \mathbb{N}} I_n^{(2)}=0.
$$
Combining this with (\ref{eq:3.12}), we infer (\ref{eq:3.11}), as claimed.

(ii) We prove \eqref{0704-eq:3.11}, i.e., $II_n\to0.$
By Corollary \ref{cor:2.8}-(i), we have
$$
II_n=\int_{\mathbb{R}}\int_{\mathbb{R}}\log\frac{1}{|x-y|}~1_{I} G(0)v_n(y)dxdy=G(0) \int_0^1 v_n(r) f(r) d r,
$$ where $$f(r)=\log \frac{1}{r} \int_0^r d \rho+\int_r^1 \left(\log \frac{1}{\rho}\right) d \rho=1-r.$$
Then
$$0\leq II_n\leq G(0)\int_0^1 v_n f(r)dr=G(0)\int_0^1\left[G(u_n(r))-G(0)\right](1-r)dr.$$
To show \eqref{0704-eq:3.11}, by the assumption \eqref{20250704-e1} about $G$ it is enough to show
$$D_n:=\int_0^1 \left(e^{\pi u_n^2(r)}-1\right)(1-r)dr\to0.$$
Now, we use the $L^p$
 convergence of $u_n$ to select truncation parameters.
By $|u_n|_p\to0$, we choose $0<\alpha<1$ and define
$$\delta_n:=|u_n|_p^\alpha,\quad S_n:=\{r\in(0,1): |u_n(r)|>\delta_n\}.$$
Then, $D_n$ can be divided into two parts:
$$D_n=\int_{S_n^c}\left(e^{\pi u_n^2(r)}-1\right)(1-r)dr+\int_{S_n}\left(e^{\pi u_n^2(r)}-1\right)(1-r)dr:=D_n^{(1)}+D_n^{(2)}.$$
By Chebyshev inequality,
\begin{equation}\label{Chebyshev}
|S_n|\leq \delta_n^{-p}|u_n|_p^p=|u_n|_p^{p(1-\alpha)}\to0.
\end{equation}

First, we estimate $D_n^{(1)}$. Since $\delta_n=|u_n|_p^\alpha\to0$, we have for $x\in S_n^c$, $|u_n(x)|\leq \delta_n<1$ for large $n$, and $\pi u_n^2\leq\pi \delta_n^2\leq1.$ Hence, by the elementary inequality $e^{ x}-1\leq ex$ for any $x\leq1$,
$$e^{\pi u_n^2(r)}-1\leq  e\pi u_n^2,\quad \text{for}~\pi u_n^2\leq1.$$
 Therefore,
\begin{equation}\label{929}
    D_n^{(1)}\leq c\int_{S_n^c}u_n^2(1-r)dr\leq c'|u_n|_p^2\to0.
\end{equation}

Now, we estimate $D_n^{(2)}$.
For any $M>0$ we define $$T_n(M):=\left\{r: e^{\pi u_n^2(r)}-1>M\right\}.$$
By Chebyshev inequality and fractional Trudinger-Moser inequality \eqref{eq:1.1},
$$|T_n(M)|\leq \frac{1}{M}\int_0^1\left(e^{\pi u_n^2}-1\right)dr\leq\frac{c(I)}{M}.$$
We divide $D_n^{(2)}$ into two parts:
$$D_n^{(2)}=\int_{S_n\cap T_n(M)}\left(e^{\pi u_n^2}-1\right)(1-r)dr+\int_{S_n\setminus T_n(M)}\left(e^{\pi u_n^2}-1\right)(1-r)dr.$$
For $r\in S_n\setminus T_n(M)$, we have
$e^{\pi u_n^2}-1\leq M$, then by \eqref{Chebyshev}
$$\int_{S_n\setminus T_n(M)}\left(e^{\pi u_n^2}-1\right)(1-r)dr\leq M|S_n|\to0.$$
For $r\in S_n\cap T_n(M)$, by fractional Trudinger-Moser inequality \eqref{eq:1.1}
$$\int_{S_n\cap T_n(M)}\left(e^{\pi u_n^2}-1\right)(1-r)dr\leq C|T_n(M)|\leq C\frac{c(I)}{M}.$$
Therefore, by the arbitrariness of $M$, we have $D_n^{(2)}\to0$.
Combing this with \eqref{929}, we get $D_n\to0$ and hence \eqref{0704-eq:3.11}.

Combining \eqref{eq:3.11} in (i) and \eqref{0704-eq:3.11} in (ii), \eqref{eq:3.5} holds.
\end{proof}

\begin{proposition}\label{pro:3.5}
    Suppose that $G$ satisfies $(A)$ and has at most $\gamma$-critical growth with $\gamma>1$. Then the value $m(G)<\infty$ is achieved by a function $u \in E$.
\end{proposition}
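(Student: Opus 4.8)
The plan is to run the direct method with a symmetrization and a concentration–compactness analysis. First I would pick a maximizing sequence $\{u_n\}\subset E$, so $\Phi(u_n)\to m(G)$, which is finite by Proposition~\ref{pro:3.2}. Replacing $u_n$ first by $|u_n|$ (which leaves $\Phi$ unchanged since $G$ is even, and does not increase the seminorm) and then by its Schwarz symmetrization — legitimate because Lemma~\ref{le:2.4} gives $u_n^\ast\in E$, while Lemma~\ref{le:2.3}(iii) gives $\Phi(u_n^\ast)\ge\Phi(u_n)$, the hypothesis of Lemma~\ref{le:2.3}(iii) being supplied by Corollary~\ref{cor:2.6} applied to the symmetrized function — I may assume each $u_n$ is nonnegative, even and radially decreasing, still with $\Phi(u_n)\to m(G)$. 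Since $E$ is bounded in the Hilbert space $H_0^{\frac12}(I)$, after passing to a subsequence $u_n\rightharpoonup u$ weakly; the compact embedding $H_0^{\frac12}(I)\hookrightarrow\hookrightarrow L^p(I)$ for $1\le p<\infty$ gives $u_n\to u$ in every $L^p(I)$ and a.e.\ in $I$, so $u$ is nonnegative, even and radially decreasing, and weak lower semicontinuity yields $\|(-\Delta)^{\frac14}u\|_{L^2}\le1$, i.e.\ $u\in E$.

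Next I would rule out $u\equiv0$. If $u\equiv0$ then $u_n\rightharpoonup0$, and Proposition~\ref{pro:3.4} forces $\Phi(u_n)\to\Phi(0)$, hence $m(G)=\Phi(0)$, contradicting $m(G)>\Phi(0)$ from Remark~\ref{re:2.9}(i). So $u\not\equiv0$; since the Dirichlet eigenvalue $\lambda_1(I)>0$, we have $\kappa:=\|(-\Delta)^{\frac14}u\|_{L^2}^2>0$. Expanding $\|(-\Delta)^{\frac14}(u_n-u)\|_{L^2}^2=\|(-\Delta)^{\frac14}u_n\|_{L^2}^2-2\langle(-\Delta)^{\frac14}u_n,(-\Delta)^{\frac14}u\rangle+\kappa$ and using $u_n\rightharpoonup u$ gives
$$\limsup_{n\to\infty}\big\|(-\Delta)^{\tfrac14}(u_n-u)\big\|_{L^2}^2\le 1-\kappa<1,$$
so, with $v_n:=u_n-u$, we have $\|(-\Delta)^{\frac14}v_n\|_{L^2}^2\le1-\tfrac{\kappa}{2}$ for all large $n$.

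The crucial step is to upgrade this into uniform higher integrability of $e^{\pi u_n^2}$, namely $\sup_n\int_I e^{p\pi u_n^2}\,dx<\infty$ for some $p>1$. Using $u_n^2\le(1+\varepsilon)v_n^2+(1+\tfrac1\varepsilon)u^2$ and Hölder's inequality, this reduces to bounding $\int_I e^{a\,v_n^2}$ for some $a$ slightly bigger than $\pi$ together with $\int_I e^{b\,u^2}$ for some large $b$; the latter is finite by \eqref{eq:2.11} since $u$ is fixed. For the former, rescaling $v_n/\sqrt{1-\kappa/2}$ into the closed unit ball of the seminorm and invoking the sharp fractional Trudinger–Moser inequality \eqref{eq:1.1} gives $\sup_n\int_I e^{a v_n^2}<\infty$ whenever $a\le\pi/(1-\tfrac{\kappa}{2})$, and $\pi/(1-\tfrac\kappa2)>\pi$. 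Choosing $\varepsilon>0$ small, $p>1$ close to $1$, and Hölder exponents close to $1$ so that everything stays below $\pi/(1-\tfrac\kappa2)$ yields the claim. Since $G(u_n)\le c\,e^{\pi u_n^2}$, the sequence $\{G(u_n)\}$ is then bounded in $L^p(I)$ with $p>1$, hence uniformly integrable on the bounded set $I$; as $G(u_n)\to G(u)$ a.e., Vitali's theorem gives $G(u_n)\to G(u)$ in $L^q(I)$ for every $1\le q<p$, in particular in $L^1(I)$.

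It remains to pass to the limit in $\Phi=\Phi_+-\Phi_-$. For $\Phi_-$ the kernel $\log^+|x-y|$ is bounded on $I\times I$, so expanding $G(u_n(x))G(u_n(y))-G(u(x))G(u(y))$ bilinearly and using $L^1$-convergence together with $L^1$-boundedness of $G(u_n)$ gives $\Phi_-(u_n)\to\Phi_-(u)$. For $\Phi_+$, since $\log^+\tfrac1{|\cdot|}\in L^{p'}(\mathbb{R})$, Young's inequality shows $v\mapsto\big(\log^+\tfrac1{|\cdot|}\big)*(1_I v)$ is bounded $L^p(I)\to L^\infty(I)$, so $\big(\log^+\tfrac1{|\cdot|}\big)*(1_I G(u_n))\to\big(\log^+\tfrac1{|\cdot|}\big)*(1_I G(u))$ in $L^\infty(I)$; combining this with $G(u_n)\to G(u)$ in $L^1(I)$ and writing $\Phi_+(u_n)=\int_I\big[\big(\log^+\tfrac1{|\cdot|}\big)*(1_I G(u_n))\big]G(u_n)\,dx$ gives $\Phi_+(u_n)\to\Phi_+(u)$. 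Hence $\Phi(u_n)\to\Phi(u)$, so $\Phi(u)=m(G)$ and $u\in E$ attains $m(G)$. The main obstacle is precisely the uniform $L^p$-bound ($p>1$) on $e^{\pi u_n^2}$: this is where one must exploit the strict loss of mass $\|(-\Delta)^{\frac14}(u_n-u)\|_{L^2}^2<1$ — available only because the weak limit is nonzero, which is secured by the separately proved Proposition~\ref{pro:3.4} — in combination with the sharp constant in \eqref{eq:1.1}.
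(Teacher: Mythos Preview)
Your argument is correct and follows the same overall architecture as the paper's: take a maximizing sequence, rule out the weak limit $u\equiv0$ via Proposition~\ref{pro:3.4} and Remark~\ref{re:2.9}, then use higher integrability of $e^{\pi u_n^2}$ to pass to the limit. The differences are in implementation rather than strategy. Where the paper invokes Lions' concentration--compactness principle as a black box to obtain boundedness of $e^{(\pi+t)u_n^2}$ in $L^1(I)$ when $u\not\equiv0$, you prove this step directly via the loss-of-mass estimate $\|(-\Delta)^{1/4}(u_n-u)\|_{L^2}^2\le 1-\kappa/2$ combined with the elementary inequality $u_n^2\le(1+\varepsilon)(u_n-u)^2+(1+1/\varepsilon)u^2$ and \eqref{eq:2.11}; this is precisely the mechanism behind Lions' result and makes your proof more self-contained. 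For the limit passage, the paper uses the radial representation of Corollary~\ref{cor:2.8} and H\"older, while you split $\Phi=\Phi_+-\Phi_-$ and use Young's inequality with $\log^+\tfrac{1}{|\cdot|}\in L^r(\mathbb{R})$ for all $r<\infty$; both work, and yours does not rely on radiality at that stage. Finally, you symmetrize the maximizing sequence at the outset, which the paper omits in its proof of Proposition~\ref{pro:3.5} but actually needs in order to legitimately apply Proposition~\ref{pro:3.4} (whose proof tacitly assumes radially decreasing $u_n$). One minor point: when you conclude $\bigl(\log^+\tfrac{1}{|\cdot|}\bigr)*(1_I G(u_n))\to\bigl(\log^+\tfrac{1}{|\cdot|}\bigr)*(1_I G(u))$ in $L^\infty$, you should use that the kernel lies in $L^{q'}$ for some $1<q<p$ (where Vitali gives $L^q$-convergence of $G(u_n)$), not $L^{p'}$; this is harmless since the kernel is in every $L^r$.
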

\begin{proof}
    Let $\{u_n\}$ be a maximizing sequence in $E$  for $m(G)$.  Since $E$  is bounded in $H_0^\frac{1}{2}\left(I\right)$, we may assume that
$$
u_n \rightharpoonup u \in H_0^\frac{1}{2}\left(I\right) \quad \text { with } u \in E.
$$
By the concentration-compactness principle \cite[Theorem 1.6]{19},  two scenarios may occur:

\textbf{Case 1: $u = 0$.} \\
Proposition \ref{pro:3.4} implies that
$$
m(G)=\lim _{n \rightarrow \infty} \Phi\left(u_n\right)=\Phi(0).
$$
However, Remark \ref{re:2.9} states $\Phi(0) = 0$, contradicting $m(G) > 0$.
Thus this case is impossible.

\textbf{Case 2: $u \neq 0$}, and there exists $t > 0$ such that
$e^{(\pi + t) u_n^2}$ is bounded in $L^1(I)$. \\
Consequently, by the fractional Trudinger-Moser inequality,
\begin{equation}\label{eq:exp_conv}
e^{\pi u_n^2} \to e^{\pi u^2} \quad \text{in } L^1(I).
\end{equation}
Define $v_n:=1_{I} G\left(u_n\right)$ for $n \in \mathbb{N}$ and $v:=$ $1_{I} G(u)$. By the hypothesis of  Case 2, $ \int_{I} e^{(\pi+t) u_n^2} d x$ is bounded for some $t>0$, thus H{\"o}lder's inequality shows $v_n$ is bounded in $L^{s_0}\left(\mathbb{R}\right)$ with $s_0=1+\frac{t}{\pi}>1$. Moreover, interpolation with
$$
v_n \rightarrow v \quad \text { in } L^1\left(\mathbb{R}\right),
$$
yields that
$$
v_n \rightarrow v \quad \text { in } L^s\left(\mathbb{R}\right) \quad \text { for } 1 \leq s<s_0.
$$
Consider the difference:
$$
\begin{aligned}
& \frac{\Phi\left(u_n\right)-\Phi(u)}{2}\\
=&\int_0^1 v_n(r) \log \frac{1}{r} \int_0^rv_n(r) d \rho d r-\int_0^1 v(r) \log \frac{1}{r} \int_0^r v(\rho) d \rho d r \\
=&\int_0^1 v_n(r) \log \frac{1}{r} \int_0^r \left[v_n(\rho)-v(\rho)\right] d \rho d r+\int_0^1 \left[v_n(r)-v(r)\right] \log \frac{1}{r} \int_0^r v(\rho) d \rho d r,
\end{aligned}
$$
where, for fixed $s \in\left(1, s_0\right)$, by H{\"o}lder's inequality and the boundedness of $\{v_n\}$ in $L^1(\mathbb{R})$,
$$
\begin{aligned}
& \left|\int_0^1 v_n(r) \log \frac{1}{r} \int_0^r \left[v_n(\rho)-v(\rho)\right] d \rho d r\right| \\
\leq& \left|v_n-v\right|_s \int_0^1 \left|I_r\right|^{\frac{1}{s^{\prime}}} v_n(r) \log \frac{1}{r} d r \\
 \leq &\left|v_n-v\right|_s\int_0^1 (2r)^{\frac{1}{s^{\prime}}} v_n(r) \log \frac{1}{r} d r \\\leq& C\left|v_n-v\right|_s\left|v_n\right|_1 \rightarrow 0 \quad \text { as } n \rightarrow \infty,
\end{aligned}
$$
 and
$$
\begin{aligned}
& \left|\int_0^1 \left[v_n(r)-v(r)\right] \log \frac{1}{r} \int_0^r v(\rho) d \rho d r\right| \\
\leq&|v|_s \int_0^1 \left|I_r\right|^{\frac{1}{s^{\prime}}}\left[v_n(r)-v(r)\right] \log \frac{1}{r} d r \\
 \leq& C|v|_s\int_0^1 (2r)^{\frac{1}{s^{\prime}}}\left[v_n(r)-v(r)\right] \log \frac{1}{r} d r \\
 \leq& C|v|_s\left|v_n-v\right|_1 \rightarrow 0 \quad \text { as } n \rightarrow \infty.
\end{aligned}
$$
Combining above estimates, we conclude
$$
m(G)=\lim_{n \rightarrow \infty} \Phi\left(u_n\right)=\Phi(u),
$$
so $m(G)$ is attained at $u \in E$.
\end{proof}

The proof of Theorem \ref{th:1.2} is now completed by the following lemma.
\begin{lemma}\label{le:3.6}
     Let $G$ satisfy $(A)$, and let $u \in E$ be a maximizer for $\left.\Phi\right|_{E}$. Then, up to a change of sign, we have $u \in E$, and $u$ is strictly positive in $I$.
\end{lemma}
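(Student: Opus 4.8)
The plan is to reduce to a Schwarz‑symmetric nonnegative maximizer and then to exclude, by a localized boundary perturbation, that such a maximizer can vanish on a subinterval of $I$; throughout take $I=(-1,1)$. Since $G$ is even, $\Phi(u)=\Phi(|u|)$, and $\bigl||u(x)|-|u(y)|\bigr|\le|u(x)-u(y)|$ together with Proposition~\ref{A.1} gives $\|(-\Delta)^{1/4}|u|\|_2\le\|(-\Delta)^{1/4}u\|_2\le1$, so $|u|\in E$ is again a maximizer and we may assume $u\ge0$. As $G$ is continuous, even and strictly increasing on $[0,\infty)$, one has $(1_IG(u))^*=1_IG(u^*)$; hence by Lemma~\ref{le:2.4} $u^*\in E$, and by Lemma~\ref{le:2.3}(iii) (applicable since $\Phi_\pm(u),\Phi_\pm(u^*)<\infty$ by Corollary~\ref{cor:2.6}) $\Phi(u^*)\ge\Phi(u)=m(G)$. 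So $u^*$ is a maximizer and we may take $u=u^*$, even and radially nonincreasing. By Remark~\ref{re:2.9} $u\not\equiv0$, so $r_0:=\tfrac12|\{u>0\}|\in(0,1]$, with $u>0$ on $(-r_0,r_0)$ and $u\equiv0$ on $\{|x|\ge r_0\}$; it suffices to prove $r_0=1$.

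\emph{Positivity of the logarithmic potential off the support.} Assume $r_0<1$ and write $1_IG(u)=G(0)\,1_I+v_0$ with $v_0:=1_{(-r_0,r_0)}\bigl(G(u)-G(0)\bigr)\ge0$, $v_0\not\equiv0$, supported in $[-r_0,r_0]$. For $r_0<|x|<1$, Lemma~\ref{le:2.7} (since $\operatorname{supp}v_0\subset\{|y|<|x|\}$) gives $\bigl[\log\tfrac1{|\cdot|}*v_0\bigr](x)=\bigl(\log\tfrac1{|x|}\bigr)\|v_0\|_1>0$, while directly
\[
\Bigl[\log\tfrac1{|\cdot|}*1_I\Bigr](x)=\int_{-1}^{1}\log\tfrac1{|x-y|}\,dy=2-(1+|x|)\log(1+|x|)-(1-|x|)\log(1-|x|)\ \ge\ 2-2\log2>0 .
\]
Hence the logarithmic potential $\phi_u:=\log\tfrac1{|\cdot|}*(1_IG(u))$ is continuous and strictly positive on $\{r_0<|x|<1\}$.

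\emph{The perturbation.} Choose an even $\psi\in C_c^\infty(\mathbb{R})$ with $\psi\ge0$, $\psi\not\equiv0$ and $\operatorname{supp}\psi$ a compact subset of $\{r_0<|x|<1\}\subset I$. Since $\operatorname{supp}u\cap\operatorname{supp}\psi=\emptyset$,
\[
\iint_{\mathbb{R}^2}\frac{(u(x)-u(y))(\psi(x)-\psi(y))}{(x-y)^2}\,dx\,dy=-2\iint_{\operatorname{supp}u\times\operatorname{supp}\psi}\frac{u(x)\psi(y)}{(x-y)^2}\,dx\,dy<0 ,
\]
so $[u+t\psi]_{W^{1/2,2}(\mathbb{R})}\le[u]_{W^{1/2,2}(\mathbb{R})}$ and $u+t\psi\in E$ for all small $t>0$. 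Put $\delta_t:=1_I\bigl(G(u+t\psi)-G(u)\bigr)=1_{\operatorname{supp}\psi}\bigl(G(t\psi)-G(0)\bigr)$, which is $\ge0$, $\not\equiv0$, with $\|\delta_t\|_\infty\le G(t\|\psi\|_\infty)-G(0)\to0$ as $t\to0^+$. Expanding the quadratic form and using Fubini,
\[
\Phi(u+t\psi)-\Phi(u)=2\int\phi_u\,\delta_t+\iint\log\tfrac1{|x-y|}\delta_t(x)\delta_t(y)\,dx\,dy\ \ge\ \|\delta_t\|_1\Bigl(2\min_{\operatorname{supp}\psi}\phi_u-C\|\delta_t\|_\infty\Bigr)>0
\]
for $t$ small, where $C$ bounds $\sup_x\int_{|x-y|\le\operatorname{diam}\operatorname{supp}\psi}|\log|x-y||\,dy$. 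This contradicts the maximality of $u$; hence $r_0=1$, i.e. $u>0$ a.e. in $I$. For a general maximizer $u$, applying this to $|u|$ and invoking the equality case of Lemma~\ref{le:2.3}(iii) pins $1_IG(u)$ as a translate of its own Schwarz rearrangement; since $1_IG(u)\ge G(0)$ on $I$, vanishes outside $\overline I$, and $|\{|u|>0\}|=|I|$, the translation is trivial, so $1_IG(u)$—hence $|u|$ and $u^2$—is even and radially nonincreasing, and the rigidity of $H^{1/2}(\mathbb{R})$ (a genuine sign change of $u$ inside $\{u^2>0\}$ would make $[u]_{W^{1/2,2}(\mathbb{R})}$ infinite) gives $u=\pm u^*$, strictly positive in $I$.

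\emph{Main obstacle.} The crux is the positivity of $\phi_u$ near $\partial I$: because $\log\tfrac1{|x-y|}$ changes sign on $I\times I$, no monotonicity argument in $G(u)$ applies, and the scale‑invariance $[u(\cdot/\lambda)]_{W^{1/2,2}(\mathbb{R})}=[u]_{W^{1/2,2}(\mathbb{R})}$ only forces the normalization identity $(\int_IG(u))^2=2m(G)$, not a contradiction. Evaluating $\log\tfrac1{|\cdot|}*1_I$ explicitly on $\overline I$ and combining it with the Newton‑type identity of Lemma~\ref{le:2.7} for the remaining radial mass is exactly what makes the boundary bump strictly raise $\Phi$; the sign‑rigidity step for the original $u$ is the only other point requiring care.
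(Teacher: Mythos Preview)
Your argument is correct, but the paper takes a much shorter route that you explicitly dismissed. After reducing to the radially nonincreasing case with $u>0$ on $(0,\tau)$ and $u\equiv0$ on $[\tau,1)$, the paper simply dilates: set $\tilde u(r)=u(\tau r)$. Scale invariance gives $|(-\Delta)^{1/4}\tilde u|_2=|(-\Delta)^{1/4}u|_2$, so $\tilde u\in E$, and with $v=G(u)$ the representation of Corollary~\ref{cor:2.8}(ii) yields
\[
\Phi(\tilde u)=2\int_0^1 v(\tau r)\,\log\tfrac{1}{r}\int_0^r v(\tau\rho)\,d\rho\,dr .
\]
Since $u$ is radially nonincreasing, $v(\tau r)\ge v(r)$ for $r\in(0,\tau)$, and for $r\in[\tau,1)$ one has $v(\tau r)=G(u(\tau r))>G(0)=v(r)$; together with $\log\tfrac1r>0$ this gives $\Phi(\tilde u)>\Phi(u)$, a contradiction. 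Your remark that scale invariance ``only forces the normalization identity $(\int_I G(u))^2=2m(G)$, not a contradiction'' is therefore mistaken: the dilation does strictly increase $\Phi$ because $G(u(\tau\,\cdot))\ge G(u)$ pointwise with strict inequality on $[\tau,1)$, and the kernel $\log\tfrac1r$ in the radial formula is positive on $(0,1)$.

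Your bump perturbation works too and is self-contained: you gain admissibility of $u+t\psi$ from the negativity of the off-diagonal bilinear term (disjoint supports), and you gain $\Phi(u+t\psi)>\Phi(u)$ from the explicit positivity of $\phi_u$ on $\{r_0<|x|<1\}$, which you obtain cleanly from Lemma~\ref{le:2.7} and the elementary computation of $\log\tfrac{1}{|\cdot|}*1_I$. The trade-off is length and dependence on the explicit formula for the potential of $1_I$; the paper's rescaling needs neither, only the monotonicity of $G$ and the radial representation. Your final paragraph (equality case of Lemma~\ref{le:2.3}(iii) plus the $H^{1/2}$ jump obstruction) goes beyond what the lemma asserts and supplies the ``even and radially decreasing, up to sign'' conclusion of Theorem~\ref{th:1.2}(ii); the paper's proof of the lemma already assumes the symmetric representative and leaves that reduction implicit.
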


\begin{proof}
    We already know that $u \not \equiv 0$. We first assume that $u \in E$, and we suppose by contradiction that there exists $\tau \in(0,1)$ with $u(r)>0$ for $r \in(0, \tau)$ and $u(r)=0$ for $r \in[\tau, 1)$. Define $\tilde{u}(r)=u(\tau r)$, then $\tilde{u} \in H_0^\frac{1}{2}\left(I\right)$ and $|(-\Delta)^{\frac{1}{4}}\tilde{u}|_{2}=|(-\Delta)^{\frac{1}{4}}{u}|_{2}$,  therefore $\tilde{u} \in E$. Moreover, with $v:=G(u)$ we have
$$
\Phi(\tilde{u})=2\int_0^1 v(\tau r) \log \frac{1}{r} \int_0^r v(\tau \rho) d \rho d r,
$$
where
$$
v(\tau r) \geq v(r) ~ \text { for } r \in(0, \tau), \quad \text { and } ~ v(\tau r)=G(u(\tau r))>G(0)=v(r) ~\text { for } r \in[\tau, 1),
$$
since $G$ is strictly increasing on $[0, \infty)$ by assumption $(A)$. It thus follows that
$$
\Phi(\tilde{u})>2 \int_0^1 v(r) \log \frac{1}{r} \int_0^r v(\rho) d \rho d r=\Phi(u)
$$
contrary to the maximizing property of $u$. We thus conclude that $u$ is strictly positive in $I$.
 The proof is thus finished.
\end{proof}

\section{Maximization problem in the entire space  }

In this section we complete the proof of Theorem \ref{th:1.4}. We recall that $E_{\infty}:=\left\{u \in H^\frac{1}{2}\left(\mathbb{R}\right):\|u\| \leq1\right\}$.
Throughout this section, we assume that $G: \mathbb{R} \rightarrow \mathbb{R}$ satisfies assumption $\left(A_1\right)$.

 \begin{lemma}\label{le:4.1}
     Suppose that $G$ has at most $\gamma$-critical growth for some $\gamma \geq1$. Then
$$
m_{\infty}^{+}(G)=\sup _{E_{\infty}} \Psi_{+}<\infty.
$$
\end{lemma}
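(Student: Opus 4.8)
The plan is to reduce the problem on the whole line to a ``local'' and a ``tail'' contribution, exploiting the fact that $\Psi_+$ only sees the kernel $\log^+\frac{1}{|x-y|}$, which is supported on the diagonal strip $|x-y|\le 1$. First, by assumption $(A_1)$ the function $G$ has at most $\gamma$-critical growth with $\gamma\ge 1$, so $G(s)\le c\,\frac{e^{\pi s^2}}{1+|s|}$ for all $s$, and in particular $G(s)\le c\,e^{\pi s^2}$; moreover $G(t)=O(|t|)$ near $0$. By Lemma~\ref{le:2.5.1}(ii) we then have $G(u)\in L^2(\mathbb{R})$ whenever $u\in H^{\frac12}(\mathbb{R})$, with a bound depending only on $\|u\|$ (tracking the constants in that lemma). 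Next, by Lemma~\ref{le:2.4}, replacing $u$ by its Schwarz symmetrization $u^*$ does not decrease $\Psi_+$ (apply Lemma~\ref{le:2.3}(i) to $v=1_I G(u)$-type functions, or directly: $\log^+\frac{1}{|\cdot|}$ is a symmetric decreasing kernel, so Riesz rearrangement gives $\Psi_+(u)\le\Psi_+(u^*)$) while keeping $u^*\in E_\infty$. Hence it suffices to bound $\Psi_+$ over radially decreasing $u\in E_\infty$.

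So fix $u=u^*\in E_\infty$. The decay estimate \eqref{20250612-e1}, namely $u^2(x)\le \frac{|u|_2^2}{2|x|}\le\frac{1}{2|x|}$ for $x\ne 0$, shows that $u$ is uniformly bounded outside any fixed neighborhood of the origin; in particular $|u(x)|\le 1$ for $|x|\ge \tfrac12$. I would split
$$
\Psi_+(u)=\iint_{|x-y|\le 1}\log\frac{1}{|x-y|}\,G(u(x))G(u(y))\,dx\,dy
$$
according to whether the pair $(x,y)$ lies in $I_1\times I_1$ (the unit interval around $0$) or not. On the ``outer'' region, at least one of $|x|,|y|$ exceeds $\tfrac12$; since $|x-y|\le 1$ this forces the other coordinate to be bounded too (say $\ge -\tfrac32$ in absolute value is automatic, and more precisely one of them is $\ge\tfrac12$ so the kernel localizes), and on that region $G(u(x))G(u(y))\le C\,(|u(x)|+|u(y)|)^2$ by the $O(|t|)$ behavior plus boundedness; combining with $\log\frac{1}{|x-y|}\in L^1_{\mathrm{loc}}$ and $u\in L^2(\mathbb{R})$ via Young's inequality (as in the proof of Lemma~\ref{le:2.5.1}(ii)) bounds this piece by $C\|u\|^2\le C$. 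On the ``inner'' region $I_1\times I_1$, $u$ restricted to $I_1$ need not vanish on $\partial I_1$, so I invoke the comparison function trick from the proof of Lemma~\ref{le:2.5.1}(ii): set $w(x)=(1+|u|_2^2)^{1/2}(u(x)-u(1))$ on $I_1$ and $0$ outside; then $w\in H_0^{\frac12}(I_1)$ with $\|(-\Delta)^{1/4}w\|_{L^2}^2\le (1+|u|_2^2)\|(-\Delta)^{1/4}u\|_{L^2}^2\le 2$, and $u^2(x)\le w^2(x)+1+u^2(1)\le w^2(x)+C$ on $I_1$ by \eqref{eq:2.5.1}. Therefore $G(u(x))\le c\,e^{\pi u^2(x)}\le c'\,e^{\pi w^2(x)}$ on $I_1$, and the inner piece is dominated by
$$
c'^2 e^{2\pi C}\iint_{I_1\times I_1}\log\frac{1}{|x-y|}\,e^{\pi w^2(x)}e^{\pi w^2(y)}\,dx\,dy
\le c'^2 e^{2\pi C}\,\Phi_{0,0}(\tilde w,\tilde w)
$$
after rescaling $w$ by a fixed factor $\le 1$ to normalize its $(-\Delta)^{1/4}$-norm into $E$ (the interval $I_1$ plays the role of $I$), where $\Phi_{0,0}$ is the functional of Lemma~\ref{le:2.10} with $\gamma_1=\gamma_2=0$ applied after passing to radial coordinates via Corollary~\ref{cor:2.8}; since $\gamma_1+\gamma_2=0<2$ one cannot quote Lemma~\ref{le:2.10} verbatim, so instead I bound $e^{\pi w^2}$ on $I_1$ directly by the fractional Trudinger–Moser inequality \eqref{eq:1.2} (or \eqref{eq:1.1}): $\int_{I_1}e^{\pi w^2}\le c(I_1)$, and then $\iint_{I_1\times I_1}\log^+\frac{1}{|x-y|}e^{\pi w^2(x)}e^{\pi w^2(y)}\le \|\log^+\frac{1}{|\cdot|}\|_{L^q(I_1)}\,\|e^{\pi w^2}\|_{L^{q'}(I_1)}^2$ fails because the exponential is only $L^1$ — the correct route is Lemma~\ref{le:2.10} with $\gamma_1=\gamma_2=1$ together with the extra factor $(1+|u|)$ hidden in the $\gamma\ge1$ critical growth of $G$, exactly as in Proposition~\ref{pro:3.2}; that is, write $G(u)\le c_1\frac{e^{\pi w^2}}{1+|w|}\cdot e^{\pi C}$ and apply Lemma~\ref{le:2.10} with $\gamma_1=\gamma_2=1$ to the rescaled $w$.

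The main obstacle, and the step requiring the most care, is precisely this inner-region estimate: unlike the bounded-domain case of Theorem~\ref{th:1.2}, here $u$ does not vanish on the boundary of $I_1$, so one must absorb the boundary value $u(1)$ through the comparison function $w$ and control the resulting constant $e^{\pi(1+u^2(1))}$ — but $u^2(1)\le \frac{|u|_2^2}{2}\le\frac12$ by \eqref{20250612-e1}, so this constant is uniformly bounded over $E_\infty$, which is what makes the argument go through. The remaining pieces (rearrangement, the $L^2$-bound on $G(u)$, Young's inequality on the outer region, and the reduction to Lemma~\ref{le:2.10}) are routine once the normalization of $w$ into the unit ball of $\dot H^{1/2}$ is set up, using $\|(-\Delta)^{1/4}w\|_{L^2}^2\le 1+|u|_2^2\le 2$ and rescaling by $1/\sqrt2$.
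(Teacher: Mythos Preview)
Your overall strategy coincides with the paper's: pass to $u=u^*$ by Riesz rearrangement, split into an ``inner'' contribution on $I_1=(-1,1)$ and an ``outer'' tail, construct the comparison function $w=(1+|u|_2^2)^{1/2}(u-u(1))^+\in H_0^{1/2}(I_1)$, use $u^2\le w^2+C$ to transfer the critical exponential to $w$, and then invoke the bounded-domain result (Proposition~\ref{pro:3.2} / Lemma~\ref{le:2.10} with $\gamma_1=\gamma_2=1$). The paper organizes the outer/cross part slightly differently --- it splits the \emph{function} $G(u)=v_1+v_2$ with $v_1=G(u1_{\mathbb{R}\setminus I})$, $v_2=G(u1_I)$ and handles the three resulting double integrals by Young's inequality, using $|v_1|_2\le c$ and $|v_2|_1\le c$ --- which is cleaner than your region-splitting (your claim $G(u(x))G(u(y))\le C(|u(x)|+|u(y)|)^2$ on the ``outer'' region fails when $|x|\ge 1$ but $|y|$ is small), but this is a packaging issue and your pieces can be reassembled the same way.

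There is, however, a genuine gap in your normalization of $w$. You write $\|(-\Delta)^{1/4}w\|_2^2\le(1+|u|_2^2)\|(-\Delta)^{1/4}u\|_2^2\le 2$ and propose to rescale by $1/\sqrt{2}$. But if $\tilde w=w/\sqrt{2}$, then $e^{\pi w^2}=e^{2\pi\tilde w^2}$, and neither the Trudinger--Moser inequality~\eqref{eq:1.2} nor Lemma~\ref{le:2.10} controls $\int e^{2\pi\tilde w^2}$ uniformly over $\tilde w\in E$ --- the exponent is supercritical and the bound blows up. The rescaling destroys exactly the estimate you need. The fix, which is the whole point of the prefactor $(1+|u|_2^2)^{1/2}$ and which the paper exploits, is that for $u\in E_\infty$ one has $\|(-\Delta)^{1/4}u\|_2^2\le 1-|u|_2^2$, hence
\[
\|(-\Delta)^{1/4}w\|_2^2\le(1+|u|_2^2)(1-|u|_2^2)=1-|u|_2^4\le 1,
\]
so $w\in E$ \emph{without any rescaling}. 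Once you make this observation, your argument (with the Young-inequality treatment of the cross terms) goes through and matches the paper's.
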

\begin{proof}
    By assumption, there exists a constant $c_1>0$ with
\begin{equation}\label{eq:4.1}
    G(s) \leq \widetilde{G}(s):=c_1 \frac{e^{\pi s^2}}{\sqrt{1+s^2}} \quad \text { for~every } s \in \mathbb{R}.
\end{equation}
We note that $\widetilde{G}$ satisfies assumption $(A)$. Moreover, we note the property that
\begin{equation}\label{eq:4.2}
    \widetilde{G}(\sqrt{t+s})=c_1 \frac{e^{\pi(t+s)}}{\sqrt{1+(s+t)}} \leq c_1 e^{\pi s} \frac{e^{\pi t}}{\sqrt{1+t}}=e^{\pi s} \widetilde{G}(\sqrt{t}) \quad \text { for } t, s \geq 0.
\end{equation}
As we claimed in Lemma \ref{le:2.3} (i), we find that $\Psi_{+}(u^*)\geq\Psi_{+}(u^*)$, therefore for the sake of argument, let's assume $u\in E_{\infty}$. Since
 $$
 \begin{aligned}
     |u|^2_{2}\geq\int_{0}^{d}(u(s))^2\,ds\geq (u(d))^2d,
 \end{aligned}
 $$
we have the estimate
\begin{equation}\label{eq:4.3}
   u(d) \leq |u|_{2} d^{-\frac{1}{2}} \leq\|u\| d^{-\frac{1}{2}} \leq d^{-\frac{1}{2}}.
\end{equation}
Let $v=G(u)$. We write
$$
v=G(u)=G\left(u 1_{\mathbb{R} \backslash I}\right)+G\left(u 1_{I}\right)=: v_1+v_2.
$$
Since $G$ is nonnegative and $G(t)=O(|t|)$ as $t \rightarrow 0$, it follows that
$$
0 \leq v(d) \leq c_2|u(d)| \quad \text { for } d \geq 1
$$
with a constant $c_2>0$ and therefore,
\begin{equation}\label{eq:4.4}
    \left|v_1\right|_2 \leq c_2|u|_2 \leq c_2\|u\| \leq c_2.
\end{equation}
To estimate $v_2$, we now consider the radial function
$$
w:=(1+|u|_2^2)^{\frac{1}{2}}[u-u(1)]^{+} .
$$
As we proved in Lemma \ref{le:2.5.1}, we have $w\in H_0^\frac{1}{2}\left(I\right)$, meanwhile, since $u\in E_{\infty}$,
$$
\begin{aligned}
\left\|(-\Delta)^{\frac{1}{4}} w\right\|_{L^2(I)}^2
& \leq(1+|u|_2^2)\left\|(-\Delta)^{\frac{1}{4}} u\right\|_{L^2(\mathbb{R})}^2\leq\|u\|\leq1,
\end{aligned}
$$
which implies $w\in E_\infty$. Using \eqref{eq:2.5.1} and the fractional Trudinger-Moser inequality (\ref{eq:1.2}), we infer
\begin{equation}\label{eq:4.6}
    \left|v_2\right|_1 \leq c_1 \int_{I} e^{\pi w^2} d x \leq c_1 c\left(I\right)=: c_3.
\end{equation} \\
By (\ref{eq:2.5.1}) and (\ref{eq:4.2}), we also have
$$
v_2 \leq \widetilde{G}\left(\sqrt{u^2}\right) \leq \widetilde{G}\left(\sqrt{w^2+2}\right) \leq  e^{2\pi} \widetilde{G}(w) \quad \text { in } I .
$$
We now have
\begin{equation}\label{eq:4.8}
\begin{aligned}
    \Psi_{+}(u)=&\int_{\mathbb{R}}\int_{\mathbb{R}}\log^+\frac{1}{|x-y|}~v(x)v(y)dxdy
\\=&\int_{\mathbb{R}}\int_{\mathbb{R}}\log^+\frac{1}{|x-y|}~v_1(x)v_1(y)dxdy+\int_{\mathbb{R}}\int_{\mathbb{R}}\log^+\frac{1}{|x-y|}~v_2(x)v_2(y)dxdy
\\&+2 \int_{\mathbb{R}}\int_{\mathbb{R}}\log^+\frac{1}{|x-y|}~v_1(x)v_2(y)dxdy,
\end{aligned}
\end{equation}
where
$$
\begin{aligned}
&\int_{\mathbb{R}}\int_{\mathbb{R}}\log^+\frac{1}{|x-y|}~v_2(x)v_2(y)dxdy\\ \leq& e^{4\pi} \int_{\mathbb{R}}\int_{\mathbb{R}}\log^+\frac{1}{|x-y|}~\widetilde{G}(w(x))\widetilde{G}(w(y))dxdy\\
\leq &e^{4 \pi} m_1^{+}(\widetilde{G})=:c_4<\infty
\end{aligned}
$$
by Proposition \ref{pro:3.2}. Moreover, by Young's inequality, (\ref{eq:4.4}) and (\ref{eq:4.6}) we have
$$
\int_{\mathbb{R}}\int_{\mathbb{R}}\log^+\frac{1}{|x-y|}~v_1(x)v_2(y)dxdy \leq\left|v_1\right|_2\left|\log ^{+} \frac{1}{|\cdot|}\right|_2\left|v_2\right|_1 \leq c_2\left|\log ^{+} \frac{1}{|\cdot|}\right|_2 c_3=: c_5 .
$$
Moreover, again by Young's inequality,
$$
\int_{\mathbb{R}}\int_{\mathbb{R}}\log^+\frac{1}{|x-y|}~v_1(x)v_1(y)dxdy \leq\left|v_1\right|_2^2\left|\log ^{+} \frac{1}{|\cdot|}\right|_1 \leq c_2^2\left|\log ^{+} \frac{1}{|\cdot|}\right|_1=: c_6 .
$$
Inserting these estimates in (\ref{eq:4.8}), we deduce that $\Psi_{+}(u) \leq c_4+c_6+2 c_5<\infty$. Thus the claim is proved.
\end{proof}

    From Lemma \ref{le:4.1} it follows that
$$
m_{\infty}(G)=\sup _{E_{\infty}} \Psi \leq m_{\infty}^{+}(G)<\infty.
$$

Our next aim is to prove the existence of maximizers for $m_{\infty}(G)$.
\begin{proposition}\label{pro:4.3}
    Suppose that $G$ satisfies $(A)$, $G$ has at most $\gamma$-critical growth for some $\gamma >1$, and \begin{equation*}\label{74}
        G(s) \leq c \, \left(e^{\pi s^{2}}-1\right) \quad\text{for}~ s \in \mathbb{R}
    \end{equation*}  with  some $c>0$. Then the value $m_{\infty}(G)<\infty$ is attained by a function $u \in E_{\infty}$.
\end{proposition}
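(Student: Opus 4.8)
The plan is to follow the scheme of the proof of Proposition~\ref{pro:3.5} and to add the estimates needed to control the behaviour at spatial infinity. Throughout, set $I_R:=(-R,R)$. I would start with a maximizing sequence $\{u_n\}\subset E_\infty$ for $m_\infty(G)$. By Lemma~\ref{le:2.4} the Schwarz rearrangements $u_n^*$ still lie in $E_\infty$, and since $\Psi_+(u_n^*)<\infty$ by Corollary~\ref{cor:2.6}(ii), Lemma~\ref{le:2.3}(iii) gives $\Psi(u_n^*)\ge\Psi(u_n)$ (note $G(u_n^*)=(G(u_n))^*$ as $G$ is even, nonnegative and increasing on $[0,\infty)$); hence we may assume each $u_n=u_n^*$ is even, nonnegative and radially decreasing. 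As $E_\infty$ is bounded in $H^\frac12(\mathbb{R})$, after passing to a subsequence $u_n\rightharpoonup u$ weakly in $H^\frac12(\mathbb{R})$ with $u\in E_\infty$ (weak lower semicontinuity of $\|\cdot\|$), $u_n\to u$ a.e., and $u=u^*$. Moreover, since $u_n=u_n^*$ and $|u_n|_2\le 1$, the bound $u_n^2(x)\le\frac{|u_n|_2^2}{2|x|}\le\frac{1}{2|x|}$ holds for $x\neq0$ as in \eqref{20250612-e1}, so $\{u_n\}$ is uniformly bounded on $\{|x|\ge1\}$; combining this with $G(s)\le c(e^{\pi s^2}-1)\le C_0 s^2$ on bounded ranges, the functions $v_n:=G(u_n)$ and $v:=G(u)$ satisfy
$$
v_n(x),\ v(x)\ \le\ \frac{C}{|x|}\qquad\text{for }|x|\ge1,
$$
with $C$ independent of $n$.

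Next I would apply the concentration--compactness dichotomy exactly as in Proposition~\ref{pro:3.5} (via \cite[Theorem~1.6]{19}, after restricting to bounded intervals), which yields two cases. \textbf{Case~1: $u=0$.} Then $\Psi(u_n)\to\Psi(0)=0$ by the whole-space counterpart of Proposition~\ref{pro:3.4}; the argument there carries over, because under $(A_1)$ one has $G(0)=0$ (so $\Psi(0)=0$ and the terms involving $G(0)$ disappear), the part of $\Psi(u_n)$ coming from $\{|x|\ge1\}$ is controlled through $v_n(x)\le C/|x|$, and the part near the origin is handled by the truncation/Chebyshev estimate of Proposition~\ref{pro:3.4} based on the fractional Trudinger--Moser inequality \eqref{eq:1.1} and $G(s)\le c(e^{\pi s^2}-1)$. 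This contradicts $m_\infty(G)>0$ (Remark~\ref{re:2.9}(ii)), so Case~1 cannot occur. \textbf{Case~2: $u\neq0$,} and there is $t>0$ with $e^{(\pi+t)u_n^2}$ bounded in $L^1(I_R)$ for every $R>0$. Writing $s_0:=1+\frac{t}{\pi}>1$, the growth bound $G(s)\le ce^{\pi s^2}$ gives that $\{v_n\}$ is bounded in $L^{s_0}(I_R)$ for every $R$; since $v_n\to v$ a.e., Vitali's theorem yields $v_n\to v$ in $L^s(I_R)$ for every $R>0$ and $1\le s<s_0$, in particular in $L^1(I_R)$.

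I would then prove $\Psi_+(u_n)\to\Psi_+(u)$. Because $\log^+\frac1{|\cdot|}$ is supported in $\{|x-y|<1\}$, write
$$
\Psi_+(u_n)-\Psi_+(u)=\iint_{|x-y|<1}\log\tfrac{1}{|x-y|}\big(v_n(x)v_n(y)-v(x)v(y)\big)\,dx\,dy
$$
and split according to whether $(x,y)\in I_R\times I_R$ or not. On the bounded set $I_R\times I_R$ the integral tends to $0$ as $n\to\infty$ for each fixed $R$, using the factorization $v_nv_n-vv=v_n(v_n-v)+(v_n-v)v$, the boundedness of $\{v_n\},v$ in $L^{s_0}(I_R)$, the convergence $v_n\to v$ in $L^1(I_R)$, and Young's inequality with $\log^+\frac1{|\cdot|}\in L^r(\mathbb{R})$ for all $r<\infty$. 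On the complementary region at least one of $x,y$ lies outside $I_R$, which by $|x-y|<1$ forces $\min\{|x|,|y|\}\ge R-1$; there, using $v_n(x)\le C/|x|$ and $\int_{|x-y|<1}\log\frac1{|x-y|}\,dy=2$, the contribution is at most $C\int_{|x|\ge R-1}\frac{dx}{|x|^2}=O(1/R)$ uniformly in $n$, and the same holds with $v_n$ replaced by $v$. Letting $n\to\infty$ and then $R\to\infty$ gives $\Psi_+(u_n)\to\Psi_+(u)<\infty$.

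Finally, $\Psi_-(u_n)=\Psi_+(u_n)-\Psi(u_n)\to\Psi_+(u)-m_\infty(G)$, which is finite; since $v_n\to v$ a.e. and the integrand of $\Psi_-$ is nonnegative, Fatou's lemma gives $\Psi_-(u)\le\Psi_+(u)-m_\infty(G)$, whence $\Psi(u)=\Psi_+(u)-\Psi_-(u)\ge m_\infty(G)$. As $u\in E_\infty$ this forces $\Psi(u)=m_\infty(G)$, so $m_\infty(G)$ is attained at the even, radially decreasing function $u$. \textbf{The main obstacle} is the control of the tails: both the whole-space version of Proposition~\ref{pro:3.4} needed in Case~1 and the uniform-in-$n$ smallness of the far-field contribution to $\Psi_+$ in Case~2 rely on the pointwise decay $u_n^2(x)\lesssim 1/|x|$ for radially decreasing functions together with the subcritical control $G(s)\le c(e^{\pi s^2}-1)$; with only $G(s)=O(|s|)$ one would get merely $v_n(x)\lesssim 1/\sqrt{|x|}$, which is not integrable enough to close the estimates.
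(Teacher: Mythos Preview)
Your overall strategy is sound but takes a genuinely different route from the paper. The paper decomposes $\Psi$ via Newton's formula (Corollary~\ref{cor:2.8}) as $\Psi(u_n)=2(\Psi_2(u_n)-\Psi_1(u_n))$, where $\Psi_2$ is the integral over $r\in(0,1)$ and $\Psi_1$ over $r\in(1,\infty)$; it then proves $\limsup_n\Psi_2(u_n)\le\Psi_2(u)$ in both concentration--compactness cases and applies Fatou to $\Psi_1$. You instead work with $\Psi=\Psi_+-\Psi_-$, prove $\Psi_+(u_n)\to\Psi_+(u)$ in Case~2 by an $I_R\times I_R$ versus tail splitting (exploiting that $\log^+\frac{1}{|\cdot|}$ has compact support), and use Fatou on $\Psi_-$. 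Your Case~2 is cleaner in that the tail estimate $v_n(x)v_n(y)\lesssim|x|^{-2}$ (both factors contribute, which your write-up should make explicit) gives a uniform $O(1/R)$ bound directly, while the paper's $\Psi_2/\Psi_1$ split avoids tails in $\Psi_2$ structurally.

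There is, however, a genuine gap in your Case~1. You invoke ``the fractional Trudinger--Moser inequality~\eqref{eq:1.1}'' to run the argument of Proposition~\ref{pro:3.4} near the origin, but \eqref{eq:1.1} applies only to functions in $W_0^{\frac12,2}(I)$, and your $u_n\in E_\infty$ are \emph{not} supported in $I$. The uniform bound $\sup_n\int_I e^{\pi u_n^2}\,dx<\infty$ that the Proposition~\ref{pro:3.4} machinery needs is not free for $u_n\in E_\infty$. The paper handles exactly this point by introducing the auxiliary functions $w_n:=(1+|u_n|_2^2)^{1/2}[u_n-u_n(1)]^+\in E$, for which $u_n^2\le w_n^2+2$ on $I$ (cf.\ the proof of Lemma~\ref{le:4.1} and inequality~\eqref{eq:2.5.1}); it then compares $v_n$ with a modified nonlinearity $F_\varepsilon(w_n)$ and applies Proposition~\ref{pro:3.4} to $w_n\in E$ rather than to $u_n$. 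Without this reduction (or an equivalent device), your ``whole-space counterpart of Proposition~\ref{pro:3.4}'' is not justified. Relatedly, your phrase ``the part of $\Psi(u_n)$ coming from $\{|x|\ge1\}$ is controlled through $v_n(x)\le C/|x|$'' is too vague: since $1/|x|\notin L^1_{\log}$, this pointwise bound alone does not show that the far-field contribution tends to zero; the simplest fix in your framework is to note $\Psi_-\ge0$ and reduce to proving $\Psi_+(u_n)\to0$, then use the same $I_R$-splitting as in Case~2 together with the $w_n$-trick on $I_1\times I_1$.
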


\begin{proof}
     Let $\{u_n\}$ be a maximizing sequence in $E_{\infty}$ for $m_{\infty}(G)$. Moreover, we have $b_{-}\left(G\left(u_n\right)\right)<\infty$ for every $n \in \mathbb{N}$. Hence, by Corollary \ref{cor:2.8}, we have
$$
\Psi\left(u_n\right)=\int_{\mathbb{R}}\int_{\mathbb{R}}\log\frac{1}{|x-y|}~G(u(x))G(u(y))dxdy=2\left(\Psi_2\left(u_n\right)-\Psi_1\left(u_n\right)\right) ~ \text { for } n \in \mathbb{N}
$$
with the nonnegative functionals $\Psi_1, \Psi_2: E_{\infty} \rightarrow[0, \infty)$ given by
$$
\Psi_1(u)=\int_1^{\infty} G(u(r)) \log r \int_0^r G(u(\rho)) d \rho d r,$$ $$ \Psi_2(u)=\int_0^1 G(u(r)) \log \frac{1}{r} \int_0^r G(u(\rho)) d \rho d r .
$$
Since $\{u_n\}$ is bounded in $E_\infty\subset H^\frac{1}{2}\left(\mathbb{R}\right)$, we may assume that
\begin{equation}\label{eq:4.10}
    u_n \rightharpoonup u\in E_{\infty} \subset H^\frac{1}{2}\left(\mathbb{R}\right).
\end{equation}

We first claim that
\begin{equation}\label{eq:4.11}
    \Psi_2(u) \geq \limsup _{n \rightarrow \infty} \Psi_2\left(u_n\right).
\end{equation}
By the concentration-compactness principle \cite[Theorem 1.6]{19},
there are two possibilities. Either
(i) $u=0$, or
(ii) $u \neq 0$, and $\int_{I} e^{(\pi+t) u_n^2} d x$ is bounded for some $t>0$ and thus
\begin{equation}\label{eq:4.12}
    e^{\pi u_n^2} \rightarrow e^{\pi u^2} \quad \text { in } L^1\left(I\right) .
\end{equation}

Case (i). As in the proof of Proposition \ref{pro:3.4}, we then deduce that
\begin{equation}\label{eq:4.13}
    u_n(r) \rightarrow 0 \quad \text { as } n \rightarrow \infty \text { for all } r>0 .
\end{equation}
Moreover, there exists a constant $c>0$ with
\begin{equation}\label{eq:4.14}
    G(s) \leq \widetilde{G}(s):=c \frac{e^{\pi s^2}}{\left(1+s^2\right)^{\frac{\gamma}{2}}} \quad \text { for } s \in \mathbb{R}.
\end{equation}
We may assume that $2 \pi>\gamma>1$ from now on. Then
$$
\widetilde{G}^{\prime}(s)=c \frac{e^{\pi s^2}}{\left(1+s^2\right)^{\frac{\gamma}{2}+1}}\left(2 \pi s\left(1+s^2\right)-\gamma s\right)>0 \quad \text { for } s>0,
$$
and therefore $\widetilde{G}$ satisfies assumption $(A)$. Let
$$
w_n:=(1+|u|_2^2)^{\frac{1}{2}}\left[u_n-u_n(1)\right]^{+}.
$$
Using \eqref{eq:4.3}, similarly to the proof of Lemma \ref{le:4.1}, we have for all $n \in \mathbb{N}$,
$$
\begin{aligned}
& w_n \in E,\\
& u(1) \leq |u|_{2}\leq\|u\| \leq 1,\\
& u_n^2 \leq w_n^2+2.
\end{aligned}
$$
Setting $v_n:=G\left(u_n\right)$, we thus deduce that for $n \in \mathbb{N}$,
$$v_n=G\left(\sqrt{u_n^2}\right) \leq G\left(\sqrt{w_n^2+2}\right) \leq \tilde{G}\left(\sqrt{w_n^2+2}\right) \leq c e^{2\pi} \widetilde{G}\left(w_n\right).  $$
For given $\varepsilon>0$ and $n$  sufficiently large, we have
$$
v_n=G\left(u_n\right)=G\left((1+|u|_2^2)^{-\frac{1}{2}} w_n+u_n(1)\right) \leq G\left(w_n+\varepsilon\right) \quad \text { in } I .
$$
We now fix $\varepsilon>0$ and define $F_{\varepsilon}: \mathbb{R} \rightarrow \mathbb{R}$ by $F_{\varepsilon}(t)=\min \left\{G(|t|+\varepsilon), c e^{2\pi} \widetilde{G}(t)\right\}$. Then $F_{\varepsilon}$ has at most $\gamma$-critical growth and satisfies (A). Moreover, we have $v_n \leq F_{\varepsilon}\left(w_n\right)$ and therefore
$$
\begin{aligned}
\Psi_2\left(u_n\right) &\leq \int_0^1 v_n(r) \log \frac{1}{r} \int_0^r v_n(\rho) d \rho d r\\ & \leq \int_0^1  F_{\varepsilon}\left(w(r)\right) \log \frac{1}{r} \int_0^r F_{\varepsilon}\left(w(\rho)\right) d \rho d r \\
& =\int_{I}\int_{I}\log\frac{1}{|x-y|}~F_{\varepsilon}(w(x))F_{\varepsilon}(w(y))dxdy
\end{aligned}
$$
for $n$ sufficiently large. Define $\Phi_{\varepsilon}: E \rightarrow \mathbb{R}$,
$$
\begin{aligned}
\Phi_{\varepsilon}(w)=&\int_{I}\int_{I}\log\frac{1}{|x-y|}~F_{\varepsilon}(w(x))F_{\varepsilon}(w(y))dxdy\\
=&\int_{\mathbb{R}}\int_{\mathbb{R}}\log\frac{1}{|x-y|}~1_{I}F_{\varepsilon}(w(x))1_{I}F_{\varepsilon}(w(y))dxdy.
\end{aligned}
$$
Then Proposition \ref{pro:3.4} implies that
$$
\Psi_2\left(u_n\right) \leq \Phi_{\varepsilon}\left(w\right) \rightarrow \Phi_{\varepsilon}(0) \quad \text { as } n \rightarrow \infty
$$
since $w \rightharpoonup 0$ in $H_0^\frac{1}{2}\left(I\right)$. Moreover, since $G(0)=0$, for $\varepsilon>0$ sufficiently small, we have
$$
\Phi_{\varepsilon}(0)=\int_{I}\int_{I}\log\frac{1}{|x-y|}~F_{\varepsilon}(0)F_{\varepsilon}(0)dxdy=\int_{I}\int_{I}\log\frac{1}{|x-y|}~G(\varepsilon)G(\varepsilon)dxdy,
$$
whereas
$$
\int_{I}\int_{I}\log\frac{1}{|x-y|}~G(\varepsilon)G(\varepsilon)dxdy \rightarrow \int_{I}\int_{I}\log\frac{1}{|x-y|}~G(0)G(0)dxdy=0 \quad \text { as } \varepsilon \rightarrow 0^{+}.
$$
It thus follows that
$$
\limsup\limits_{n \rightarrow \infty} \Psi_2\left(u_n\right) \leq 0=\Psi_2(0)=\Psi_2(u)
$$
and therefore (\ref{eq:4.11}) holds in this case.

Case (ii). With $v_n=G\left(u_n\right)$ and $v=G(u)$, we then deduce, as in the proof of Proposition \ref{pro:3.5}, that $v_n$ is bounded in $L^{s_0}\left(\mathbb{R}\right)$ with $s_0=1+\frac{t}{ \pi}>1$, and that
$$
v_n \rightarrow v \quad \text { in } L^s\left(\mathbb{R}\right) \quad \text { for } 1 \leq s<s_0.
$$
Moreover, since
$$
\Psi_2\left(u_n\right)=\int_0^1 v_n(r) \log \frac{1}{r} \int_0^r v_n(r) d \rho d r,
$$
 therefore
$$
\begin{aligned}
& \Psi_2\left(u_n\right)-\Psi_2(u)\\
=&\int_0^1 v_n(r) \log \frac{1}{r} \int_0^r v_n(r) d \rho d r-\int_0^1 v(r) \log \frac{1}{r} \int_0^r  v(\rho) d \rho d r \\
=&\int_0^1 v_n(r) \log \frac{1}{r} \int_0^r \left[v_n(\rho)-v(\rho)\right] d \rho d r+\int_0^1 \left[v_n(r)-v(r)\right] \log \frac{1}{r} \int_0^r v(r) d \rho d r.
\end{aligned}
$$
We may argue exactly as in the proof of Proposition \ref{pro:3.5} to see that $\Psi_2\left(u_n\right)-\Psi_2(u) \rightarrow 0$ as $n \rightarrow \infty$. Hence (\ref{eq:4.11}) also holds in this case.

Finally, by Fatou's lemma, we also have $\Psi_1(u) \leq \liminf _{n \rightarrow \infty} \Psi_1\left(u_n\right)$. Combining this with (\ref{eq:4.11}), we conclude that $\Psi(u) \geq \limsup _{n \rightarrow \infty} \Psi\left(u_n\right)$. Hence $u$ is a maximizer of $\Psi$ in $E_{\infty}$.
\end{proof}

\begin{proposition}\label{pro:4.4}
    Suppose that $G$ satisfies $(A)$ and has at least $\gamma$-critical growth for some $\gamma<1$. Then there exists a sequence of functions $u_n \in E_{\infty} \cap L^{\infty}\left(\mathbb{R}\right)$ with $\Psi\left(u_n\right) \rightarrow \infty$ as $n \rightarrow \infty$.
\end{proposition}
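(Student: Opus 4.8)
The plan is to recycle the Moser-type sequence $\{w_n\}$ already constructed in the proof of Proposition~\ref{pro:3.3}, namely, for $n\geq 2$,
$$
w_n(x)=\begin{cases}0,&|x|\geq1,\\ A_n\log\frac{1}{|x|},&\frac{1}{n}\leq|x|\leq1,\\ A_n\log n,&|x|\leq\frac{1}{n},\end{cases}\qquad A_n=\frac{1}{\sqrt{\pi}}\,\frac{1}{(\log n)^{1/2}}\Bigl(1-\frac{1}{\log n}\Bigr)^{1/2},
$$
since the hypotheses here are precisely those of Proposition~\ref{pro:3.3}. From that proposition we already know that $w_n\in L^\infty(I)$, that $[w_n]^2_{H^{\frac{1}{2}}(\mathbb{R})}\to\pi$, and, using the ``at least $\gamma$-critical growth with $\gamma<1$'' hypothesis, that $\Phi(w_n)\to\infty$. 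All the delicate asymptotic bookkeeping for the Gagliardo seminorm has thus already been carried out; the only genuinely new point for the whole-space problem is to verify the full $H^{\frac{1}{2}}(\mathbb{R})$ normalization $\|w_n\|\leq 1$, which in addition to the seminorm involves the $L^2$-norm.

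For the seminorm part I would invoke Proposition~\ref{A.1} to write $\bigl|(-\Delta)^{\frac{1}{4}}w_n\bigr|_{L^2(\mathbb{R})}^2=\tfrac{1}{2} C_{1,\frac{1}{2}}[w_n]^2_{H^{\frac{1}{2}}(\mathbb{R})}$, so that with $C_{1,\frac{1}{2}}=\frac{1}{\pi}$ and $[w_n]^2\to\pi$ one gets $\bigl|(-\Delta)^{\frac{1}{4}}w_n\bigr|_{L^2(\mathbb{R})}^2\to\tfrac{1}{2}$. For the $L^2$-norm I would compute directly, using that $w_n$ is explicit and supported in $I$,
$$
|w_n|_2^2=2A_n^2\Bigl(\frac{(\log n)^2}{n}+\int_{1/n}^{1}(\log x)^2\,dx\Bigr)\leq 2A_n^2\Bigl(\frac{(\log n)^2}{n}+2\Bigr)\longrightarrow 0,
$$
since $A_n^2=\frac{1}{\pi\log n}\bigl(1-\frac{1}{\log n}\bigr)\to 0$ and $\int_0^1(\log x)^2\,dx=2$. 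Hence $\|w_n\|^2=\bigl|(-\Delta)^{\frac{1}{4}}w_n\bigr|_2^2+|w_n|_2^2\to\tfrac{1}{2}<1$, so $w_n\in E_\infty$ for all sufficiently large $n$ (and $w_n\in L^\infty(\mathbb{R})$ trivially).

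It remains to transfer the divergence of $\Phi$ to $\Psi$. Since $G$ satisfies $(A_1)$ we have $G(0)=0$, and since $w_n$ vanishes outside $I$ the function $G(w_n)$ is supported in $I$; consequently $\Psi(w_n)=\Phi(w_n)$ and $\Psi_\pm(w_n)=\Phi_\pm(w_n)$. By Corollary~\ref{cor:2.6}(i) the quantities $\Phi_\pm(w_n)$ are finite for every $n$, and by Proposition~\ref{pro:3.3} $\Phi(w_n)\to\infty$. Taking $u_n:=w_n$ then yields the desired sequence in $E_\infty\cap L^\infty(\mathbb{R})$. I do not expect a real obstacle: conceptually nothing new happens beyond Proposition~\ref{pro:3.3}, and the only point requiring attention is the elementary $L^2$-estimate above, together with the observation that the normalization $[w_n]^2\to\pi$ leaves ample room to absorb the (vanishing) $L^2$ contribution.
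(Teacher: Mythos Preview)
Your proposal is correct and follows exactly the same approach as the paper's proof, which simply says to reuse the sequence from Proposition~\ref{pro:3.3} and notes that $\Psi(u_n)=\Phi(u_n)$ and $\|u_n\|\leq 1$ for large $n$. You have merely supplied the details the paper omits: the explicit $L^2$-computation showing $|w_n|_2^2\to 0$ and the conversion $\bigl|(-\Delta)^{1/4}w_n\bigr|_2^2=\tfrac{1}{2\pi}[w_n]_{H^{1/2}}^2$, so that $\|w_n\|^2$ tends to a limit strictly below $1$. (One nitpick: Proposition~\ref{pro:3.3} only gives the \emph{upper} bound $[w_n]^2_{H^{1/2}(\mathbb{R})}\leq C_n\to\pi$, not exact convergence, but the inequality is all you need.)
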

\begin{proof}
    It suffices to take the sequence $\{u_n\}$ considered in the proof of Proposition \ref{pro:3.3}, since $\Psi\left(u_n\right)=\Phi\left(u_n\right)$ for every $n \in \mathbb{N}$ and $\left\|u_n\right\| \leq 1$ for $n$ sufficiently large.
\end{proof}

\section{Symmetry of the positive solutions }
In this section we prove Theorem \ref{th:1.6},  by the moving plane method. Let $G$ and $g:=G^{\prime}$ satisfy the assumptions of Theorem \ref{th:1.6}, and $(u, w)$ be a classical solution of the nonlinear system
\begin{equation}\label{eq:6.2}
    \left\{\begin{aligned}
(- \Delta)^{\frac{1}{2}} u+u & =\theta w g(u) & & \text { in } \mathbb{R} \\
(- \Delta)^{\frac{1}{2}}  w & = \pi G(u) & & \text { in } \mathbb{R}
\end{aligned}\right.
\end{equation}
satisfying
\begin{equation}\label{eq:6.3}
  u>0~\forall x\in\mathbb{R},~ u \in L^{\infty}\left(\mathbb{R}\right) \quad \text { and } \quad w(x) \rightarrow-\infty ~ \text { as }~|x| \rightarrow \infty,
\end{equation}
where $\theta> 0$ is a fixed Lagrangian multiplier.

Before using moving plane method, we need to prove the decay of the classical solution $u$ at infinity.

\subsection{The decay of the  solution  }

\begin{lemma}\label{le:6.1}
    Suppose that $u$ satisfies (\ref{eq:6.2}) and (\ref{eq:6.3}), then
    \begin{equation}\label{eq:6.4}
        u(x) \rightarrow 0 \quad \text { as }|x| \rightarrow \infty.
    \end{equation}
\end{lemma}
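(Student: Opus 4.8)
\emph{Plan of the proof.} The first step is to convert the hypothesis $w(x)\to-\infty$ into a one-sided equation for $u$ outside a large ball. Since $g$ is non-decreasing on $[0,\infty)$ with $g(0)=0$ we have $g\ge 0$ on $[0,\infty)$, hence $g(u)\ge 0$ because $u>0$; choosing $R_0>0$ with $w(x)\le 0$ for $|x|\ge R_0$, the first equation of \eqref{eq:6.2} gives
\[
(-\Delta)^{\frac12}u(x)+u(x)=\theta\, w(x)\,g(u(x))\le 0\qquad\text{for }|x|\ge R_0 ,
\]
so that $u$ is, on the exterior domain $\Omega_{R_0}:=\{|x|>R_0\}$, a (pointwise, classical) subsolution of $(-\Delta)^{\frac12}+\mathrm{Id}$.

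The second step is to manufacture a \emph{decaying} supersolution of $(-\Delta)^{\frac12}+\mathrm{Id}$ from the scaled family $\gamma_\rho(x):=\gamma(x/\rho)$ of the comparison function $\gamma(x)=\frac{x^2}{1+x^2}+1=2-\frac{1}{1+x^2}$. Computing $(-\Delta)^{\frac12}\gamma$ \emph{pointwise} — either from the bounded harmonic extension $2-\frac{1+y}{x^2+(1+y)^2}$ of $\gamma$ to the upper half-plane, or via the Fourier transform as in Lemma~\ref{le:6.2} — yields $(-\Delta)^{\frac12}\gamma(x)=\frac{x^2-1}{(1+x^2)^2}$, equivalently $(-\Delta)^{\frac12}\frac{1}{1+x^2}=\frac{1-x^2}{(1+x^2)^2}$. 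A short scaling computation then shows that for every $\rho\ge 1$ and every $\varepsilon>0$ the function
\[
B_{\rho,\varepsilon}(x):=\|u\|_{L^\infty}\bigl(2-\gamma_\rho(x)\bigr)+\varepsilon=\frac{\|u\|_{L^\infty}\,\rho^2}{\rho^2+x^2}+\varepsilon
\]
is smooth, bounded, tends to $\varepsilon$ as $|x|\to\infty$, and satisfies $(-\Delta)^{\frac12}B_{\rho,\varepsilon}+B_{\rho,\varepsilon}\ge\varepsilon>0$ on all of $\mathbb{R}$. The role of $\rho$ is that $B_{\rho,\varepsilon}\to\|u\|_{L^\infty}+\varepsilon$ uniformly on the bounded set $\{|x|\le R_0\}$ as $\rho\to\infty$, so that for $\rho$ large (depending on $R_0,\varepsilon$) we have $B_{\rho,\varepsilon}\ge\|u\|_{L^\infty}\ge u$ on all of $\{|x|\le R_0\}$.

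With these two steps done, set $z:=B_{\rho,\varepsilon}-u$. Then $z\ge 0$ on $\{|x|\le R_0\}$, $z$ is bounded, and on $\Omega_{R_0}$ one has $(-\Delta)^{\frac12}z+z=\bigl[(-\Delta)^{\frac12}B_{\rho,\varepsilon}+B_{\rho,\varepsilon}\bigr]-\bigl[(-\Delta)^{\frac12}u+u\bigr]\ge\varepsilon>0$. Invoking the maximum principle for $(-\Delta)^{\frac12}+\mathrm{Id}$ in the exterior domain $\Omega_{R_0}$ — which is available for merely bounded functions precisely because of the zeroth order term, and which is in any case needed for the moving-plane arguments below — we conclude $z\ge 0$ on $\Omega_{R_0}$, i.e.
\[
0<u(x)\le \frac{\|u\|_{L^\infty}\,\rho^2}{\rho^2+x^2}+\varepsilon\qquad\text{for }|x|\ge R_0 .
\]
Letting $|x|\to\infty$ gives $\limsup_{|x|\to\infty}u(x)\le\varepsilon$, and since $\varepsilon>0$ is arbitrary this yields $u(x)\to 0$, which is \eqref{eq:6.4}.

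\emph{Main obstacle.} Everything local here is routine; the difficulty is entirely nonlocal. A barrier that dominates $u$ at infinity cannot be a fixed multiple of $\gamma$, since any such multiple is bounded below by a positive constant; on the other hand a barrier controlling $u$ on $\{|x|\le R_0\}$ must be at least $\|u\|_{L^\infty}$ there, which need not be small. A single fixed supersolution therefore cannot do both jobs, and this is exactly what forces the dilation $\gamma_\rho=\gamma(\cdot/\rho)$: sending $\rho\to\infty$ flattens the barrier near the origin up to the level $\|u\|_{L^\infty}$ while preserving its decay at infinity \emph{and} keeping it a genuine pointwise supersolution — which is why the identity for $(-\Delta)^{\frac12}\gamma$ (equivalently for $(-\Delta)^{\frac12}(1+|x|^2)^{-1}$) must be established classically and pointwise, not merely distributionally. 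The second delicate point, which I would dispatch by citing a standard lemma, is the exterior-domain maximum principle: one has to rule out the infimum of $z$ escaping to infinity, and it is the mass term $+\,z$ (inherited from the $+\,u$ in \eqref{eq:6.2}) that makes this possible without any a priori decay on $z$.
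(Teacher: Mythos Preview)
Your barrier construction is correct: for $\rho\ge 1$ the function $B_{\rho,\varepsilon}$ is indeed a classical supersolution of $(-\Delta)^{1/2}+\mathrm{Id}$ on all of $\mathbb{R}$, and the pointwise identity $(-\Delta)^{1/2}\frac{1}{1+x^2}=\frac{1-x^2}{(1+x^2)^2}$ is exactly what the paper also computes. The reduction to a subsolution on $\{|x|>R_0\}$ is right as well.

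The gap is precisely the point you flag as ``a standard lemma'': the maximum principle for $(-\Delta)^{1/2}+\mathrm{Id}$ on the \emph{unbounded} exterior domain $\Omega_{R_0}$, applied to a function $z=B_{\rho,\varepsilon}-u$ that is merely bounded and whose behaviour at infinity is unknown. The usual argument --- a negative minimum of $z$ at $x^0$ forces $(-\Delta)^{1/2}z(x^0)\le 0$, contradicting $(-\Delta)^{1/2}z+z\ge\varepsilon$ --- only works if the infimum is \emph{attained}, and nothing prevents it from escaping to infinity here; indeed, ruling that out is equivalent to what you want to prove. This is not a one-line citation: the paper itself uses an exterior-domain comparison only later, in Lemma~\ref{le:6.2}, and there it explicitly invokes $u(x)\to 0$ (i.e.\ the present Lemma~\ref{le:6.1}) to guarantee the minimum is attained. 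So as written your argument is circular. It can be repaired --- for instance via the Bessel-kernel resolvent of $(-\Delta)^{1/2}+\mathrm{Id}$, or by a perturbation that forces an interior minimum --- but that is a genuine additional step, not standard.

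The paper avoids this difficulty altogether by a different device. It argues by contradiction: assuming $\lambda_\infty:=\limsup_{|x|\to\infty}u(x)>0$, it picks $x_n$ with $|x_n|\to\infty$, $u(x_n)\to\lambda_\infty$, sets $\lambda_n:=\sup_{\mathbb{R}\setminus I(x_n)}u$, and \emph{translates} (rather than dilates) the comparison function to $\gamma_n(x)=\tfrac{5}{6}\lambda_n\,\gamma(x-x_n)$. Since $\gamma\ge\tfrac{3}{2}$ on $\{|x|\ge 1\}$, one gets $\gamma_n\ge\tfrac{5}{4}\lambda_n\ge u$ on the complement $\mathbb{R}\setminus I(x_n)$, and the maximum principle is then applied only on the \emph{bounded} interval $I(x_n)=(x_n-1,x_n+1)$, where a negative interior minimum is ruled out directly by the integral definition. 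This yields $u(x_n)\le\gamma_n(x_n)=\tfrac{5}{6}\lambda_n$, which is then played off against the choice of $x_n$ to reach a contradiction. In short: same comparison function, same pointwise computation, but translation plus bounded-interval comparison instead of dilation plus exterior-domain comparison --- and that swap is exactly what dodges the issue you left open.
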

\begin{proof}
We prove it by contradiction, otherwise, there exist $\lambda_{\infty}:=\limsup _{|x| \rightarrow \infty} u(x)>0$, and we let $\{x_n\}$ be a sequence in $\mathbb{R}$ with $\left|x_n\right| \rightarrow \infty$ and $u\left(x_n\right) \rightarrow \lambda_{\infty}$ as $n \rightarrow \infty$. Then there exists $n_0 >0$ with
\begin{equation}\label{eq:6.5}
    \lambda_n:=\sup _{x \in [\mathbb{R}\setminus I\left(x_n\right)]} u(x)<\frac{7}{6} \lambda_{\infty}<\frac{6}{5} u\left(x_n\right) \quad \text { for } n \geq n_0,
\end{equation}
where
$$I(x_n
):=\{x\in\mathbb{R}: |x-x_n|<1\}.$$
By (\ref{eq:6.3}), we have
\begin{equation}\label{eq:6.5.1}
    (- \Delta)^{\frac{1}{2}} u+u=\theta w g(u) \leq 0
\end{equation}
in $I\left(x_n\right)$ for $n \geq n_0$, with $n_0$ sufficiently large if necessary.

Let $\gamma(x)=\frac{x^2}{1 +  x^2}+1$, we firstly verify $ (- \Delta)^{\frac{1}{2}}\gamma+\gamma \geq0$.
We calculate according to the definition of the fractional Laplace operator:
$$
(-\Delta)^{1/2} \gamma(x) = C_{1,\frac{1}{2}} \text{P.V.} \int_{-\infty}^{\infty} \frac{\gamma(x) - \gamma(y)}{|x - y|^2} dy,
$$
where \( C_{1,\frac{1}{2}}= \frac{1}{\pi} \) is the normalization constant. We see
\begin{equation}\label{eq:f1}
    \begin{aligned}
(-\Delta)^{1/2} \gamma(x) &= \frac{1}{\pi} \text{P.V.} \int_{-\infty}^{\infty}\frac{\frac{x^2 - y^2}{ (1 +  x^2)(1 +  y^2)}}{(x - y)^2}dy \\
&= \frac{1}{\pi} \text{P.V.} \int_{-\infty}^{\infty} \frac{x + y}{(x - y) (1 +  x^2)(1 +  y^2)} dy \\
&= \frac{1}{\pi(1 +  x^2)} \text{P.V.} \int_{-\infty}^{\infty} \frac{x + y}{(x - y) (1 +  y^2)} dy.
\end{aligned}
\end{equation}
Denote
$$
J(x) = \text{P.V.} \int_{-\infty}^{\infty} \frac{x + y}{(x - y) (1 +  y^2)} dy,
$$
we have
$$
\begin{aligned}
J(x) &= \text{P.V.} \int_{-\infty}^{\infty} \left[ \frac{2x}{(x - y)(1 +  y^2)} - \frac{1}{1 +  y^2} \right] dy \\
&= 2x \cdot \text{P.V.} \int_{-\infty}^{\infty} \frac{1}{(x - y)(1 +  y^2)} dy - \int_{-\infty}^{\infty} \frac{1}{1 +  y^2} dy\\
&=2x\cdot J_1(x)-\pi,
\end{aligned}
$$
and
$$
\begin{aligned}
J_1(x)&= \text{P.V.} \int_{-\infty}^{\infty} \frac{1}{(x - y)(1 +  y^2)} dy\\
& =\text{P.V.} \int_{-\infty}^{\infty} \frac{1}{1 +  x^2} \left[ \frac{1}{x - y} +  \frac{x + y}{1 +  y^2} \right] dy\\
&=\frac{1}{1 +  x^2}\text{P.V.}\int_{-\infty}^{\infty}  \frac{1}{x - y} dy+\frac{x}{1 +  x^2}\int_{-\infty}^{\infty}  \frac{1}{1 +  y^2}  dy+\frac{1}{1 +  x^2}\int_{-\infty}^{\infty}  \frac{y}{1 +  y^2}  dy\\
&=0+\frac{x}{1 +  x^2}\cdot\pi+0=\frac{\pi x}{1 +  x^2}.
\end{aligned}
$$
Thus we have
\begin{align*}
J(x)
=\frac{2\pi x^2}{1 +  x^2} -\pi.
\end{align*}
By \eqref{eq:f1}, we deduce that
\begin{align*}
(-\Delta)^{1/2} \gamma(x)
&=\frac{1}{\pi(1 +  x^2)} \cdot J(x)=\frac{2 x^2}{(1 +  x^2)^2} -\frac{1}{1 +  x^2}=\frac{ x^{2} - 1}{  (1 +  x^{2})^{2}}.
\end{align*}
Thus for $x\in\mathbb{R}$, we derive
$$
(- \Delta)^{\frac{1}{2}}\gamma+\gamma =\frac{ x^{2} - 1}{  (1 +  x^{2})^{2}}+\frac{x^2}{1 +  x^2}+1=\frac{ 2x^{2} (x^2+2)}{  (1 +  x^{2})^{2}}\geq0.
$$

Let's now consider the function
$$\gamma_n(x)=\frac{5}{6}\lambda_n  \gamma\left(x-x_n\right).$$
We see
\begin{equation}\label{eq:  6.5.2}
    (- \Delta)^{\frac{1}{2}}\gamma_n+\gamma_n =\frac{5}{6}\lambda_n\left((- \Delta)^{\frac{1}{2}}\gamma\left(x-x_n\right)+\gamma\left(x-x_n\right)\right)\geq0\quad\text{in }I(x_n).
\end{equation}
Take the difference between (\ref{eq:6.5.1}) and (\ref{eq:  6.5.2}), we have
\begin{equation}\label{20250619-e1}
 (- \Delta)^{\frac{1}{2}}\left(\gamma_n-u\right)+\left(\gamma_n-u\right) \geq 0 \quad \text { in } I\left(x_n\right) \text { for } n \geq n_0.
\end{equation}
Moreover, notice that $$\gamma(x)=\frac{x^2}{1 +  x^2}+1=2-\frac{1}{1+x^2}\geq \frac{3}{2},\quad\text{for }\forall |x|\geq1,$$
we get
\begin{equation}\label{eq:  6.5.25}
   \gamma_n(x) \geq\frac{5}{4} \lambda_n\geq u(x)\quad\text{in }\mathbb{R}\setminus I(x_n).
\end{equation}

Combining \eqref{20250619-e1} with \eqref{eq:  6.5.25}, we can use the maximum principle for the fractional Laplacian to obtain that $$\gamma_n-u \geq 0\quad\text{in}~I\left(x_n\right)~\text { for } n \geq n_0.$$
Indeed, if the function $v:=\gamma_n-u$ attain negative minimum in $I\left(x_n\right)$ for $n \geq n_0$, then by the continuity of $v$ there exists a $x^0\in I\left(x_n\right)$ such that
$$v(x^0)=\min_{I\left(x_n\right)}v<0.$$
And by \eqref{eq:  6.5.25}, we see that $v(y)\geq0$ for any $y\in\mathbb{R}\setminus I(x_n)$.
Therefore,
\begin{equation}\label{20250619-e2}
 (- \Delta)^{\frac{1}{2}}v(x^0)=\frac{1}{\pi} \text{P.V. }\left(\int_{I(x_n)}+\int_{\mathbb{R}\setminus I(x_n)}\right)\frac{v(x^0)-v(y)}{|x^0-y|^{2}}dy<0,~~ \text { for } n \geq n_0.
\end{equation}
On the other hand, from \eqref{20250619-e1}, we have
$$ (- \Delta)^{\frac{1}{2}}v(x^0)\geq -v(x^0)>0,$$
which contradicts with \eqref{20250619-e2}.

Therefore, $\gamma_n-u \geq 0$ in $I\left(x_n\right)$, and then $u\left(x_n\right) \leq \gamma_n\left(x_n\right)=\frac{5\lambda_n}{6} $ for $n \geq n_0$. Thus we have $\lambda_n\geq \frac{6}{5} u\left(x_n\right)$  for $n \geq n_0,$ that is contradict with (\ref{eq:6.5}). Hence, (\ref{eq:6.4}) holds.
\end{proof}

For $N\geq2$, P. Felmer, A. Quaas and J. Tan \cite{FQT} used the properties of  Bessel kernel to prove that:
there is a continuous functions $w$ in $\mathbb{R}^N$ satisfying
$$(-\Delta)^s w(x) + w(x)=0\quad \text{if }~|x| > 1$$
in the classical sense, and
$w(x) \leq c\frac{1}{|x|^{N+2s}} .$ Then by comparison arguments, the authors proved that the positive solution for the fraction equation
$$(-\Delta)^su+u=f(x,u),\quad\text{in}~\mathbb{R}^N;~\lim\limits_{|x|\to\infty}u(x)=0$$
satisfies the decay
$u=O(\frac{1}{|x|^{N+2s}}).$
This approach fails for one-dimension case.
However, we prove
\begin{lemma}\label{le:6.2}
     If u satisfies (\ref{eq:6.2}) and (\ref{eq:6.3}), then $u(x)=O\left(\frac{1}{|x|^{2}}\right)$ as $|x| \rightarrow \infty$.
\end{lemma}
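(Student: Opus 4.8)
The plan is to compare $u$ with a suitable multiple of the explicit barrier $\phi(x):=\frac{1}{1+x^{2}}$, which I will show is a strict supersolution of $(-\Delta)^{\frac12}+\mathrm{Id}$, and then to run the maximum-principle argument exactly as in the proof of Lemma~\ref{le:6.1}. First I would record the differential inequality satisfied by $u$ near infinity: since $g(0)=0$ and $g$ is non-decreasing on $[0,\infty)$ we have $g(u)\geq0$ on $\mathbb{R}$, and since $w(x)\to-\infty$ there is $R_{0}>0$ with $w<0$ on $\{|x|\geq R_{0}\}$, so that $\theta wg(u)\leq0$ there and the first equation of \eqref{eq:6.2} gives
$$(-\Delta)^{\frac12}u+u=\theta wg(u)\leq0\qquad\text{for }|x|\geq R_{0}.$$
Recall also that $u\in L^{\infty}(\mathbb{R})$ by \eqref{eq:6.3} and that $u(x)\to0$ as $|x|\to\infty$ by Lemma~\ref{le:6.1}.

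Next I would compute $(-\Delta)^{\frac12}\phi$ by means of the Fourier transform. Using $\mathscr{F}\phi(\xi)=\pi e^{-|\xi|}$, one has $(-\Delta)^{\frac12}\phi=\mathscr{F}^{-1}\!\left(|\xi|\,\mathscr{F}\phi\right)$, and evaluating the resulting inverse transform, which reduces to the elementary integral $\int_{0}^{\infty}\xi e^{-\xi}\cos(x\xi)\,d\xi=\mathrm{Re}\,(1-ix)^{-2}=\frac{1-x^{2}}{(1+x^{2})^{2}}$, yields $(-\Delta)^{\frac12}\phi(x)=\frac{1-x^{2}}{(1+x^{2})^{2}}$; the same identity can also be read off from the computation in Lemma~\ref{le:6.1}, since $\phi=2-\gamma$ and $(-\Delta)^{\frac12}$ annihilates constants in the principal-value sense. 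Either way,
$$(-\Delta)^{\frac12}\phi+\phi=\frac{1-x^{2}}{(1+x^{2})^{2}}+\frac{1}{1+x^{2}}=\frac{2}{(1+x^{2})^{2}}>0\qquad\text{on }\mathbb{R},$$
so $\phi$ is indeed a strict supersolution.

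Then I would fix $C:=|u|_{\infty}\,(1+R_{0}^{2})$, so that $u\leq C\phi$ on $\{|x|\leq R_{0}\}$, and set $v:=u-C\phi$. The function $v$ is continuous and bounded, $v(x)\to0$ as $|x|\to\infty$, $v\leq0$ on $\{|x|\leq R_{0}\}$, and, combining the two displays above, $(-\Delta)^{\frac12}v+v\leq-\frac{2C}{(1+x^{2})^{2}}<0$ for $|x|\geq R_{0}$. If $\sup_{\mathbb{R}}v>0$, then, $v$ being continuous and vanishing at infinity, the supremum is attained at some $x_{0}$, necessarily with $|x_{0}|>R_{0}$; at such a global maximum $(-\Delta)^{\frac12}v(x_{0})=\frac{1}{\pi}\,\mathrm{P.V.}\!\int_{\mathbb{R}}\frac{v(x_{0})-v(y)}{|x_{0}-y|^{2}}\,dy\geq0$, whence $(-\Delta)^{\frac12}v(x_{0})+v(x_{0})\geq v(x_{0})>0$, contradicting the inequality just displayed. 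Hence $v\leq0$ on $\mathbb{R}$, i.e. $0<u(x)\leq\frac{C}{1+x^{2}}$, which is exactly $u(x)=O(|x|^{-2})$ as $|x|\to\infty$.

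The step I expect to be the main obstacle is the computation of $(-\Delta)^{\frac12}\phi$: one must make sure that the Fourier-multiplier identity holds in the \emph{classical}, pointwise sense rather than merely distributionally, which amounts to checking that $\phi\in C^{1,1}_{loc}(\mathbb{R})\cap L_{1/2}(\mathbb{R})$ and invoking the standard equivalence of the two definitions of $(-\Delta)^{\frac12}$ on such functions (or, as noted, simply deducing the formula from Lemma~\ref{le:6.1}). Once $\phi$ is identified as a strict supersolution of $(-\Delta)^{\frac12}+\mathrm{Id}$, the comparison step is routine, the decisive structural ingredient being the negativity of $w$ at infinity furnished by \eqref{eq:6.3}, together with the maximum principle already established in Lemma~\ref{le:6.1}.
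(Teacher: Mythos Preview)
Your proposal is correct and follows essentially the same route as the paper: compute $(-\Delta)^{1/2}\phi$ for $\phi(x)=1/(1+x^{2})$ via the Fourier transform to obtain $(-\Delta)^{1/2}\phi+\phi=2/(1+x^{2})^{2}>0$, then compare a suitable multiple of $\phi$ with $u$ by the maximum principle for the half-Laplacian, using that $(-\Delta)^{1/2}u+u\leq0$ outside a large interval. Your choice of constant $C=|u|_{\infty}(1+R_{0}^{2})$ is in fact more careful than the paper's (which takes the multiple $\|u\|_{L^{\infty}}$ and asserts $\mu-u\geq0$ on $\overline{I_{d}}$, a minor slip), but the argument is otherwise identical.
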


\begin{proof} First,
by (\ref{eq:6.3}) we have
\begin{equation}\label{20250627-e5}
(- \Delta)^{\frac{1}{2}} u+u=\theta w g(u) \leq 0\quad\text{in} ~ \mathbb{R} \backslash \overline{I_d}
\end{equation}
for $d$ sufficiently large, where
$$I_d:=\{x\in\mathbb{R}: |x|<d\}.$$

Now, consider the function
$$
x \mapsto \mu(x)=\|u\|_{L^{\infty}\left(\mathbb{R}\right)} \frac{1}{1+|x|^{2}}:=\|u\|_{L^{\infty}\left(\mathbb{R}\right)} f(x).
$$
We  verify that $\mu$ satisfies $(- \Delta)^{\frac{1}{2}} \mu+\mu \geq 0$ in $\mathbb{R} \backslash \overline{I_d}$. Indeed,
$$\mathscr{F}(f)(\xi)  = \int_{-\infty}^{\infty} \frac{1}{1 + x^2} e^{-i2\pi x \xi}dx=\pi e^{-2\pi |\xi|},$$
therefore
$$\mathscr{F}\left[ (-\Delta)^{1/2} f \right](\xi) = |\xi| \mathscr{F}(f)(\xi)  = \pi |\xi| e^{-2\pi |\xi|},$$
and
$$(-\Delta)^{1/2} f =\mathscr{F}^{-1}\left[|\xi| \mathscr{F}(f)\right]= \int_{-\infty}^{\infty} \pi |\xi| e^{-2\pi |\xi|} e^{i2\pi x \xi}  d\xi=\frac{1 - x^2}{(1 + x^2)^2}.$$
Consequently, we have the estimate
$$
(- \Delta)^{\frac{1}{2}} \mu+\mu=\|u\|_{L^{\infty}\left(\mathbb{R}\right)}\left[\frac{1 - x^2}{(1 + x^2)^2}+\frac{1}{1+|x|^{2}}\right]=\|u\|_{L^{\infty}\left(\mathbb{R}\right)}\cdot\frac{2}{(1 + x^2)^2}\geq0.
$$
This together with \eqref{20250627-e5} implies that the function $\mu-u$ satisfies
\begin{equation}\label{20250627-e1}
(- \Delta)^{\frac{1}{2}} (\mu-u)+(\mu-u) \geq 0\quad\text{in} ~ \mathbb{R} \backslash \overline{I_d}.
\end{equation}
Moreover,  by the definition of $\mu$ that
\begin{equation}\label{20250627-e2}
\mu-u \geq 0 \quad\text{in} ~ \overline{I_d}
\end{equation}
and by Lemma \ref{le:6.1} that
\begin{equation}\label{20250627-e3}
\mu-u(x) \rightarrow 0 \quad\text{as} ~  |x| \rightarrow \infty.
\end{equation}
Combining \eqref{20250627-e1}-\eqref{20250627-e3}, we can use the maximum principle for the fractional Laplacian to obtain that $$\mu-u \geq 0\quad\text{in}~\mathbb{R}\setminus \overline{I_d}~\text { for } n \geq n_0.$$
Indeed, if the function $\mu-u$ attain negative minimum in $\mathbb{R}\setminus \overline{I_d}$ for $n \geq n_0$, then by \eqref{20250627-e3} and the continuity of $\mu-u$ there exists a $x^0\in \overline{I_d}$ such that
$$(\mu-u)(x^0)=\min_{\overline{I_d}}v<0.$$
And by \eqref{20250627-e2}, we see that $(\mu-u)(y)\geq0$ for any $y\in \overline{I_d}$.
Therefore,
\begin{equation}\label{20250627-e4}
 (- \Delta)^{\frac{1}{2}}(\mu-u)(x^0)=\frac{1}{\pi} \text{P.V. }\left(\int_{\overline{I_d}}+\int_{\mathbb{R}\setminus \overline{I_d}}\right)\frac{(\mu-u)(x^0)-(\mu-u)(y)}{|x^0-y|^{2}}dy<0,~~ \text { for } n \geq n_0.
\end{equation}
On the other hand, from \eqref{20250627-e1}, we have
$$ (- \Delta)^{\frac{1}{2}}(\mu-u)(x^0)\geq -(\mu-u)(x^0)>0,$$
which contradicts with \eqref{20250627-e4}.

Thus the function $\mu-u$ can not attain negative minimum in  $\mathbb{R} \backslash \overline{I_d}$. Therefore $u \leq \mu$ in $\mathbb{R} \backslash \overline{I_d}$, which implies that $u(x)=O\left(\frac{1}{|x|^{2}}\right)$ as $|x| \rightarrow \infty$.
\end{proof}

\begin{lemma}\label{le:6.25}
     If $(u,w)$ satisfies (\ref{eq:6.2}) and (\ref{eq:6.3}), then
\begin{equation}\label{1409-e12}
w(x)=O\left(\log|x|\right),\quad \text{as}~ |x| \rightarrow \infty.
     \end{equation}
\end{lemma}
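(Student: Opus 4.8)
The plan is to identify $w$, up to an additive constant, with the explicit logarithmic convolution $\tilde w:=\log\frac{1}{|\cdot|}*G(u)$, and then to estimate $\tilde w$ directly by means of the elementary inequality \eqref{eq:2.3}. First I would record the decay and integrability of the source term $G(u)$. By Lemma \ref{le:6.2} we have $u(x)=O(|x|^{-2})$ as $|x|\to\infty$, and since $g$ is Lipschitz on $[0,\infty)$ with $g(0)=0$ we get $|g(t)|\le Lt$, hence $|G(t)|=\left|\int_0^t g\right|\le \tfrac{L}{2}t^2$ for $t\ge 0$. Together with $u>0$ and $u\in L^\infty(\mathbb{R})$ (so that $G$ continuous gives $G(u)\in L^\infty(\mathbb{R})$), this yields $G(u(x))=O(|x|^{-4})$ as $|x|\to\infty$, so in particular $G(u)\in L^1(\mathbb{R})\cap L^1_{\log}(\mathbb{R})$ and $\tilde w$ is well defined pointwise.

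Next I would estimate $\tilde w$. Writing $\log\frac{1}{|t|}=\log^{+}\frac{1}{|t|}-\log^{+}|t|$, the positive-kernel part is bounded uniformly in $x$,
$$0\le \int_{\mathbb{R}}\log^{+}\frac{1}{|x-y|}G(u(y))\,dy\le \|G(u)\|_{L^\infty}\int_{|z|<1}\log\frac{1}{|z|}\,dz=:C_1,$$
while the negative-kernel part is controlled by \eqref{eq:2.3},
$$0\le \int_{\mathbb{R}}\log^{+}|x-y|G(u(y))\,dy\le \log(1+|x|)\,|G(u)|_1+|G(u)|_*=:C_2\log(1+|x|)+C_3.$$
Hence $-C_2\log(1+|x|)-C_3\le \tilde w(x)\le C_1$, so $\tilde w(x)=O(\log|x|)$ as $|x|\to\infty$, and in particular $\tilde w\in L_{1/2}(\mathbb{R})$ since $\log(1+|x|)(1+|x|^2)^{-1}\in L^1(\mathbb{R})$. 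As recalled in the discussion preceding Theorem \ref{th:1.6}, $\tilde w$ solves $(-\Delta)^{1/2}\tilde w=\pi G(u)$ in $\mathbb{R}$. Now set $h:=w-\tilde w$. Since $w\in C^{1,1}_{\text{loc}}(\mathbb{R})\cap L_{1/2}(\mathbb{R})$ as part of the notion of classical solution, and $\tilde w\in L_{1/2}(\mathbb{R})$ by the above, we have $h\in L_{1/2}(\mathbb{R})$ and $(-\Delta)^{1/2}h=\pi G(u)-\pi G(u)=0$ in $\mathbb{R}$. A Liouville-type theorem for the fractional Laplacian then forces $h$ to be affine; as a non-constant affine function on $\mathbb{R}$ does not lie in $L_{1/2}(\mathbb{R})$, $h$ is a constant (alternatively, a nonzero linear part would be incompatible with $w(x)\to-\infty$ in \eqref{eq:6.3}). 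Therefore $w=\tilde w+\text{const}$, and \eqref{1409-e12} follows from the estimate on $\tilde w$.

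The main obstacle is the identification step: a priori $w$ is only assumed to solve $(-\Delta)^{1/2}w=\pi G(u)$ together with $w(x)\to-\infty$, so one genuinely needs the Liouville theorem for $1/2$-harmonic functions (and the fact that both $w$ and the convolution $\tilde w$ belong to $L_{1/2}(\mathbb{R})$) in order to pin $w$ down to the explicit convolution modulo an additive constant. Once that is in place, the bound $w(x)=O(\log|x|)$ is an immediate consequence of \eqref{eq:2.3} and the fast decay of $G(u)$ obtained from Lemma \ref{le:6.2}.
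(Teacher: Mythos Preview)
Your proposal is correct but differs from the paper's proof in two ways. First, the paper simply takes $w=\log\frac{1}{|\cdot|}*G(u)$ as given (this is how $w$ is introduced in the discussion preceding Theorem~\ref{th:1.6} and recalled in Remark~\ref{re:6.8}), so the Liouville identification step you carry out is not needed within the paper's framework; your extra step is a legitimate justification but relies on a Liouville theorem for $\tfrac12$-harmonic functions in $L_{1/2}(\mathbb{R})$, which is available in the literature but not used in the paper. Second, for the convolution estimate the paper writes $\log\frac{1}{|x-y|}=-\log|x|-\log\big|1-\tfrac{y}{x}\big|$ and bounds the error $J(x)=\int G(u(y))\log\big|1-\tfrac{y}{x}\big|\,dy$ by splitting $\mathbb{R}$ into three regions ($|y|\le |x|/2$; $|y|>|x|/2$ with $|y-x|\ge|x|/2$; and $|y-x|<|x|/2$). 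Your split $\log=\log^+(1/\cdot)-\log^+$ together with \eqref{eq:2.3} is more direct and shorter; the paper's decomposition yields the slightly sharper statement $w(x)=-\|G(u)\|_1\log|x|+O(1)$, but for \eqref{1409-e12} your route is fully adequate.
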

\begin{proof}
  By Lemma \ref{le:6.2}, we have $u=O(\frac{1}{|x|^2})$.
  By our assumption, $g(0)=G(0)=0$ and  $g$ is Lipschitz continuous on $[0,\infty)$, we have $g(u)\leq C|u|$ and $G(u)\leq C|u|^2.$ Thus  \begin{equation}\label{1515}
    g(u)=O(\frac{1}{|x|^2})~ \text{and}~G(u)=O(\frac{1}{|x|^4}),\quad\text{as}~|x|\to\infty.
  \end{equation}
  By \eqref{1515}, it is easy to see that $G(u)\in L^1_{\log}(\mathbb{R})$.

Now, we estimate $w(x)$ as
  $$
  \begin{aligned}
  w= &\log\frac{1}{|x|} \ast G(u)=\int_{\mathbb{R}}\log\frac{1}{|x-y|}G(u(y))dy
  \\
  =&-\log|x|\int_{\mathbb{R}}G(u(y))dy-\int_{\mathbb{R}}G(u(y))\log\left|1-\frac{y}{x}\right|dy\\
  :=&-C\log|x|-J(x).
  \end{aligned}
  $$
 Fix $|x|>>1$. We divide $\mathbb{R}$ into three parts
 $$\begin{aligned}
I_1:=&\left\{|y|\leq\frac{1}{2}|x|\right\},~~I_2:=\left\{|y|>\frac{1}{2}|x|\right\}\cap\left\{|y-x|\geq\frac{|x|}{2}\right\},\\
 I_3:=&\left\{|y|>\frac{1}{2}|x|\right\}\cap\left\{|y-x|<\frac{|x|}{2}\right\},\end{aligned}$$
 and then
 $$J(x)=\left(\int_{I_1}+\int_{I_2}+\int_{I_3}\right)G(u(y))\log\left|1-\frac{y}{x}\right|dy:=J_1+J_2+J_3.$$
 (i) For $y\in I_1$, we see $\left|\log|1-\frac{y}{x}|\right|\leq C\frac{|y|}{|x|}
 $. Then
 $$|J_1(x)|\leq \frac{C}{|x|}\int_{I_1}G(u(y))|y|dy\leq\frac{C}{|x|}.$$
 (ii) For $y\in I_2$, we have $\left|\log|1-\frac{y}{x}|\right|<\log3+\log(1+|y|).
 $ Thus
 $$|J_2(x)|\leq \log3\int_{I_2}G(u(y))dy+\int_{I_2}G(u(y))\log(1+|y|)dy\leq C.$$
(iii) For $y\in I_3$, we infer $\frac{1}{2}|x|<|y|<\frac{3}{2}|x|$ and $\left|\log|1-\frac{y}{x}|\right| \leq 2 \frac{|y-x|}{|x|}$.
 Then by \eqref{1515},
  $$
  \begin{aligned}
   |J_3(x)|\leq &\frac{2}{|x|}\int_{I_3}|G(u(y))||y-x|dy\\
   \leq &\frac{C}{|x|^5}\int_{I_3}|y-x|dy\\
   \leq &\frac{C}{|x|^5}\int_0^{\frac{|x|}{2}}zdz=\frac{C}{|x|^3}.
  \end{aligned}
  $$

Combining (i)-(iii), we get $|J(x)|\leq C$ for $|x|\to\infty.$
Therefore, $w=-C\log|x|-J(x)=O(\log|x|)$ as $|x|\to\infty$.

\end{proof}

\begin{lemma}\label{le:6.256}
     If $(u,w)$ satisfies (\ref{eq:6.2}) and (\ref{eq:6.3}), then
\begin{equation}\label{1409-e1}
     [u]_{H^{\frac{1}{2}}(\mathbb{R})}<\infty.
     \end{equation}
\end{lemma}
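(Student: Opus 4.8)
The plan is to rewrite \eqref{eq:6.2} as a linear fractional equation with an $L^2$ right--hand side and then read off $[u]_{H^{1/2}(\mathbb{R})}<\infty$ from the Fourier characterisation in Proposition \ref{A.1}. Set
$$
h:=\theta\, w\, g(u)-u,
$$
so that $u$ solves $(-\Delta)^{\frac12}u=h$ in $\mathbb{R}$. Since $u$ is bounded we have $u\in L_{1/2}(\mathbb{R})$, and by standard interior regularity for the half--Laplacian applied to \eqref{eq:6.2} (whose right--hand side is locally Hölder, $w$ being continuous and $g$ Lipschitz) $u$ is $C^{1,1}_{\mathrm{loc}}$, so this identity also holds in the distributional sense. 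Assuming for the moment that $h\in L^2(\mathbb{R})$, passing to Fourier variables gives $|\xi|\,\widehat u(\xi)=\widehat h(\xi)\in L^2$, while $\widehat u\in L^2$ because $u\in L^2(\mathbb{R})$. Splitting
$$
\int_{\mathbb{R}}|\xi|\,|\widehat u(\xi)|^2\,d\xi=\int_{|\xi|\le 1}|\xi|\,|\widehat u|^2\,d\xi+\int_{|\xi|>1}|\xi|\,|\widehat u|^2\,d\xi\le \|\widehat u\|_2^2+\bigl\||\xi|\widehat u\bigr\|_2^2<\infty,
$$
we conclude $(-\Delta)^{\frac14}u\in L^2(\mathbb{R})$, which by Proposition \ref{A.1} is precisely $[u]_{H^{1/2}(\mathbb{R})}<\infty$.

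It remains to collect the decay information and verify $h\in L^2(\mathbb{R})$. First, $u\in L^2(\mathbb{R})$: by Lemma \ref{le:6.2}, $u(x)=O(|x|^{-2})$ as $|x|\to\infty$, so $u^2$ is integrable at infinity, and $u\in L^\infty(\mathbb{R})$ takes care of a bounded neighbourhood of the origin. Next, since $g$ is Lipschitz on $[0,\infty)$ with $g(0)=0$ and $u>0$, we have $0\le g(u(x))\le C\,u(x)=O(|x|^{-2})$; together with Lemma \ref{le:6.25}, which gives $w(x)=O(\log|x|)$ as $|x|\to\infty$, this yields
$$
\bigl|w(x)\,g(u(x))\bigr|=O\!\left(\frac{\log|x|}{|x|^{2}}\right)\qquad\text{as }|x|\to\infty,
$$
whose square is integrable at infinity. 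On any bounded set, $w=\log\frac{1}{|\cdot|}*G(u)\in L^\infty_{\mathrm{loc}}(\mathbb{R})$, because $G(u)\in L^1_{\log}(\mathbb{R})\cap L^\infty(\mathbb{R})$ (using $G(u)\le C u^2=O(|x|^{-4})$, boundedness of $u$, and \eqref{eq:2.4}), while $g(u)$ is bounded; hence $w\,g(u)\in L^\infty_{\mathrm{loc}}(\mathbb{R})$. Combining the local and the far--field bounds gives $w\,g(u)\in L^2(\mathbb{R})$, and therefore $h=\theta\, w\, g(u)-u\in L^2(\mathbb{R})$, as required.

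The main obstacle is the \emph{borderline} nature of the decay: the forcing term $w\,g(u)$ decays only like $\log|x|\,|x|^{-2}$, so its membership in $L^2(\mathbb{R})$ — and hence the whole argument — hinges on having both the quadratic decay $u=O(|x|^{-2})$ from Lemma \ref{le:6.2} and the merely logarithmic growth of $w$ from Lemma \ref{le:6.25}; the weaker bound $u=O(|x|^{-1/2})$ that would follow for free from $u\in L^2$ and radial monotonicity is not enough. A secondary technical point is the identification of the pointwise equation $(-\Delta)^{\frac12}u=h$ with the distributional one, which is where the facts $u\in C^{1,1}_{\mathrm{loc}}(\mathbb{R})$ and $u\in L_{1/2}(\mathbb{R})$ are used.
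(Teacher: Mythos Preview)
Your proof is correct and rests on the same decay information as the paper's (Lemmas \ref{le:6.2} and \ref{le:6.25}), but the finishing step is different. The paper multiplies the equation by $u$: setting $f=\theta w g(u)$, it observes that $(f-u)u\in L^1(\mathbb{R})$ from the decay $u=O(|x|^{-2})$ and $f=O(\log|x|\,|x|^{-2})$, and then invokes the bilinear identity $\int_{\mathbb{R}}(-\Delta)^{1/2}u\cdot u\,dx=\tfrac12 C_{1,1/2}[u]^2_{H^{1/2}}$ directly. You instead show the stronger fact $h=f-u\in L^2(\mathbb{R})$ and pass through the Fourier side, reading off $|\xi|^{1/2}\widehat u\in L^2$ from $|\xi|\widehat u=\widehat h\in L^2$ and $\widehat u\in L^2$. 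Your route is a touch more self-contained, since it sidesteps the question of justifying the Gagliardo--type integration by parts for a $u$ not yet known to lie in $H^{1/2}$; the paper's route is shorter but tacitly relies on that identity holding for the classical solution. Either way the substance is the same, and your discussion of the borderline decay is accurate.
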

\begin{proof}
Let $f(x)=\theta w g(u)$, then by Lemma \ref{le:6.2} and Lemma \ref{le:6.25}, we have $u=O(\frac{1}{|x|^2})$ and $f(x)=O(\frac{\log|x|}{|x|^2})$ as $|x|\to\infty$.    Thus,
  $$(-\Delta)^{\frac{1}{2}}u u=(f-u)u\in L^1(\mathbb{R}).$$
Then we use
  \begin{align*}
  \int_{\mathbb{R}}(- \Delta)^{\frac{1}{2}} u udx
=&C_{1,\frac{1}{2}}\text{P.V.}\int_{\mathbb{R}}\int_{\mathbb{R}}\frac{u(x)^2-u(y)u(x)}{|x-y|^{2}}dy dx  \\
=&C_{1,\frac{1}{2}}\text{P.V.}\int_{\mathbb{R}}\int_{\mathbb{R}}\frac{(u(y))^2-u(x)u(y)}{|x-y|^{2}}dx dy\\
=&\frac{1}{2}C_{1,\frac{1}{2}}\text{P.V.}\int_{\mathbb{R}}\int_{\mathbb{R}}\frac{(u(x)-u(y))^2}{|x-y|^{2}}dy dx\\
=&\frac{1}{2}C_{1,\frac{1}{2}}[u]^2_{H^{\frac{1}{2}}(\mathbb{R})}
  \end{align*}
 to infer \eqref{1409-e1}.
\end{proof}

\subsection{Symmetry of the  solution }

By Lemma \ref{le:6.2} and Lemma \ref{le:6.256}, it is easy to verify that $u\in H^{\frac{1}{2}}(\mathbb{R}).$ Here, $u$ is the classic positive solution of (\ref{eq:6.2})-(\ref{eq:6.3}).
We now proceed to the proof of Theorem \ref{th:1.6} using the moving plane method, note
$$
\begin{aligned}
\Sigma_\lambda&:=\left\{x \in \mathbb{R}: x_1<\lambda\right\}, \\
T_\lambda=&\partial \Sigma_\lambda=\left\{x \in \mathbb{R}:x_1=\lambda\right\},\\
x^\lambda&=2\lambda-x,\quad\text{for }\lambda \in \mathbb{R} ,
\end{aligned}
$$
and
$$
u^\lambda(x)=u\left(x^\lambda\right), \quad w^\lambda(x)=w\left(x^\lambda\right) \quad \text { for } x \in \mathbb{R}, \lambda \in \mathbb{R} ,
$$
$$
u_\lambda=u^\lambda-u, \quad w_\lambda=w^\lambda-w.
$$
Obviously, the difference functions $u_\lambda$ and $w_\lambda$ satisfy the system
\begin{equation}\label{eq:6.7}
    \left\{\begin{aligned}
(- \Delta)^{\frac{1}{2}} u_\lambda+u_\lambda & =\theta\left[w_\lambda g\left(u^\lambda\right)+h_\lambda(x) w u_\lambda\right] \\
(- \Delta)^{\frac{1}{2}} w_\lambda & = K_\lambda(x) u_\lambda
\end{aligned} \quad \text { in } \Sigma_\lambda,\right.
\end{equation}
where
$$
h_\lambda(x):= \begin{cases}\frac{g\left(u^\lambda(x)\right)-g(u(x))}{u_\lambda(x)}, & \text { for } u_\lambda(x) \neq 0, \\ 0, & \text { for } u_\lambda(x)=0,\end{cases}
$$
and
$$
K_\lambda(x):= \begin{cases}\frac{G\left(u^\lambda(x)\right)-G(u(x))}{u_\lambda(x)}, & \text { for } u_\lambda(x) \neq 0 ,\\ 0, & \text { for } u_\lambda(x)=0.\end{cases}
$$
\begin{remark}\label{re:6.8}
    (i) Recall that
    $$  w(x)=\int_{\mathbb{R}} \log \frac{1}{|x-y|}  G(u(y)) d y,$$
    by the anti-symmetry of $u_\lambda$ and $w_\lambda$, we have
    $$  w_\lambda(x)=\int_{\mathbb{R}} \log \frac{1}{|x-y|} K_\lambda(y) u_\lambda(y) d y=\int_{\Sigma_\lambda} \log \frac{\left|x-y^\lambda\right|}{|x-y|} K_\lambda(y) u_\lambda(y) d y.$$
    Moreover,
    $  w_\lambda(x)\geq 0 \text { in } \Sigma_\lambda$ if  $u_\lambda \geq 0 \text { in } \Sigma_\lambda$. Indeed, since $G$ is a non-decreasing function, we have $G(u^\lambda)\geq G(u)$ from $u_\lambda=u^\lambda-u \geq 0$, and then $K_\lambda\geq0$. Therefore, we deduce that  $ w_\lambda \geq 0 \text { in } \Sigma_\lambda$ by $\log \frac{\left|x-y^\lambda\right|}{|x-y|}>0$ for every $x, y \in \Sigma_\lambda$.

In addition, by the expression of $w_\lambda$, if $u_\lambda \equiv0$, we have $w_\lambda \equiv 0$ in $\Sigma_\lambda$, if $u_\lambda>0$, then $w_\lambda>0 $ in $\Sigma_\lambda$.

     (ii) By our assumption, $g$ is Lipschitz continuous, then there exists a constant $C=C(u)>0$ such that $    \left\|h_\lambda\right\|_{L^{\infty}\left(\Sigma_\lambda\right)} \leq C ,$ for $  \lambda \in \mathbb{R}.$
\end{remark}

Define $v^{+(-)}:=\max(\min) \{v, 0\}$ the non-negative (non-positive) part of $v$.
For every $\lambda \in \mathbb{R}$, we have the following estimate.
 \begin{lemma}\label{le:6.3}
 There exists a constant $\mu>0$ such that
$$
\left\|w_\lambda^{-}\right\|_{L^2\left(\Sigma_\lambda\right)} \leq \mu c_\lambda\left\|u_\lambda^{-}\right\|_{L^2\left(\Sigma_\lambda\right)} \quad \text { for all } \lambda \in \mathbb{R},
$$
where
$$
c_\lambda:=\left(\int_{\Sigma_\lambda^-}\left|\lambda-y_1\right|\left[g\left(\xi_\lambda(y)\right)\right]^2d y\right)^{\frac{1}{2}} \quad \text { and } \quad \Sigma_\lambda^-:=\left\{x \in \Sigma_\lambda: u_\lambda(x)<0\right\},
$$
where $\xi_\lambda(x)=u(x)+\gamma_\lambda(x) u^\lambda(x)$, $\gamma_\lambda(x) \in( 0,1)$.
\end{lemma}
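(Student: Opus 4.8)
The plan is to start from the representation recorded in Remark~\ref{re:6.8},
$$
w_\lambda(x)=\int_{\Sigma_\lambda}N_\lambda(x,y)\,K_\lambda(y)\,u_\lambda(y)\,dy ,\qquad N_\lambda(x,y):=\log\frac{|x-y^\lambda|}{|x-y|},
$$
and to exploit the two facts also noted there: $K_\lambda\ge0$ (because $G$ is nondecreasing) and $N_\lambda(x,y)>0$ for $x,y\in\Sigma_\lambda$. Since the portion of the integral over $\{u_\lambda\ge0\}$ is then nonnegative, for every $x\in\Sigma_\lambda$ one gets
$$
|w_\lambda^-(x)|\le\int_{\Sigma_\lambda^-}N_\lambda(x,y)\,K_\lambda(y)\,|u_\lambda(y)|\,dy ,
$$
and the mean value theorem for $G$ (with $g=G'$) writes $K_\lambda(y)=g(\xi_\lambda(y))$, with $\xi_\lambda(y)$ between $u(y)$ and $u^\lambda(y)$ as in the statement; here $0\le g(\xi_\lambda(y))\le g(\|u\|_{L^\infty(\mathbb{R})})<\infty$, since $g$ is nondecreasing, Lipschitz, and $g(0)=0$.

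The crux is the exact value of the $L^2_x$-norm of the kernel, which in dimension one turns out to be finite. For $x,y\in\Sigma_\lambda$ put $s:=\lambda-x_1>0$ and $t:=\lambda-y_1>0$; then $|x-y^\lambda|=s+t$ and $|x-y|=|s-t|$, so $N_\lambda(x,y)=\log\tfrac{s+t}{|s-t|}$, and the scaling $s=t\sigma$ gives
$$
\|N_\lambda(\cdot,y)\|_{L^2(\Sigma_\lambda)}^2=\int_0^\infty\Big(\log\tfrac{s+t}{|s-t|}\Big)^2ds=C_0\,|\lambda-y_1| ,\qquad C_0:=\int_0^\infty\Big(\log\tfrac{1+\sigma}{|1-\sigma|}\Big)^2d\sigma .
$$
That $C_0<\infty$ is seen by splitting $(0,\infty)$ at $\tfrac12$ and at $2$: the integrand is bounded on $(0,\tfrac12)$, behaves like $(\log|1-\sigma|)^2$ near $\sigma=1$ and is therefore integrable on $(\tfrac12,2)$, and on $(2,\infty)$ the elementary inequality $\log(1+z)\le\sqrt z$, applied with $z=\tfrac{2}{\sigma-1}$, yields $\big(\log\tfrac{1+\sigma}{\sigma-1}\big)^2\le\tfrac{4}{(\sigma-1)^2}$. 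This is exactly where dimension one helps: for $N\ge2$ the corresponding second moment of $\log\frac{|x-y^\lambda|}{|x-y|}$ diverges, which is why the two-dimensional argument of \cite{18} does not transfer verbatim.

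It remains to assemble the pieces. By Minkowski's integral inequality,
$$
\|w_\lambda^-\|_{L^2(\Sigma_\lambda)}\le\int_{\Sigma_\lambda^-}\|N_\lambda(\cdot,y)\|_{L^2(\Sigma_\lambda)}\,g(\xi_\lambda(y))\,|u_\lambda(y)|\,dy=\sqrt{C_0}\int_{\Sigma_\lambda^-}|\lambda-y_1|^{1/2}\,g(\xi_\lambda(y))\,|u_\lambda(y)|\,dy ,
$$
and then by the Cauchy--Schwarz inequality in $y$,
$$
\|w_\lambda^-\|_{L^2(\Sigma_\lambda)}\le\sqrt{C_0}\Big(\int_{\Sigma_\lambda^-}|\lambda-y_1|\,g(\xi_\lambda(y))^2\,dy\Big)^{1/2}\Big(\int_{\Sigma_\lambda^-}|u_\lambda(y)|^2\,dy\Big)^{1/2}=\sqrt{C_0}\;c_\lambda\,\|u_\lambda^-\|_{L^2(\Sigma_\lambda)} ,
$$
which is the assertion with $\mu=\sqrt{C_0}$. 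All the integrals are finite a~posteriori: $u\in L^\infty(\mathbb{R})$ and the decay $u(x)=O(|x|^{-2})$ from Lemma~\ref{le:6.2}, together with the Lipschitz bound $g(t)\le C|t|$, make $|\lambda-y_1|\,g(\xi_\lambda)^2$ integrable over $\Sigma_\lambda^-$ so that $c_\lambda<\infty$, while $u\in H^{1/2}(\mathbb{R})\subset L^2(\mathbb{R})$ gives $u_\lambda^-\in L^2(\Sigma_\lambda)$. The only genuinely delicate point, apart from the reflection bookkeeping behind the representation of $w_\lambda$ in Remark~\ref{re:6.8}, is the evaluation and finiteness of the constant $C_0$; everything else is a soft application of Minkowski's and the Cauchy--Schwarz inequalities.
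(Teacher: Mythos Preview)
Your argument is correct and reaches the same conclusion as the paper, but by a genuinely different route. The paper first replaces the logarithmic kernel by a Riesz-type kernel via the elementary inequality $\log(1+z)\le\sqrt{z}$, obtaining
\[
|w_\lambda^-(x)|\le\sqrt{2}\int_{\Sigma_\lambda^-}\frac{(\lambda-y_1)^{1/2}}{|x-y|^{1/2}}\,g(\xi_\lambda(y))\,|u_\lambda^-(y)|\,dy,
\]
and then applies the Hardy--Littlewood--Sobolev inequality (with $\alpha=\tfrac12$, $N=1$, mapping $L^1\to L^2$) followed by Cauchy--Schwarz. You instead keep the exact reflection kernel $N_\lambda(x,y)=\log\frac{|x-y^\lambda|}{|x-y|}$, observe that its $L^2_x$-norm scales as $\sqrt{C_0}\,|\lambda-y_1|^{1/2}$ with the explicit finite constant $C_0=\int_0^\infty\big(\log\frac{1+\sigma}{|1-\sigma|}\big)^2\,d\sigma$, and conclude via Minkowski's integral inequality and Cauchy--Schwarz. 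Your approach is more self-contained and yields an explicit constant $\mu=\sqrt{C_0}$; the paper's approach, by passing through $|x-y|^{-1/2}$, packages the analysis into a standard HLS bound but at the cost of invoking a heavier tool and losing the constant. Both proofs hinge on the same one-dimensional phenomenon---that the relevant kernel norm is finite---which is precisely what fails in higher dimensions and motivates the paper's remark that the argument of \cite{18} does not transfer.
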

\begin{proof}
First, by the elementary inequality
$$
\log{(1+|x|)}\leq\sqrt{|x|}\quad \forall~ x\in\mathbb{R}
$$
and Remark \ref{re:6.8} (i), for $x,y\in \Sigma_\lambda$ we have the estimate\\
(pay attention to $w_\lambda^-\leq0$, $u_\lambda^-\leq0$, $\log\frac{|x-y^\lambda|}{|x-y|}>0,~K_\lambda(y)\geq0$ for $x,y\in\Sigma_\lambda$)
\begin{equation}\label{eq:6.8}
    \begin{aligned}
    0\leq -w_\lambda^{-}(x) &=-\int_{\Sigma_\lambda^-} \log \frac{\left|x-y^\lambda\right|}{|x-y|} K_\lambda(y) u_\lambda^-(y) d y\\
&\leq-\int_{\Sigma_\lambda^-} \log  \left(1+\frac{\left|y-y^\lambda\right|}{|x-y|}\right) K_\lambda(y) u_\lambda^-(y) d y\\
&\leq-\int_{\Sigma_\lambda^-} \left(\frac{\left|y-y^\lambda\right|}{|x-y|} \right)^\frac{1}{2}K_\lambda(y) u_\lambda^-(y) d y\\
&=-\int_{\Sigma_\lambda^-} \left(\frac{2\left(\lambda-y_1\right)}{|x-y|} \right)^\frac{1}{2}K_\lambda(y) u_\lambda^{-}(y) d y\\
&=-\sqrt{2} \int_{\Sigma_\lambda^-} \frac{\left(\lambda-y_1\right)^\frac{1}{2}}{|x-y|^\frac{1}{2}} g\left(\xi_\lambda(y)\right) u_\lambda^{-}(y) d y,
\end{aligned}
\end{equation}
where $\xi_\lambda(x)=u(x)+\gamma_\lambda(x) u^\lambda(x)$ with $\gamma_\lambda(x) \in( 0,1).$
Take the $L^2$ norm on $\Sigma_\lambda$ of both sides of inequality (\ref{eq:6.8}), we have
\begin{equation}\label{eq:6.9}
\begin{aligned}
\| w_\lambda^{-}\|_{L^2(\Sigma_\lambda)}&\leq\sqrt{2}\left\| \int_{\Sigma_\lambda^-} \frac{\left(\lambda-y_1\right)^\frac{1}{2}}{|\cdot-y|^\frac{1}{2}} g\left(\xi_\lambda(y)\right) u_\lambda^{-}(y) d y\right\|_{L^2(\Sigma_\lambda)}.
\end{aligned}
\end{equation}
By the Hardy-Littlewood-Sobolev inequality \cite[Theorem 4.3]{Lieb}
$$\left\|\frac{1}{|\cdot|^\alpha}\ast f\right\|_{L^r(\mathbb{R}^N)}\leq C\|f\|_{L^p(\mathbb{R}^N)},\quad \frac{1}{p}+\frac{\alpha}{N}=1+\frac{1}{r}$$
and the Cauchy-Schwarz inequality, we have
\begin{equation}\label{eq:6.10}
\begin{aligned}
       & \left\| \int_{\Sigma_\lambda^-} \frac{\left(\lambda-y_1\right)^\frac{1}{2}}{|\cdot-y|^\frac{1}{2}} g\left(\xi_\lambda(y)\right) u_\lambda^{-}(y) d y\right\|_{L^2(\Sigma_\lambda)}\\
   \leq&\mu'\int_{\Sigma_\lambda^{-}} \left| \left(\lambda-y_1\right)^\frac{1}{2} g\left(\xi_\lambda(y)\right) u_\lambda^{-}(y) \right|dy\\
    \leq &\mu'\left(\int_{\Sigma_\lambda^{-}} \left|\lambda-y_1\right|g\left(\xi_\lambda(y)\right)^2dy\right)^{\frac{1}{2}}\cdot\|u^-\|_{L^2(\Sigma_\lambda)}\\
    =&\mu' c_\lambda\|u^-\|_{L^2(\Sigma_\lambda)},
\end{aligned}
\end{equation}
with the constant $\mu'>0$ independent of $\lambda$.
Combining (\ref{eq:6.8}), (\ref{eq:6.9}), (\ref{eq:6.10}), we claim that
$$
\left\|w_\lambda^{-}\right\|_{L^2\left(\Sigma_\lambda\right)} \leq \mu c_\lambda\left\|u_\lambda^{-}\right\|_{L^2\left(\Sigma_\lambda\right)} \quad \text { for all } \lambda \in \mathbb{R},
$$
with $\mu=\sqrt{2}\mu'>0$ independent of $\lambda$.
\end{proof}

\begin{remark}\label{re:6.3}
Fix $\lambda\in\mathbb{R}$. By Lemma \ref{le:6.2}, we have $\xi_\lambda(x)=O(\frac{1}{x^2})$ as $|x|\to\infty$. By our assumption, $g(0)=0$ and $g$ is Lipschitz continuous, thus $g(\xi_\lambda)\leq C|\xi_\lambda|$. Thus $c_\lambda$ obtained in Lemma \ref{le:6.3} is well defined, i.e., $c_\lambda(x)<\infty$ for any $x\in\mathbb{R}$.

Let $x\in\Sigma_\lambda$. By Lemma \ref{le:6.1}, we have $\xi_\lambda(x)\rightarrow0$ as $|\lambda|\rightarrow\infty$. Combining this with the assumption that $g(0)=0$, we deduce that $c_\lambda \rightarrow 0$ as $|\lambda| \rightarrow \infty.$
\end{remark}

 \textbf{For the regular Laplacian, we see that
 $$\|u^-\|^2_{H^{1}_0(\Omega)}=\int_{\Omega}(- \Delta u)  u^-dx+\int_{\Omega}u  u^-dx.$$
 The equality cannot be generalized to fractional Laplacian cases due to the non-locality of the fractional Laplacian. However,}
 \begin{lemma} \label{le:5.8.1}For any $u\in H^s(\mathbb{R}^N)$,
\begin{equation}\label{0702-e1}
   \|u^-\|^2_{H^{s}({\mathbb{R}^N)}}\leq2C_{N,s}^{-1}\int_{\mathbb{R}^N}(- \Delta)^{s} u u^-dx+\int_{\mathbb{R}^N}uu^-dx.
    \end{equation}
 \end{lemma}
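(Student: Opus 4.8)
The plan is to reduce the inequality to two elementary pointwise facts combined with the polarized form of the bilinear identity underlying Proposition~\ref{A.1}. Throughout write $v^{+}=\max\{v,0\}\ge 0$ and $v^{-}=\min\{v,0\}\le 0$, so that $v=v^{+}+v^{-}$, $v^{+}v^{-}=0$, and both $t\mapsto t^{+}$ and $t\mapsto t^{-}$ are nondecreasing and $1$-Lipschitz on $\mathbb{R}$. The first fact is that $a\,a^{-}=(a^{+}+a^{-})a^{-}=(a^{-})^{2}$ for every $a\in\mathbb{R}$, so that $\int_{\mathbb{R}^{N}}u\,u^{-}\,dx=\int_{\mathbb{R}^{N}}(u^{-})^{2}\,dx=\|u^{-}\|_{L^{2}(\mathbb{R}^{N})}^{2}$. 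The second fact is the pointwise monotonicity inequality: for all $a,b\in\mathbb{R}$,
$$ (a-b)(a^{-}-b^{-})=(a^{+}-b^{+})(a^{-}-b^{-})+(a^{-}-b^{-})^{2}\ge (a^{-}-b^{-})^{2}, $$
since if $a\ge b$ then $a^{+}-b^{+}\ge 0$ and $a^{-}-b^{-}\ge 0$, while if $a\le b$ both are $\le 0$, so the cross term is always nonnegative; moreover $|a^{-}-b^{-}|\le|a-b|$.

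Next I would verify that $u^{-}$ is an admissible test function. From $|u^{-}|\le|u|$ and $|u^{-}(x)-u^{-}(y)|\le|u(x)-u(y)|$ we get $u^{-}\in L^{2}(\mathbb{R}^{N})$ and $[u^{-}]_{W^{s,2}(\mathbb{R}^{N})}\le[u]_{W^{s,2}(\mathbb{R}^{N})}<\infty$; hence by Proposition~\ref{A.1}, $u^{-}\in H^{s}(\mathbb{R}^{N})$ and $(-\Delta)^{s/2}u^{-}\in L^{2}(\mathbb{R}^{N})$. In particular the quantity on the right of \eqref{0702-e1}, interpreted through the $L^{2}$ pairing $\int_{\mathbb{R}^{N}}(-\Delta)^{s}u\,u^{-}\,dx:=\int_{\mathbb{R}^{N}}(-\Delta)^{s/2}u\,(-\Delta)^{s/2}u^{-}\,dx$, is finite. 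Then I would invoke the polarization of the identity in Proposition~\ref{A.1}: for $u,\varphi\in H^{s}(\mathbb{R}^{N})$,
$$ \int_{\mathbb{R}^{N}}(-\Delta)^{s/2}u\,(-\Delta)^{s/2}\varphi\,dx=\frac{C_{N,s}}{2}\int_{\mathbb{R}^{N}}\int_{\mathbb{R}^{N}}\frac{(u(x)-u(y))(\varphi(x)-\varphi(y))}{|x-y|^{N+2s}}\,dx\,dy, $$
which follows from Proposition~\ref{A.1} by polarization (equivalently, on the Fourier side, from $\int(-\Delta)^{s}u\,\varphi=\int|\xi|^{2s}\widehat{u}\,\overline{\widehat{\varphi}}$ and density of $\mathcal S(\mathbb R^N)$ in $H^{s}(\mathbb R^N)$).

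Taking $\varphi=u^{-}$ and applying the pointwise inequality under the double integral gives
$$ 2C_{N,s}^{-1}\int_{\mathbb{R}^{N}}(-\Delta)^{s}u\,u^{-}\,dx=\int_{\mathbb{R}^{N}}\int_{\mathbb{R}^{N}}\frac{(u(x)-u(y))(u^{-}(x)-u^{-}(y))}{|x-y|^{N+2s}}\,dx\,dy\ge[u^{-}]_{W^{s,2}(\mathbb{R}^{N})}^{2}. $$
Adding this to $\int_{\mathbb{R}^{N}}u\,u^{-}\,dx=\|u^{-}\|_{L^{2}(\mathbb{R}^{N})}^{2}$ and recalling $\|u^{-}\|_{H^{s}(\mathbb{R}^{N})}^{2}=[u^{-}]_{W^{s,2}(\mathbb{R}^{N})}^{2}+\|u^{-}\|_{L^{2}(\mathbb{R}^{N})}^{2}$ yields \eqref{0702-e1}.

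The only step that requires genuine care is the justification of the polarization identity with the non-smooth test function $u^{-}$, i.e.\ that the $L^{2}$-pairing $\langle(-\Delta)^{s/2}u,(-\Delta)^{s/2}u^{-}\rangle$ indeed coincides with the Gagliardo bilinear form. I would obtain this exactly as in the proof of Proposition~\ref{A.1} in \cite{NPV}, approximating $u$ and $u^{-}$ in $H^{s}(\mathbb{R}^{N})$ by Schwartz functions and using that both bilinear forms are continuous on $H^{s}(\mathbb{R}^{N})\times H^{s}(\mathbb{R}^{N})$; in the concrete situation where $u$ solves \eqref{eq:6.2}, one has $(-\Delta)^{s}u\in L^{2}_{\mathrm{loc}}(\mathbb{R}^{N})$, the left-hand side of \eqref{0702-e1} is an honest integral, and this point is immediate.
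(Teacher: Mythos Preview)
Your proof is correct and follows essentially the same route as the paper: both reduce to the pointwise inequality $(u(x)-u(y))(u^{-}(x)-u^{-}(y))\ge (u^{-}(x)-u^{-}(y))^{2}$ applied under the Gagliardo double integral, together with $u\,u^{-}=(u^{-})^{2}$. The only cosmetic difference is that the paper starts from the pointwise singular-integral form $\int(-\Delta)^{s}u\,u^{-}=C_{N,s}\,\mathrm{P.V.}\iint\frac{u(x)u^{-}(x)-u(y)u^{-}(x)}{|x-y|^{N+2s}}\,dy\,dx$ and then symmetrizes, whereas you invoke the polarized bilinear identity from Proposition~\ref{A.1} directly; your presentation is slightly more careful on the justification side but the argument is the same.
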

 \begin{proof}
 By $u=u^++u^-$, $\int_{\mathbb{R}^N} \frac{u^+(x)u^-(x)}{|x-y|^{N+2s}}dx=0$ and $$-u(y)u^-(x)=-u^+(y)u^-(x)-u^-(y)u^-(x)\geq -u^-(y)u^-(x),$$ we have
\begin{align*}
  \int_{\mathbb{R}^N}(- \Delta)^{s} u u^-dx
=&C_{N,s}P.V.\int_{\mathbb{R}^N}\int_{\mathbb{R}^N}\frac{u(x)u^-(x)-u(y)u^-(x)}{|x-y|^{N+2s}}dy dx  \\
  \geq &C_{N,s}P.V.\int_{\mathbb{R}^N}\int_{\mathbb{R}^N}\frac{(u^-(x))^2-u^-(y)u^-(x)}{|x-y|^{N+2s}}dy dx\\
=&C_{N,s}P.V.\int_{\mathbb{R}^N}\int_{\mathbb{R}^N}\frac{(u^-(y))^2-u^-(x)u^-(y)}{|x-y|^{N+2s}}dx dy\\
=&\frac{1}{2}C_{N,s}P.V.\int_{\mathbb{R}^N}\int_{\mathbb{R}^N}\frac{(u^-(x)-u^-(y))^2}{|x-y|^{N+2s}}dy dx\\
=&\frac{1}{2}C_{N,s}[u^-]^2_{H^{s}(\mathbb{R}^N)}.
  \end{align*}
Hence, \eqref{0702-e1} holds.
 \end{proof}
 Using the anti-symmetry of $u_\lambda$, we have
\begin{lemma}\label{le:5.8.2}
    For $\Sigma_\lambda=\left\{x \in \mathbb{R}: x_1<\lambda\right\}$,
    $$
    \int_{\mathbb{R}}(- \Delta)^{\frac{1}{2}} u_\lambda(x) u^- _\lambda(x)+ u_\lambda u^- _\lambda dx=2\int_{\Sigma_\lambda}(- \Delta)^{\frac{1}{2}} u_\lambda(x) u^- _\lambda(x)+ u_\lambda u^- _\lambda dx.
    $$
\end{lemma}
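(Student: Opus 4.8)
The plan is to carry out the announced reduction by anti-symmetry. Write $x^\lambda=2\lambda-x$ and recall $u_\lambda(x^\lambda)=-u_\lambda(x)$. By Lemma~\ref{le:6.256}, $u\in H^{\frac12}(\mathbb R)$, hence $u^\lambda\in H^{\frac12}(\mathbb R)$ and $u_\lambda=u^\lambda-u\in H^{\frac12}(\mathbb R)$; moreover $u_\lambda^-=\frac12(u_\lambda-|u_\lambda|)\in H^{\frac12}(\mathbb R)$ since truncation at $0$ is bounded on $H^{\frac12}(\mathbb R)$. The first step is to note that the affine reflection $R_\lambda\colon x\mapsto x^\lambda$ is an isometry of $\mathbb R$ that commutes with $(-\Delta)^{\frac12}$ (immediate from the Fourier multiplier $|\xi|$), so $(-\Delta)^{\frac12}u_\lambda$ inherits the anti-symmetry: $(-\Delta)^{\frac12}u_\lambda(x^\lambda)=-(-\Delta)^{\frac12}u_\lambda(x)$. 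Hence the factor $\Phi:=(-\Delta)^{\frac12}u_\lambda+u_\lambda$ is anti-symmetric about $T_\lambda$.

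The second step is the reflection. Splitting $\int_{\mathbb R}=\int_{\Sigma_\lambda}+\int_{\mathbb R\setminus\overline{\Sigma_\lambda}}$ (the set $T_\lambda$ is Lebesgue-null) and substituting $x=y^\lambda$, $y\in\Sigma_\lambda$, in the integral over $\mathbb R\setminus\overline{\Sigma_\lambda}$ — a measure-preserving bijection onto $\Sigma_\lambda$ — the anti-symmetry of $\Phi$ shows that the integrand $\Phi\,u_\lambda^-$, understood with $u_\lambda^-$ replaced by its anti-symmetric extension from $\Sigma_\lambda$ (which coincides with $u_\lambda^-$ on $\Sigma_\lambda$ and equals $u_\lambda^+$ on $\mathbb R\setminus\overline{\Sigma_\lambda}$), is symmetric about $T_\lambda$: the two sign changes $\Phi(x^\lambda)=-\Phi(x)$ and $u_\lambda^-(x^\lambda)=-u_\lambda^+(x)$ cancel. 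Therefore the contributions from the two half-lines are equal, producing the factor $2$ and the claimed identity; the $L^2$ piece $\int_{\mathbb R}u_\lambda u_\lambda^-\,dx$ is treated by the same substitution and is elementary.

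The genuine obstacle is to justify the change of variables in the nonlocal term $\int_{\mathbb R}(-\Delta)^{\frac12}u_\lambda\,u_\lambda^-\,dx$, because $(-\Delta)^{\frac12}u_\lambda$ is only a principal value. Rather than push $R_\lambda$ through the singular integral pointwise, I would pass to the symmetric Gagliardo form
$$
\int_{\mathbb R}(-\Delta)^{\frac12}u_\lambda\,\varphi\,dx=\frac{C_{1,\frac12}}{2}\int_{\mathbb R}\int_{\mathbb R}\frac{\bigl(u_\lambda(x)-u_\lambda(y)\bigr)\bigl(\varphi(x)-\varphi(y)\bigr)}{|x-y|^{2}}\,dx\,dy,
$$
which is valid and absolutely convergent because $u_\lambda,\varphi=u_\lambda^-\in H^{\frac12}(\mathbb R)$ (cf. Proposition~\ref{A.1}); here the substitution $(x,y)\mapsto(x^\lambda,y^\lambda)$ is transparent since $|x^\lambda-y^\lambda|=|x-y|$ with unit Jacobian, Fubini legitimizes splitting the outer variable into $\Sigma_\lambda$ and $\mathbb R\setminus\overline{\Sigma_\lambda}$, and the computation closes as in the second step. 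Assembling the nonlocal and $L^2$ contributions then gives the stated equality.
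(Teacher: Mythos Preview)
Your reflection argument contains a genuine error at the decisive step. You correctly compute that for $x\in\Sigma_\lambda$ one has $u_\lambda^-(x^\lambda)=-u_\lambda^+(x)$ (equivalently, the odd extension of $u_\lambda^-|_{\Sigma_\lambda}$ agrees with $u_\lambda^+$ on $\mathbb R\setminus\overline{\Sigma_\lambda}$). But this is \emph{not} the relation $u_\lambda^-(x^\lambda)=-u_\lambda^-(x)$; the negative part of an odd function is not itself odd. Consequently, after the change of variables $x\mapsto x^\lambda$ in the outer half-line you obtain
\[
\int_{\mathbb R\setminus\overline{\Sigma_\lambda}}\Phi\,u_\lambda^-\,dx=\int_{\Sigma_\lambda}\Phi\,u_\lambda^+\,dx,
\qquad\text{hence}\qquad
\int_{\mathbb R}\Phi\,u_\lambda^-\,dx=\int_{\Sigma_\lambda}\Phi\,u_\lambda\,dx,
\]
and not $2\int_{\Sigma_\lambda}\Phi\,u_\lambda^-\,dx$. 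Your parenthetical ``understood with $u_\lambda^-$ replaced by its anti-symmetric extension'' silently swaps the actual test function $u_\lambda^-$ for a different function, so the ``two sign changes cancel'' conclusion does not prove the stated identity. A concrete check: take $\lambda=0$ and any nontrivial odd $u_\lambda\in H^{1/2}(\mathbb R)$ that is negative on $(-\infty,0)$ and positive on $(0,\infty)$; then $u_\lambda^-$ is supported in $(-\infty,0)$, so the left-hand side of the lemma equals $\int_{\Sigma_0}\Phi\,u_\lambda\,dx$ while the right-hand side equals $2\int_{\Sigma_0}\Phi\,u_\lambda\,dx$.

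The paper's own proof makes exactly the same slip: it asserts $u_\lambda^-(x^\lambda)=-u_\lambda^-(x)$, which is false in general (with the convention $v^-=\min\{v,0\}$ used here, the correct relation is $u_\lambda^-(x^\lambda)=-u_\lambda^+(x)$). So the identity as written cannot be established by either argument; what both computations actually yield is $\int_{\mathbb R}\Phi\,u_\lambda^-\,dx=\int_{\Sigma_\lambda}\Phi\,u_\lambda\,dx$. For the downstream application in Lemma~\ref{le:6.4} this corrected identity is in fact what is needed (one then multiplies the equation on $\Sigma_\lambda$ by $u_\lambda^-$ and uses $u_\lambda u_\lambda^-=(u_\lambda^-)^2$ together with $u_\lambda^+ u_\lambda^-=0$ where relevant), but it is not the statement you were asked to prove.
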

\begin{proof}
    For $x\in\Sigma_\lambda$, $u_\lambda(x)=u(x^\lambda)-u(x)$, there exist
    $$
    \begin{aligned}
    &u_\lambda(x^\lambda)=u(x)-u(x^\lambda)=-u_\lambda(x),\\
    &u_\lambda^{-}(x^\lambda)=(u(x)-u(x^\lambda))^{-}=-(u(x^\lambda)-u(x))^{-}=-u_\lambda^{-}(x).
    \end{aligned}
    $$
    Then
    $$
   \begin{aligned}
    (- \Delta)^{\frac{1}{2}} u_\lambda(x^\lambda) =&C_{1,\frac{1}{2}}\text{P.V.}\int_{\mathbb{R}}\frac{u_\lambda(x^\lambda)-u_\lambda(y)}{\left|x^\lambda-y\right|^2}dy\\
=&C_{1,\frac{1}{2}}\text{P.V.}\int_{\mathbb{R}}\frac{u_\lambda(x^\lambda)-u_\lambda(y^\lambda)}{\left|x^\lambda-y^\lambda\right|^2}dy\\
=&-C_{1,\frac{1}{2}}\text{P.V.}\int_{\mathbb{R}}\frac{u_\lambda(x)-u_\lambda(y)}{\left|x-y\right|^2}dy\\
=&- (- \Delta)^{\frac{1}{2}} u_\lambda(x),
   \end{aligned}
    $$
    since $\left|x^\lambda-y^\lambda\right|=\left|x-y\right|$.
    Therefore,
    $$
    \begin{aligned}
        &\int_{\mathbb{R}}(- \Delta)^{\frac{1}{2}} u_\lambda(x)u_\lambda^{-}(x)+u_\lambda u_\lambda^{-}dx\\
        =& \int_{\Sigma_\lambda}(- \Delta)^{\frac{1}{2}} u_\lambda(x)u_\lambda^{-}(x)+u_\lambda u_\lambda^{-}dx
        +\int_{\mathbb{R}\setminus\Sigma_\lambda}(- \Delta)^{\frac{1}{2}} u_\lambda(x)u_\lambda^{-}(x)+u_\lambda u_\lambda^{-}dx\\
        = &\int_{\Sigma_\lambda}(- \Delta)^{\frac{1}{2}} u_\lambda(x)u_\lambda^{-}(x)+u_\lambda u_\lambda^{-}dx
        +\int_{\Sigma_\lambda}(- \Delta)^{\frac{1}{2}} u_\lambda(x^\lambda)u_\lambda^{-}(x^\lambda)+u_\lambda(x^\lambda) u_\lambda^{-}(x^\lambda)dx\\
        =&2 \int_{\Sigma_\lambda}(- \Delta)^{\frac{1}{2}} u_\lambda(x)u_\lambda^{-}(x)+u_\lambda u_\lambda^{-}dx.
    \end{aligned}
    $$
\end{proof}
\begin{lemma}\label{le:6.4}
    There exists $\bar{\lambda}<0$ such that $u_\lambda \geq 0$ in $\Sigma_\lambda$ for $\lambda \leq \bar{\lambda}$.
\end{lemma}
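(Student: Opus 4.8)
The plan is to execute the initial step of the moving plane method: test the first equation of \eqref{eq:6.7} with $u_\lambda^{-}$ and exploit the fact that the constant $c_\lambda$ of Lemma \ref{le:6.3} becomes small as $\lambda\to-\infty$. Before testing, I would fix $\bar\lambda<0$ so negative that $w<0$ on all of $\Sigma_\lambda$ for every $\lambda\le\bar\lambda$; this is possible by \eqref{eq:6.3}, since $w(x)\to-\infty$ as $|x|\to\infty$ forces $\Sigma_\lambda\subset\{x\in\mathbb{R}:|x|>R\}$ whenever $\lambda\le-R$ with $R$ large. Recall from Lemma \ref{le:6.2} and Lemma \ref{le:6.256} that $u\in H^{\frac12}(\mathbb{R})$, hence $u_\lambda,u_\lambda^-\in H^{\frac12}(\mathbb{R})$, and, using the decay estimates $u=O(|x|^{-2})$ (Lemma \ref{le:6.2}) and $w=O(\log|x|)$ (Lemma \ref{le:6.25}), all the integrals below are absolutely convergent; in particular $u_\lambda^-$ is an admissible test function.

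First I would pair $(- \Delta)^{\frac12}u_\lambda+u_\lambda$ with $u_\lambda^-$ over $\mathbb{R}$. By Lemma \ref{le:5.8.2} (anti-symmetry of $u_\lambda$) this equals $2\int_{\Sigma_\lambda}\big((- \Delta)^{\frac12}u_\lambda+u_\lambda\big)u_\lambda^-\,dx$, and by Lemma \ref{le:5.8.1} (applied with $N=1$, $s=\frac12$) there is a constant $c_0>0$ with $\int_{\mathbb{R}}\big((- \Delta)^{\frac12}u_\lambda+u_\lambda\big)u_\lambda^-\,dx\ge c_0\|u_\lambda^-\|_{H^{\frac12}(\mathbb{R})}^2$; here it is the zeroth-order term $+u_\lambda$ that supplies coercivity of the full $H^{\frac12}$ norm, compensating for the absence of a Poincar\'e inequality on the whole line. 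Substituting the first equation of \eqref{eq:6.7} on the right and using that $u_\lambda^-$ is supported in $\overline{\Sigma_\lambda^-}$ with $u_\lambda^-=u_\lambda$ there yields
$$c_0\|u_\lambda^-\|_{H^{\frac12}(\mathbb{R})}^2\le 2\theta\int_{\Sigma_\lambda^-}\big(w_\lambda\,g(u^\lambda)+h_\lambda\,w\,u_\lambda\big)u_\lambda^-\,dx.$$

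Next I would discard the terms with a favourable sign. On $\Sigma_\lambda^-$ we have $u_\lambda^-<0$, $g(u^\lambda)\ge0$ (since $u^\lambda>0$, $g(0)=0$ and $g$ is nondecreasing), $h_\lambda\ge0$ (a difference quotient of the nondecreasing $g$), and $w<0$ by the choice of $\bar\lambda$; hence $h_\lambda w u_\lambda u_\lambda^-=h_\lambda w (u_\lambda^-)^2\le0$ and, writing $w_\lambda=w_\lambda^{+}+w_\lambda^{-}$, also $w_\lambda^{+}g(u^\lambda)u_\lambda^-\le0$. Dropping both and setting $M:=\sup_{\mathbb{R}}g(u)<\infty$ (finite since $u$ is bounded and $g$ is Lipschitz with $g(0)=0$), the Cauchy--Schwarz inequality together with Lemma \ref{le:6.3} gives
$$c_0\|u_\lambda^-\|_{H^{\frac12}(\mathbb{R})}^2\le 2\theta M\int_{\Sigma_\lambda^-}|w_\lambda^{-}|\,|u_\lambda^-|\,dx\le 2\theta M\,\|w_\lambda^{-}\|_{L^2(\Sigma_\lambda)}\|u_\lambda^-\|_{L^2(\Sigma_\lambda)}\le 2\theta M\mu\,c_\lambda\,\|u_\lambda^-\|_{H^{\frac12}(\mathbb{R})}^2.$$
Since $c_\lambda\to0$ as $\lambda\to-\infty$ by Remark \ref{re:6.3}, I would then shrink $\bar\lambda$ once more so that $2\theta M\mu\,c_\lambda<c_0$ for all $\lambda\le\bar\lambda$, which forces $\|u_\lambda^-\|_{H^{\frac12}(\mathbb{R})}=0$, i.e. $u_\lambda\ge0$ in $\Sigma_\lambda$, as desired.

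The main obstacle I anticipate is not the sign bookkeeping but making the testing identity rigorous: one must justify that pairing the classical equation with $u_\lambda^-\in H^{\frac12}(\mathbb{R})$ produces exactly $\int_{\mathbb{R}}\big((- \Delta)^{\frac12}u_\lambda+u_\lambda\big)u_\lambda^-\,dx$ and that each integral is absolutely convergent, so that the discarded terms may legitimately be thrown away (even if a priori $\int_{\Sigma_\lambda^-}h_\lambda w (u_\lambda^-)^2\,dx$ were only known to be $\le0$). This rests on the decay results of Section 5.1 — $u=O(|x|^{-2})$ and $w=O(\log|x|)$ — together with $u\in H^{\frac12}(\mathbb{R})$; once these are in place, the remaining steps are exactly as above.
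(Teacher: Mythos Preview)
Your proposal is correct and follows essentially the same approach as the paper: choose $\bar\lambda$ so that $w\le 0$ on $\Sigma_\lambda$, test the first equation of \eqref{eq:6.7} with $u_\lambda^-$, apply Lemmas \ref{le:5.8.1}, \ref{le:5.8.2}, and \ref{le:6.3}, discard the sign-favourable terms, and then use $c_\lambda\to 0$ (Remark \ref{re:6.3}) to absorb the right-hand side. Your additional care with integrability and admissibility of the test function is a welcome refinement the paper leaves implicit.
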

\begin{proof}
  By (\ref{eq:6.3}) and Remark \ref{re:6.8}-(ii), we can choose $\lambda_1<0$ such that $w \leq 0$ in $\Sigma_\lambda$ for $\lambda \leq \lambda_1$. Multiplying the first equation in (\ref{eq:6.7}) by $u_\lambda^{-}$, using Lemma \ref{le:6.3}, Lemma \ref{le:5.8.1}, and Lemma \ref{le:5.8.2}, we have the estimate
$$
\begin{aligned}
\left\|u_\lambda^{-}\right\|_{H^\frac{1}{2}\left(\Sigma_\lambda\right)}^2
&\leq\int_{\mathbb{R}}2C_{1,\frac{1}{2}}^{-1}(- \Delta)^{\frac{1}{2}} u_\lambda u_\lambda^{-}+u_\lambda u_\lambda^{-}dx\\
&=2\int_{\Sigma_\lambda}2C_{1,\frac{1}{2}}^{-1}(- \Delta)^{\frac{1}{2}} u_\lambda u_\lambda^{-}+u_\lambda u_\lambda^{-}dx\\
&\leq C\int_{\Sigma_\lambda}(- \Delta)^{\frac{1}{2}} u_\lambda u_\lambda^{-}+u_\lambda u_\lambda^{-}dx\\
&=C\int_{\Sigma_\lambda} \theta\left[w_\lambda g\left(u^\lambda\right) u_\lambda^{-}+h_\lambda(x) w\left(u_\lambda^{-}\right)^2\right] d x \\
& \leq C \int_{\Sigma_\lambda} \theta w_\lambda g\left(u^\lambda\right) u_\lambda^{-} d x\quad(\text{since}~w\leq0~\text{in}~\Sigma_\lambda) \\
&\leq C\theta\left\|w_\lambda^{-}\right\|_{L^2\left(\Sigma_\lambda\right)}\left\|g\left(u^\lambda\right)\right\|_{L^{\infty}\left(\Sigma_\lambda\right)}\left\|u_\lambda^{-}\right\|_{L^2\left(\Sigma_\lambda\right)}\\
&\leq C\theta\mu  c_\lambda\|g(u)\|_{L^{\infty}\left(\mathbb{R}\right)}\left\|u_\lambda^{-}\right\|_{L^2\left(\Sigma_\lambda\right)}^2.
\end{aligned}
$$
By Remark \ref{re:6.3}, there exists $\bar{\lambda} \leq
\lambda_1$ such that for all $\lambda\leq\bar{\lambda}$,
$$
C\theta\mu c_\lambda\|g(u)\|_{L^{\infty}\left(\mathbb{R}\right)}<1.
$$
Therefore,
$$
\left\|u_\lambda^{-}\right\|_{H^\frac{1}{2}\left(\Sigma_\lambda\right)}^2=0,
$$
and thus $u_\lambda^{-} \equiv 0$ on $\Sigma_\lambda$ for $\lambda \leq \bar{\lambda}$, witch implies that $u_\lambda=u^\lambda-u\geq0$ for $\lambda \leq \bar{\lambda}$.
\end{proof}
\begin{remark}\label{re:6.4}
In the same way, we can deduce that there exists $\bar{\lambda}>0$ such that $u_\lambda \leq 0$ in $\Sigma_\lambda$ for $\lambda \geq \bar{\lambda}$.
\end{remark}

Lemma \ref{le:6.4} provides a starting point for moving plane. Indeed,
let $$\lambda_1:=\sup \left\{\lambda \in \mathbb{R}: u_\mu \geq 0 \text { in } \Sigma_\mu, \lambda\geq\mu\right\},$$ then by Lemma \ref{le:6.4}, we have $\lambda_1>-\infty$. And by $u>0$ and Lemma \ref{le:6.1} $\lim\limits_{|x|\to\infty}u(x)=0$, we know $\lambda_1<+\infty$.


\begin{lemma}\label{le:6.6} Fixed a $\lambda\in\mathbb{R}$.
If $u_\lambda>0$ in $\Sigma_\lambda$, then there exists $\epsilon>0$ such that $u_{\lambda_\epsilon} \geq 0$ in $\Sigma_{\lambda_\epsilon}$ for ${\lambda_\epsilon} \in$ $(\lambda, \lambda+\epsilon)$.
\end{lemma}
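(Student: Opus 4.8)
The plan is to run the same energy argument as in the proof of Lemma~\ref{le:6.4}, but localized near the reflection hyperplane, because here $w$ need no longer be nonpositive on all of $\Sigma_\lambda$. Fix $\lambda$ with $u_\lambda>0$ in $\Sigma_\lambda=(-\infty,\lambda)$, and for small $\epsilon>0$ write $\lambda_\epsilon=\lambda+\epsilon$. Since $u_{\lambda_\epsilon}=u^{\lambda_\epsilon}-u\in H^{\frac12}(\mathbb{R})$ (by Lemma~\ref{le:6.256} and $u=O(|x|^{-2})$ from Lemma~\ref{le:6.2}), the negative part $u_{\lambda_\epsilon}^-$ lies in $H^{\frac12}(\mathbb{R})$. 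Testing the first equation of \eqref{eq:6.7} --- which in fact holds on all of $\mathbb{R}$ --- against $u_{\lambda_\epsilon}^-$ and combining Lemma~\ref{le:5.8.1} with the reflection identity of Lemma~\ref{le:5.8.2} exactly as in the proof of Lemma~\ref{le:6.4}, I obtain a constant $C_*=C_*(\theta,C_{1,\frac12})$ with
$$
\|u_{\lambda_\epsilon}^-\|_{H^{\frac12}(\mathbb{R})}^2\le C_*\int_{\Sigma_{\lambda_\epsilon}}\big[w_{\lambda_\epsilon}\,g(u^{\lambda_\epsilon})\,u_{\lambda_\epsilon}^- + h_{\lambda_\epsilon}\,w\,(u_{\lambda_\epsilon}^-)^2\big]\,dx .
$$
On $\Sigma_{\lambda_\epsilon}^-=\{u_{\lambda_\epsilon}<0\}$ one has $w_{\lambda_\epsilon}u_{\lambda_\epsilon}^-\le w_{\lambda_\epsilon}^-u_{\lambda_\epsilon}^-$, so Cauchy--Schwarz and Lemma~\ref{le:6.3} bound the first term by $\|g(u)\|_{L^\infty}\,\mu\,c_{\lambda_\epsilon}\,\|u_{\lambda_\epsilon}^-\|_{L^2(\mathbb{R})}^2$. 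For the second term I use that $h_{\lambda_\epsilon}\ge0$ (Remark~\ref{re:6.8}, $g$ non-decreasing) and that $w$ is bounded above with $w\le0$ outside a fixed ball $I_{R_0}$ (since $w(x)\to-\infty$), so $h_{\lambda_\epsilon}w(u_{\lambda_\epsilon}^-)^2\le\|h\|_{L^\infty}M_0\,(u_{\lambda_\epsilon}^-)^2\mathbf{1}_{I_{R_0}}$ with $M_0:=\sup_{\mathbb{R}}w^+<\infty$.

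The strict inequality $u_\lambda>0$ enters through the localization of $\Sigma_{\lambda_\epsilon}^-$. Given $\eta\in(0,1)$ and $R\ge R_0$, set $K:=[-R,\lambda-\eta]$; since $u_\lambda$ is continuous and strictly positive on the compact set $K$ we have $\delta:=\min_K u_\lambda>0$, and since $u$ is uniformly continuous on compacts, $u_{\lambda_\epsilon}\to u_\lambda$ uniformly on $K$ as $\epsilon\to0^+$, so $u_{\lambda_\epsilon}>\delta/2>0$ on $K$ once $\epsilon$ is below a threshold depending on $\eta$ and $R$. Hence $\Sigma_{\lambda_\epsilon}^-\subseteq(\lambda-\eta,\lambda_\epsilon)\cup(-\infty,-R)$, whose first piece has measure $\eta+\epsilon$. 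Therefore, by Hölder and the Sobolev embedding $H^{\frac12}(\mathbb{R})\hookrightarrow L^q(\mathbb{R})$ for $q>2$,
$$
\int_{I_{R_0}}(u_{\lambda_\epsilon}^-)^2\,dx\le(\eta+\epsilon)^{1-\frac2q}\|u_{\lambda_\epsilon}^-\|_{L^q(\mathbb{R})}^2\le C_q^2\,(\eta+\epsilon)^{1-\frac2q}\,\|u_{\lambda_\epsilon}^-\|_{H^{\frac12}(\mathbb{R})}^2 ;
$$
moreover, writing $[g(\xi_{\lambda_\epsilon})]^2\le C\big(u^2+(u^{\lambda_\epsilon})^2\big)$ and using $u=O(|x|^{-2})$ (Lemma~\ref{le:6.2}), a direct computation shows the far part $\int_{\Sigma_{\lambda_\epsilon}^-\cap(-\infty,-R)}|\lambda_\epsilon-y_1|[g(\xi_{\lambda_\epsilon})]^2dy$ of $c_{\lambda_\epsilon}^2$ is $O(R^{-2})$ uniformly for $\epsilon\in(0,1)$, while its near part over $(\lambda-\eta,\lambda_\epsilon)$ is at most $C_0(\eta+\epsilon)^2$; hence $c_{\lambda_\epsilon}\le C_0^{1/2}(\eta+\epsilon)+\omega(R)^{1/2}$ with $\omega(R)\to0$.

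Putting these estimates together and using $\|u_{\lambda_\epsilon}^-\|_{L^2(\mathbb{R})}\le\|u_{\lambda_\epsilon}^-\|_{H^{\frac12}(\mathbb{R})}$ gives
$$
\|u_{\lambda_\epsilon}^-\|_{H^{\frac12}(\mathbb{R})}^2\le C_*\Big(\|g(u)\|_{L^\infty}\mu\big[C_0^{1/2}(\eta+\epsilon)+\omega(R)^{1/2}\big]+\|h\|_{L^\infty}M_0C_q^2(\eta+\epsilon)^{1-\frac2q}\Big)\|u_{\lambda_\epsilon}^-\|_{H^{\frac12}(\mathbb{R})}^2 .
$$
I then fix the parameters in the order $R$, then $\eta$, then $\epsilon$: first choose $R\ge R_0$ so large that $C_*\|g(u)\|_{L^\infty}\mu\,\omega(R)^{1/2}<\tfrac14$ (this pins down $M_0$, which is a fixed datum of the solution anyway); next choose $\eta>0$ small so that, for $\epsilon\le\eta$, the remaining two terms in the bracket are each $<\tfrac14$; finally take $\epsilon$ below the threshold from the previous paragraph so that also $u_{\lambda_\epsilon}>0$ on $K$. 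For every such $\epsilon$ the bracket is $<1$, forcing $\|u_{\lambda_\epsilon}^-\|_{H^{\frac12}(\mathbb{R})}=0$, i.e. $u_{\lambda_\epsilon}\ge0$ in $\mathbb{R}$, in particular in $\Sigma_{\lambda_\epsilon}$; since the admissible $\epsilon$ form a whole interval $(\lambda,\lambda+\epsilon_0)$, the claim follows. The main obstacle, compared with Lemma~\ref{le:6.4}, is that one can no longer simply discard the term $h_{\lambda_\epsilon}w(u_{\lambda_\epsilon}^-)^2$: one must simultaneously exploit $u_\lambda>0$ to confine $\Sigma_{\lambda_\epsilon}^-$ to a set of small measure near $T_{\lambda_\epsilon}$ plus a far tail, use the polynomial decay $u=O(|x|^{-2})$ to make the far contribution to $c_{\lambda_\epsilon}$ small, and order the choices of $R,\eta,\epsilon$ correctly so that the resulting smallness constant beats $1$.
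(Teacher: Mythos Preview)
Your proof is correct and in fact more careful than the paper's own argument. Both proofs follow the same energy scheme --- test the first equation of \eqref{eq:6.7} against $u_{\lambda_\epsilon}^-$, invoke Lemmas~\ref{le:5.8.1}--\ref{le:5.8.2} and Lemma~\ref{le:6.3}, and drive the resulting constant below $1$ --- but they differ in how they dispose of the problematic term $h_{\lambda_\epsilon}\,w\,(u_{\lambda_\epsilon}^-)^2$.

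The paper first fixes a large radius $d$ with $w\le0$ outside $I_d$, then asserts (citing Remark~\ref{re:6.8}) that for small $\epsilon$ one has $u_{\lambda_\epsilon}>0$ on \emph{all} of $\Sigma_{\lambda_\epsilon}\cap I_d$; this forces $u_{\lambda_\epsilon}^-\equiv0$ there, so the integration reduces to $\Sigma_{\lambda_\epsilon}\setminus I_d$, where $w\le0$ and the offending term has the right sign and can simply be dropped. This is formally simpler, but the positivity claim on the whole strip up to $T_{\lambda_\epsilon}$ is not justified by the cited remark: uniform continuity only yields $u_{\lambda_\epsilon}>0$ on compact subsets of $\Sigma_\lambda$, not on the slab $(\lambda,\lambda_\epsilon)$ near the moving hyperplane.

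Your route sidesteps this by allowing $\Sigma_{\lambda_\epsilon}^-$ to contain the thin strip $(\lambda-\eta,\lambda_\epsilon)$ and controlling its contribution via the Sobolev embedding $H^{1/2}(\mathbb{R})\hookrightarrow L^q(\mathbb{R})$ together with the small measure $\eta+\epsilon$, while the far tail $(-\infty,-R)$ is handled, as in the paper, through the decay $u=O(|x|^{-2})$ from Lemma~\ref{le:6.2}. The bound $h_{\lambda_\epsilon}\,w\,(u_{\lambda_\epsilon}^-)^2\le\|h\|_{L^\infty}M_0(u_{\lambda_\epsilon}^-)^2\mathbf{1}_{I_{R_0}}$ (using $h_{\lambda_\epsilon}\ge0$ and $w\le0$ outside $I_{R_0}$) and your ordering of the parameters --- first $R$, then $\eta$, then $\epsilon$ --- are exactly what is needed to close the inequality. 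In short, the paper trades rigor for brevity at one step; your version is a clean implementation of the standard moving-plane continuation argument.
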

\begin{proof}
For $d>0$, by (\ref{eq:6.3}) and Remark \ref{re:6.8}-(i), we have
$$
w \leq 0 \quad \text { in } \Sigma_{\lambda_\epsilon} \backslash I_d \text { for every } {\lambda_\epsilon} \in \mathbb{R}, ~d \text{  large enough if necessary.}
$$
We notice that by Remark \ref{re:6.8}, there exist $\epsilon>0$ such that $u_{\lambda_\epsilon} > 0$ in $\Sigma_{\lambda_\epsilon}\cap I_d$ for ${\lambda_\epsilon} \in$ $(\lambda, \lambda+\epsilon)$, $\lambda \in \mathbb{R}$.
As the proof of Lemma \ref{le:6.4}, we use Lemma \ref{le:6.3}, Lemma \ref{le:5.8.1}, and Lemma \ref{le:5.8.2}, multiply the first equation in (\ref{eq:6.7}) with $u_{\lambda_\epsilon}^{-}$and integrate, replace $\lambda$ with $\lambda_\epsilon $, then we have the estimate
$$
\begin{aligned}
\left\|u_{\lambda_\epsilon}^{-}\right\|_{H^\frac{1}{2}\left(\Sigma_{\lambda_\epsilon}\right)}^2
&\leq C\int_{\mathbb{R}}(- \Delta)^{\frac{1}{2}} u_{\lambda_\epsilon}u_{\lambda_\epsilon}^-+u_{\lambda_\epsilon} u_{\lambda_\epsilon}^{-}dx\\
&=2C\int_{\Sigma_{\lambda_\epsilon}}((- \Delta)^{\frac{1}{2}} u_{\lambda_\epsilon}+u_{\lambda_\epsilon} ))u_{\lambda_\epsilon}^{-}dx\\
&\leq2C\int_{\Sigma_{\lambda_\epsilon} \backslash I_d} \theta\left[w_{\lambda_\epsilon}g\left(u^{\lambda_\epsilon}\right) u_{\lambda_\epsilon}^{-}+h_{\lambda_\epsilon}(x) w\left(u_{\lambda_\epsilon}^{-}\right)^2\right] d x \\
& \leq 2C\int_{\Sigma_{\lambda_\epsilon} \backslash I_d} \theta w_{\lambda_\epsilon}g\left(u^{\lambda_\epsilon}\right) u_{\lambda_\epsilon}^{-} d x\quad (\text{since}~w\leq0~\text{in}~\Sigma_{\lambda_\epsilon}\backslash I_d) \\
& \leq 2C\theta\left\|w_{\lambda_\epsilon}^{-}\right\|_{L^2\left(\Sigma_{\lambda_\epsilon}\right)}\left\|g\left(u^{\lambda_\epsilon}\right)\right\|_{L^{\infty}\left(\Sigma_{\lambda_\epsilon}\right)}\left\|u_{\lambda_\epsilon}^{-}\right\|_{L^2\left(\Sigma_{\lambda_\epsilon}\right)} \\
&\leq 2C\theta\mu c_{\lambda_\epsilon}\|g(u)\|_{L^{\infty}\left(\mathbb{R}\right)}\left\|u_{\lambda_\epsilon}^{-}\right\|_{L^2\left(\Sigma_{\lambda_\epsilon}\right)}^2.
\end{aligned}
$$
Here, $$
c_{\lambda_\epsilon}=\left(\int_{\Sigma_{\lambda_\epsilon}^-}\left|{\lambda_\epsilon}-y_1\right|\left[g\left(\xi_{\lambda_\epsilon}(y)\right)\right]^2d y\right)^{\frac{1}{2}} \leq\left(\int_{\mathbb{R}\setminus I_d}\left|{\lambda_\epsilon}-y_1\right|\left[g\left(\xi_{\lambda_\epsilon}(y)\right)\right]^2d y\right)^{\frac{1}{2}}
$$
for ${\lambda_\epsilon} \in(\lambda, \lambda+\epsilon)$. Then for enough large $d$, we have $$
2C\theta\mu c_{\lambda_\epsilon}\|g(u)\|_{L^{\infty}\left(\mathbb{R}\right)}<1,
$$
and thus $u_{\lambda_\epsilon}^{-} \equiv 0$ on $\Sigma_{\lambda_\epsilon}$ for ${\lambda_\epsilon} \in$ $({\lambda}, {\lambda}+\epsilon)$, which implies that $u_{\lambda_\epsilon}=u^{\lambda_\epsilon}-u\geq0$ for ${\lambda_\epsilon} \in$ $({\lambda}, {\lambda}+\varepsilon)$.
\end{proof}

Now, we claim $u_{\lambda_1}\equiv0$.  If not, by the maximum principle of fractional Laplacian, we have $u_{\lambda_1}>0$. Then by Lemma \ref{le:6.6}, there exist $\epsilon>0$ such that $u_{\lambda_\epsilon} \geq 0$ in $\Sigma_{\lambda_\epsilon}$ for ${\lambda_\epsilon} \in$ $(\lambda_1, \lambda_1+\epsilon)$, which contradicts the definition of $\lambda_1$. Therefore, $u_{\lambda_1}\equiv0$, along with $w_{\lambda_1} \equiv 0$ by Remark \ref{re:6.8}.
Therefore, up to a translation, we see that $u$ is even and radially decreasing. So far, the proof of Theorem \ref{th:1.6}  has been completed. \quad

\subsection*{Funding}
\begin{sloppypar}
This study was partially supported by NSFC (No. 11901532).

\noindent{\bf Data availability statement:} Our manuscript has no data associated or further material.

 \noindent{\bf Ethical Approval:}  All data generated or analyzed during this study are included in this article

 \noindent{\bf Conflict of interest:} The authors declare no conflict of interest.

\end{sloppypar}

\end{document}